\newtheorem{theorem}{Theorem}[section]
\newtheorem{proposition}[theorem]{Proposition}
\newtheorem{lemma}[theorem]{Lemma}
\newtheorem{corollary}[theorem]{Corollary}
\newtheorem{question}[theorem]{Question}
\theoremstyle{definition}
\newtheorem{definition}[theorem]{Definition}
\newtheorem{condition}[theorem]{Condition}
\newtheorem{remark}[theorem]{Remark}
\title{Objects of a Phantom on a Rational Surface}
\author{Amal Mattoo}
\begin{document}
	\maketitle 
	
	\begin{abstract}
		Johannes Krah showed that the blowup of $\mathbf{P}^{2}$ in $10$ general points admits a phantom subcategory. We construct three types of objects in such a phantom: a strong generator, projections of skyscraper sheaves, and a family of objects with two nonzero cohomology sheaves. We study the deformation theory of these objects to show that the phantom contains rich geometry, such as encoding the blowdown map to $\mathbf{P}^{2}$. We also show that there exists a co-connective dg-algebra whose derived category is a phantom. 
	\end{abstract}
	
	\tableofcontents
	
	\section{Introduction}
		A \emph{phantom} on a smooth projective variety $X$ is a nonzero admissible subcategory $\mathcal{A}\hookrightarrow D^{b}_{\text{Coh}}(X)$ such that $K_{0}(\mathcal{A})=0$. This condition implies that all additive invariants of $\mathcal{A}$ vanish. 
		
		Gorchinskiy-Orlov \cite{Gorchinskiy_2013} constructed the first example of a phantom on the product of two general type surfaces, and further examples were found on determinantal Barlow surfaces in \cite{B_hning_2015}. Recently Krah \cite{Krah2023} discovered a phantom on a much simpler variety, a rational surface obtained by blowing up $\mathbf{P}^{2}$ in $10$ general points, exhibited as the orthogonal complement of a non-full maximal length exceptional collection.  
		
		As \cite{B_hning_2015} noted, it would be interesting to ``lay hands'' on a phantom by constructing explicit objects. Many techniques used in the study of derived categories, such as Hodge theory and stability conditions, are inapplicable without nontrivial additive invariants, so studying explicit objects appears to be one of the only available tools. 
		
		In this paper we will construct explicit objects in a slight modification of Krah's phantom on a rational surface. We will study their homological properties and deformations, with a particular view to seeing how much geometry of $X$ can be recovered from $\mathcal{A}$.
		
		Our main tool will be the projection functor $i^{*}$, which is the left adjoint to the inclusion $i:\mathcal{A}\hookrightarrow D^{b}_{\text{Coh}}(X)$. A key technical step is to find a manageable way to compute with this projection. In particular, Proposition \ref{projection-complex} gives an explicit complex for the projection of an object, and Corollary \ref{spectral-sequence} gives a spectral sequence computing the Homs between projections. 
		
		These constructions have not appeared in the literature, but they are quite similar to the normal Hochschild complex and normal Hochschild spectral sequence of Kuznetsov \cite{Kuznetsov2012}. He used a dg-enhancement and the bar resolution, but we will take a more down-to-earth approach using injective resolutions. Though messier, our method is perhaps more transparent as we will inductively apply exact triangles for projecting past exceptional objects.  
		
		In Section \ref{section-generator}, we will construct a strong generator $\mathcal{Q}$ for the phantom. While this construction will not be too explicit, we will show that $\text{Hom}^{*}(\mathcal{Q},\mathcal{Q})$ is concentrated in non-negative degree, which will help answer affirmatively a question of Ben Antieau:
		\begin{theorem}[Theorem \ref{Antieau}]
			There exists a co-connective dg-algebra whose derived category is a phantom.
		\end{theorem}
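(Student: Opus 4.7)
The plan is to reduce the theorem to a standard Morita-type theorem for dg-enhanced triangulated categories, using the strong generator $\mathcal{Q}$ from Section \ref{section-generator} as input.

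The first step would be to equip $\mathcal{A}$ with the natural dg-enhancement inherited from a dg-enhancement of $D^{b}_{\text{Coh}}(X)$ (for instance via injective resolutions, which match the computational framework used elsewhere in the paper). Within this enhancement, I would form the dg-algebra $A := R\text{End}(\mathcal{Q})$. By the key property of $\mathcal{Q}$ established in Section \ref{section-generator}, $H^{i}(A) = \text{Hom}^{i}(\mathcal{Q}, \mathcal{Q}) = 0$ for $i < 0$, so $A$ is co-connective (and may be replaced by a quasi-isomorphic strict model concentrated in non-negative degrees if one wants co-connectivity on the nose rather than up to quasi-isomorphism).

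Next, I would invoke Keller's theorem (or, equivalently, a version of Bondal--Van den Bergh): if $\mathcal{Q}$ is a compact generator of an algebraic triangulated category, then $R\text{Hom}(\mathcal{Q}, -)$ induces an equivalence between its perfect objects and $\text{Perf}(R\text{End}(\mathcal{Q}))$. Applied to $\mathcal{A}$---viewed either directly, or after passing to its ind-completion inside $D_{\text{qc}}(X)$ to fit the usual formulations---this produces a triangulated equivalence $\mathcal{A} \simeq \text{Perf}(A)$. Verifying the hypotheses is routine: classical generation of $\mathcal{A}$ by $\mathcal{Q}$ comes directly from its being a strong generator, and compactness of $\mathcal{Q}$ is automatic from the chain of inclusions $\mathcal{A} \hookrightarrow D^{b}_{\text{Coh}}(X) \hookrightarrow D_{\text{qc}}(X)$.

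Since triangulated equivalences preserve $K_{0}$, this gives $K_{0}(\text{Perf}(A)) = K_{0}(\mathcal{A}) = 0$, and $A$ is visibly nonzero, so $\text{Perf}(A)$ realizes the phantom $\mathcal{A}$. The real content of the theorem is thus concentrated entirely in the construction of $\mathcal{Q}$ together with the bound $\text{Hom}^{<0}(\mathcal{Q},\mathcal{Q}) = 0$, which is the main obstacle and is handled in Section \ref{section-generator}; given that input, the present deduction is a formal consequence of dg-Morita theory.
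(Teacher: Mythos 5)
Your proposal is correct and follows essentially the same route as the paper's own proof: take the endomorphism dg-algebra of the strong generator $\mathcal{Q}$, observe it is co-connective from the vanishing of negative self-Homs, and conclude by dg-Morita theory that its derived category realizes $\mathcal{A}$. The paper states this more tersely, but your fleshed-out version (invoking Keller/Bondal--Van den Bergh and noting compactness) is the same argument with the standard hypotheses checked explicitly.
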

		
		Then in Section \ref{section-skyscraper}, we will study projections of skyscraper sheaves $i^{*}\kappa(x)$. We will compute the $\text{Hom}$ spaces between them (Lemma \ref{Hom(k(x),k(x))} and Corollary \ref{different-points}), which are the same for all $x\in X$, as well as composition maps of $\text{Hom}^{1}(i^{*}\kappa(x),i^{*}\kappa(x))$ (Lemma \ref{general-compositions}), which depend on $x$. It turns out $\text{Hom}^{*}(i^{*}\kappa(x),i^{*}\kappa(x))$ vanishes for $*<0$ and is $\mathbf{C}$ for $*=0$. This means the map $p:x\mapsto i^{*}\kappa(x)$ from $X$ to the moduli stack of objects of $\mathcal{A}$ lands in the locus of simple universally gluable objects $s\mathcal{M}(\mathcal{A})$, which is an algebraic stack and a $\mathbf{G}_{m}$-gerbe. 
		
		We will use the above computations to study the deformation theory of $i^{*}\kappa(x)$ and understand the image of $p$ in the reduced coarse moduli space $sM(\mathcal{A})$ of $s\mathcal{M}(\mathcal{A})$. Remarkably, for generic $x\in X$, we will see that the only unobstructed deformations are those arising geometrically from deforming the point $x$ (Lemma \ref{obstructed}). This will imply:
		\begin{proposition}[\ref{birational}]
			The map $p:X\to sM(\mathcal{A})$ is a birational map from $X$ to an irreducible component of $sM(\mathcal{A})$.
		\end{proposition}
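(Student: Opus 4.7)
The plan is to derive birationality from three inputs already assembled (or soon to be assembled) in the paper: first, an $X$-family of simple universally gluable objects of $\mathcal{A}$ which produces $p$ as a morphism of algebraic spaces by taking the universal skyscraper $\mathcal{O}_{\Delta}$ on $X\times X$ and applying a relative version of $i^{*}$ fiberwise; second, the Hom computations of Lemma \ref{Hom(k(x),k(x))} and Corollary \ref{different-points}; and third, the obstruction analysis of Lemma \ref{obstructed}. The strategy is to show that $p$ is injective on closed points, that its differential is an isomorphism at a generic point, and that its image lies in an irreducible component of the same dimension as $X$.

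For injectivity, I would apply Corollary \ref{different-points} to $x\ne y\in X$: this computes $\text{Hom}^{*}_{\mathcal{A}}(i^{*}\kappa(x), i^{*}\kappa(y))$ and in particular rules out an isomorphism in degree zero, so $p(x)\ne p(y)$. For the differential, the Kodaira–Spencer map $dp_{x}:T_{x}X\to T_{p(x)}sM(\mathcal{A})\subseteq \text{Ext}^{1}_{\mathcal{A}}(i^{*}\kappa(x), i^{*}\kappa(x))$ lands in the subspace of first-order deformations coming from actually deforming the point $x$ in $X$, which is two-dimensional. Lemma \ref{obstructed} identifies these as the \emph{only} unobstructed first-order deformations for generic $x$. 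Since no tangent vector to a reduced irreducible component can be an obstructed direction, this forces the component of $sM(\mathcal{A})$ through $p(x)$ to have dimension at most two, equal to $\dim X$, and it identifies $dp_{x}$ with an isomorphism onto the tangent space of that component.

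To finish, let $Z\subseteq sM(\mathcal{A})$ be the closure of $p(X)$; it is irreducible because $X$ is. By the injectivity and tangent-space analysis above, at a generic point $x$ the morphism $p:X\to Z$ is unramified and set-theoretically injective, between irreducible varieties of the same dimension, so it is birational; and since $Z$ has dimension equal to $\dim X$ and shares a tangent space with a component of $sM(\mathcal{A})$ at $p(x)$, it must itself be an irreducible component.

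The main obstacle I anticipate is carefully translating the deformation-theoretic content of Lemma \ref{obstructed} into scheme-theoretic statements about $sM(\mathcal{A})$: specifically, showing that obstructed first-order deformations cannot extend along any reduced irreducible component through $p(x)$, that the passage from the gerbe $s\mathcal{M}(\mathcal{A})$ to the reduced coarse space $sM(\mathcal{A})$ preserves the relevant tangent identifications, and that for generic $x$ only one component of $sM(\mathcal{A})$ passes through $p(x)$ (so that the map is unambiguously onto \emph{one} component rather than meeting several).
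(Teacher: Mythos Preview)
Your approach is correct and follows the same line as the paper's: injectivity from Corollary \ref{different-points} plus the obstruction analysis of Lemma \ref{obstructed}. The paper's argument is shorter only because it exploits the full statement of Lemma \ref{obstructed} rather than re-deriving its consequences. Recall from the paragraph preceding that lemma that the hull $R$ of the deformation functor at $i^{*}\kappa(x)$ is identified with the completed local ring $\widehat{\mathcal{O}}_{sM(\mathcal{A}),p(x)}$; Lemma \ref{obstructed} then asserts that $R_{\mathrm{red}}\cong\mathbf{C}[[y_{0},y_{1}]]$, with $y_{0},y_{1}$ dual to the geometric deformations $s_{1},s_{2}$. This single statement already says that the \emph{reduced} coarse space is formally smooth of dimension $2$ at $p(x)$ for generic $x$, and that $p$ induces an isomorphism on completed local rings there. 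Together with injectivity this makes $p|_{U}$ an open immersion, and its image is automatically open and dense in a (unique) irreducible component. Your three ``main obstacles'' evaporate once you read Lemma \ref{obstructed} this way: the obstructed directions are precisely those killed when passing to $R_{\mathrm{red}}$; the gerbe-to-coarse-space translation is the hull identification $R\cong\widehat{\mathcal{O}}_{sM(\mathcal{A}),p(x)}$; and uniqueness of the component through $p(x)$ follows because $\mathbf{C}[[y_{0},y_{1}]]$ is a domain.
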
  
	
		Studying the \emph{special locus} of $x\in X$ that could have additional deformations, we see that it reflects the geometry of the blowdown map $\pi\colon X\to\mathbf{P}^{2}$ (Lemma \ref{linear-systems}), which is not intrinsic to $X$ but is used in the construction of $\mathcal{A}$. Thus, we can show:
		\begin{theorem}(Theorem \ref{blowdown})
			The data of $i\colon \mathcal{A}\hookrightarrow D^{b}_{\textnormal{Coh}}(X)$ uniquely characterizes the blowdown map $\pi\colon X\to\mathbf{P}^{2}$.
		\end{theorem}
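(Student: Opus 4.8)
The plan is to reconstruct $\pi$ --- which is not determined by $X$ alone but enters the construction of $\mathcal A$ --- from the subcategory $\mathcal A\hookrightarrow D^{b}_{\mathrm{Coh}}(X)$, by isolating a special closed subset of $X$ via the deformation theory of the projected skyscrapers and reading the blowdown off its geometry. First I would observe that all the relevant structure is canonical given $i$: the projection $i^{*}$ is the left adjoint of $i$, hence determined by $\mathcal A\hookrightarrow D^{b}_{\mathrm{Coh}}(X)$; the objects $i^{*}\kappa(x)$ and the morphism $p\colon X\to sM(\mathcal A)$ are then determined, and by Proposition \ref{birational} the map $p$ is a birational map onto a distinguished irreducible component of $sM(\mathcal A)$. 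Consequently, any subset of $X$ that corresponds under $p$ to a locus of $sM(\mathcal A)$ defined purely from the moduli data is itself determined by $i$ alone.

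Next I would pin down the special locus intrinsically. For generic $x\in X$ the formal deformation space of $i^{*}\kappa(x)$ has tangent space $\mathrm{Hom}^{1}\bigl(i^{*}\kappa(x),i^{*}\kappa(x)\bigr)$ with obstructions in $\mathrm{Hom}^{2}$, and by Lemma \ref{obstructed} its smooth directions span exactly the two-dimensional subspace coming from moving $x$ in $X$. Let $Z\subsetneq X$ be the locus where this behaviour fails --- where $\dim\mathrm{Hom}^{1}(i^{*}\kappa(x),i^{*}\kappa(x))$ jumps, or where the dimension of the unobstructed sublocus of the deformation space of $i^{*}\kappa(x)$ jumps. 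Being phrased entirely in terms of $\mathcal A$, the coarse space $sM(\mathcal A)$, and formal neighbourhoods of points of the universal object, $Z$ is determined by $i\colon\mathcal A\hookrightarrow D^{b}_{\mathrm{Coh}}(X)$.

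Finally I would recover $\pi$ from $Z$. By Lemma \ref{linear-systems} the one-dimensional components of $Z$ are the exceptional divisors $E_{1},\dots,E_{10}$ of $\pi$ together with the strict transforms of the special plane curves through the blown-up points. From this incidence data one singles out $\{E_{1},\dots,E_{10}\}$ --- for instance as the unique maximal collection of pairwise disjoint $(-1)$-curves occurring among the components of $Z$, or via their intersection pattern with the remaining components --- after which $\pi$ is the contraction $X\to Y$ of $E_{1}\cup\dots\cup E_{10}$; since $Y$ is a smooth rational surface with $K_{Y}^{2}=9$ it is isomorphic to $\mathbf{P}^{2}$. Equivalently, $\pi$ is the morphism defined by the base-point-free class $H=\tfrac{1}{3}\bigl(-K_{X}+\textstyle\sum_{i}E_{i}\bigr)$, which has $h^{0}(X,H)=3$ and identifies the target canonically with $\mathbf{P}\bigl(H^{0}(X,H)^{\vee}\bigr)\cong\mathbf{P}^{2}$. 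Either way $\pi$ is determined up to a choice of coordinates on $\mathbf{P}^{2}$, i.e. up to $\mathrm{PGL}_{3}$ acting on the target, which is precisely what it means for $i$ to characterize the blowdown map.

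The crux is the step through Lemma \ref{linear-systems}: one must prove the deformation-theoretic computation identifying $Z$ with the asserted curves, and then check that $Z$ genuinely breaks the $W(E_{10})$-symmetry of the $(-1)$-curves of $X$ --- so that two different blowdowns $X\to\mathbf{P}^{2}$ produce different special loci --- and that the combinatorics of $Z$ singles out the contracted curves unambiguously. If Lemma \ref{linear-systems} instead exhibits the linear system $|H|$ (or a sub-pencil of it) directly among the components of $Z$, this uniqueness is immediate and the whole difficulty is concentrated in proving that lemma.
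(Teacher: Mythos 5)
Your overall strategy matches the paper's: the special locus (your $Z$) is intrinsic to $i\colon\mathcal A\hookrightarrow D^b_{\mathrm{Coh}}(X)$ via deformation theory of the $i^*\kappa(x)$, and one reads the blowdown off its one-dimensional part. However, there is a concrete error in the middle step. You assert that the one-dimensional components of $Z$ are the exceptional divisors $E_1,\dots,E_{10}$, but they are not. Lemma \ref{linear-systems} lists the special locus as a union of base loci, and Proposition \ref{elliptic-curves} (which you did not invoke) shows that the \emph{only} irreducible curves in $Z$ lie in the classes $-K_X+E_i=3H-\sum_{j\neq i}E_j$, the strict transforms of plane cubics through nine of the ten points. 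These have self-intersection $0$, not $-1$, so your proposed criterion of singling out $\{E_1,\dots,E_{10}\}$ as the unique maximal collection of pairwise disjoint $(-1)$-curves among the components of $Z$ fails on the nose: there are no $(-1)$-curves in $Z$ at all.

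The fix is simple and is precisely what the paper does: since $K_X$ is intrinsic to $X$, the ten curve classes $C_i=-K_X+E_i$ recovered from $Z$ yield $E_i=C_i+K_X$, and these determine $\pi$. Your alternative route via $H=\tfrac13(-K_X+\sum_i E_i)$ is fine once the $E_i$'s are in hand. Your worry about breaking the $W(E_{10})$-symmetry is a legitimate one — it is exactly what Proposition \ref{elliptic-curves} addresses by pinning down exactly which ten classes occur — so your instinct that "the whole difficulty is concentrated in proving that lemma" is right, but you should be citing Proposition \ref{elliptic-curves}, not just Lemma \ref{linear-systems}, and you must correct the identification of the components of $Z$.
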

	
		In Section \ref{section-two-cohomology-sheaves}, we will construct a family of relatively simple objects $\mathcal{P}$ that we can describe very explicitly. Such $\mathcal{P}$ have exactly two non-zero cohomology sheaves (Lemma \ref{two-cohomology-sheaves}), which is the minimal possible number, as a phantom category cannot contain a sheaf. And computing $\text{Hom}^{*}(\mathcal{P},\mathcal{P})$ (Proposition \ref{Hom(P,P)}) shows that they are also simple and universally gluable. 
		
		We will construct $\mathcal{P}$ as $i^{*}(\iota_{*}\mathcal{L})$ for a specially chosen curve $\iota:C\to X$ and line bundle $\mathcal{L}$ on $C$. It will take some work to show that the desired curve $C$ exists (Corollary \ref{existence}), and the proof will engage deeply with the geometry of $X$. 
		
		Furthermore, we will show that deformations of $\mathcal{P}$ correspond precisely to deformations of $C$ and $\mathcal{L}$:
		\begin{proposition}[Proposition \ref{Hom(P,P)}, Proposition \ref{normal-bundle}]
			There is an exact sequence 
			$$0\to H^{1}(C,\mathcal{O}_{C})\to\textnormal{Hom}_{X}^{1}(\mathcal{P},\mathcal{P})\to H^{0}(C,\mathcal{N}_{X/C})\to 0$$
			where $\mathcal{N}_{X/C}$ is the normal bundle.
		\end{proposition}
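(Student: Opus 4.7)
The plan is to reduce $\text{Hom}_{X}^{1}(\mathcal{P},\mathcal{P})$ to an Ext computation on $X$ for $\iota_{*}\mathcal{L}$ itself, and then to unpack that via the local-to-global spectral sequence on the curve $C$. Using the adjunction between $i^{*}$ and $i$, we have
\[
\text{Hom}_{X}^{*}(\mathcal{P},\mathcal{P}) = \text{Hom}_{X}^{*}(\iota_{*}\mathcal{L},\mathcal{P}).
\]
From the projection triangle $F' \to \iota_{*}\mathcal{L} \to \mathcal{P}$ with $F'$ in the exceptional subcategory, the long exact sequences obtained by applying $\text{Hom}_{X}(\iota_{*}\mathcal{L},-)$ and $\text{Hom}_{X}(-,F')$ to this triangle together reduce the claim to the vanishing $\text{Ext}_{X}^{1}(F',F') = \text{Ext}_{X}^{2}(F',F') = 0$; here we use semi-orthogonality $\text{Hom}_{X}^{*}(\mathcal{P},F') = 0$ to convert $\text{Hom}_{X}^{*}(\iota_{*}\mathcal{L},F')$ into $\text{Hom}_{X}^{*}(F',F')$. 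I would establish this vanishing using the explicit projection complex of Proposition \ref{projection-complex} applied to $\iota_{*}\mathcal{L}$, or directly from the spectral sequence of Corollary \ref{spectral-sequence}, exploiting the careful choice of $(C,\mathcal{L})$ coming from Corollary \ref{existence}.

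Once that is in hand, $\text{Hom}_{X}^{1}(\mathcal{P},\mathcal{P}) \cong \text{Ext}_{X}^{1}(\iota_{*}\mathcal{L},\iota_{*}\mathcal{L})$, and the latter is computed by the local-to-global spectral sequence
\[
E_{2}^{p,q} = H^{p}\bigl(C, \mathcal{E}xt_{X}^{q}(\iota_{*}\mathcal{L},\iota_{*}\mathcal{L})\bigr) \Rightarrow \text{Ext}_{X}^{p+q}(\iota_{*}\mathcal{L},\iota_{*}\mathcal{L}).
\]
Since $C$ is a smooth Cartier divisor in the smooth surface $X$, a standard Koszul calculation gives $\mathcal{E}xt_{X}^{0}(\iota_{*}\mathcal{L},\iota_{*}\mathcal{L}) = \iota_{*}\mathcal{O}_{C}$, $\mathcal{E}xt_{X}^{1}(\iota_{*}\mathcal{L},\iota_{*}\mathcal{L}) = \iota_{*}\mathcal{N}_{X/C}$, and higher $\mathcal{E}xt$ vanishing. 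The only contributions in total degree $1$ are then $E_{2}^{1,0} = H^{1}(C,\mathcal{O}_{C})$ and $E_{2}^{0,1} = H^{0}(C,\mathcal{N}_{X/C})$, and the potential differential $d_{2}\colon E_{2}^{0,1} \to E_{2}^{2,0}$ vanishes because $H^{2}(C,\mathcal{O}_{C}) = 0$ on a curve. This yields the claimed short exact sequence, with $H^{1}(C,\mathcal{O}_{C})$ interpretable as deformations of $\mathcal{L}$ with $C$ fixed and $H^{0}(C,\mathcal{N}_{X/C})$ as deformations of $C$ in $X$.

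The main obstacle is the Ext vanishing for $F'$ in the first step. Because $F'$ is generally a nontrivial complex built from the exceptional objects rather than an exceptional object itself, $\text{Ext}_{X}^{i}(F',F')$ need not vanish for abstract reasons; rather its vanishing will reflect the special position of $(C,\mathcal{L})$ relative to the exceptional collection defining the phantom. Verifying it cleanly is where the geometric work of Section \ref{section-two-cohomology-sheaves} enters, and it is the only step requiring more than routine homological algebra once the machinery of Proposition \ref{projection-complex} and Corollary \ref{spectral-sequence} is available.
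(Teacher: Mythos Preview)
Your second half is fine and is a clean alternative to the paper's approach: the paper uses adjunction and the Grothendieck spectral sequence for the composite $\text{Ext}_{C}^{\bullet}(-,\mathcal{L})\circ L\iota^{*}\iota_{*}$, obtaining the same five-term sequence, while your local-to-global spectral sequence with the Koszul computation of $\mathcal{E}xt_{X}^{q}(\iota_{*}\mathcal{L},\iota_{*}\mathcal{L})$ gives the same two nonzero $E_{2}$ terms in total degree $1$. Both are standard once $\iota$ is a closed immersion.

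The first half, however, contains a genuine error. The semi-orthogonal decomposition here is $D^{b}_{\text{Coh}}(X)=\langle\mathcal{A},\langle\mathcal{E}_{i}\rangle\rangle$ with $\mathcal{A}=\langle\mathcal{E}_{i}\rangle^{\perp}$, so the vanishing that comes for free is $\text{Hom}^{*}(F',\mathcal{P})=0$, \emph{not} $\text{Hom}^{*}(\mathcal{P},F')=0$. Applying $\text{Hom}(-,F')$ to the projection triangle therefore identifies $\text{Hom}^{*}(F',F')$ with $\text{Hom}^{*}(F',\mathcal{G})$, not with $\text{Hom}^{*}(\mathcal{G},F')$; your bridge from $\text{Hom}^{*}(\iota_{*}\mathcal{L},F')$ to $\text{Hom}^{*}(F',F')$ is broken. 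What the long exact sequence from $\text{Hom}(\mathcal{G},-)$ actually needs is the vanishing of $\text{Hom}^{1}(\mathcal{G},F')$ and $\text{Hom}^{2}(\mathcal{G},F')$, and these are not the same groups as $\text{Ext}^{1,2}(F',F')$.

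The fix is to abandon the detour through $\text{Hom}^{*}(F',F')$ and compute $\text{Hom}^{*}(\mathcal{G},F')$ directly. Since $F'$ lies in $\langle\mathcal{E}_{1},\mathcal{E}_{2}\rangle$ and $\text{Hom}^{*}(\mathcal{G},\mathcal{E}_{k})$ is concentrated in degree $2$ by Serre duality (this is the computation in the proof of Proposition~\ref{Hom(P,P)}), the pieces of $F'$ built from $\mathcal{E}_{k}[-1]$ and $\mathcal{E}_{1}[-2]$ contribute to $\text{Hom}^{*}(\mathcal{G},F')$ only in degrees $\geq 3$, giving the needed vanishing in degrees $1$ and $2$. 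This is exactly what the paper's spectral sequence argument does in one stroke: the only $E_{1}$ terms off the $p=0$ column are $E_{1}^{-1,3}$ and $E_{1}^{-2,5}$, sitting in total degrees $2$ and $3$, so $\text{Hom}^{1}(\mathcal{P},\mathcal{P})=E_{1}^{0,1}=\text{Ext}_{X}^{1}(\iota_{*}\mathcal{L},\iota_{*}\mathcal{L})$ with no further work.
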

		Thus, we can characterize another irreducible component of $sM(\mathcal{A})$.
		
		Lemma \ref{|-3F|} shows the existence of the lowest possible degree curve $C$ from which $\mathcal{P}$ could be constructed. A possible advantage of this result is that such $\mathcal{P}$ appears to be one of the homologically simplest objects in the phantom, and showing this precisely would intrinsically characterize $\mathcal{P}$ in $\mathcal{A}$. If we had any kind of intrinsic characterization of objects of the phantom, studying their moduli as we do here would allow us to extract information about the geometry of $X$ using only the triangulated category structure of $\mathcal{A}$. This will hopefully be an avenue for future work.
		
		\vspace{12pt}
		
		\noindent \textbf{Acknowledgements.}
		Thank you to my advisor, Johan de Jong, for his invaluable guidance, as well as to Alex Perry, James Hotchkiss, and Johannes Krah, and Nicol\'as Vilches for many very helpful conversations.  
	\section{Preliminaries}

	\subsection{Krah's Phantom}
		While the following construction of a phantom on a rational surface is almost that discovered by Johannes Krah in \cite{Krah2023}, we will slightly modify his method to study a different phantom subcategory of $D^{b}_{\text{Coh}}(X)$ (see Remark \ref{Krah-relationship} for the precise relationship). While Section \ref{section-generator} and Section \ref{section-skyscraper} of this paper would be unchanged using his original construction, the results of Section \ref{section-two-cohomology-sheaves} would have to be modified and seem to be less elegant. In particular, the different embedding into $D^{b}_{\text{Coh}}(X)$ affects the cohomology sheaves of the objects we will construct.
		
		Let $X$ be the blowup of $\mathbf{P}^{2}$ over $\mathbf{C}$ in $10$ general points, and consider the divisors
		$$D_{i}:=-6H+2\sum_{j=1}^{10}E_{j}-E_{i}\text{ and } 
		F:=-19H+6\sum_{i=1}^{10}E_{i}.$$
		These are the images of $E_{i}$ and $H$ respectively under a reflection of $\text{Pic}(X)$ that preserves the intersection product and fixes $K_{X}$.
		
		By ``10 general points'' we mean that for finitely many $|D|$, the following condition holds:
		\begin{condition}\label{generality}
			For $|D|=|dH-\sum_{i=1}^{10}m_{i}E_{i}|$ with $d,m_{i}\geq0$,
			$\dim|D|$ is minimal among all choices of $10$ points in $\mathbf{P}^{2}$. 
		\end{condition} 
		Every time we appeal to generality of the $10$ points, we will remark on which choices of $|D|$ we want to satisfy this property.
		
		We will fix the following exceptional collection, whose objects are the duals of those in \cite{Krah2023}, 
		$$\langle\mathcal{E}_{1},...,\mathcal{E}_{13}\rangle:=\langle\mathcal{O}_{X}(-2F),\mathcal{O}_{X}(-F),\mathcal{O}_{X}(-D_{1}),...,\mathcal{O}_{X}(-D_{10}),\mathcal{O}_{X}\rangle, $$
		the phantom we study will be $\mathcal{A}:=\langle\mathcal{E}_{i}\rangle_{i}^{\perp}$.		
		\begin{proposition}[\cite{Krah2023}]\label{Krah}
			The collection $\langle\mathcal{E}_{i}\rangle_{i}$	
			is exceptional, and $\mathcal{A}$ is non-zero with $K_{0}(\mathcal{A})=0$.
		\end{proposition}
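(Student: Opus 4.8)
The plan is to deduce the statement from Krah's theorem \cite{Krah2023} by transporting it along the contravariant autoequivalence $(-)^{\vee}:=R\mathcal{H}om_{X}(-,\mathcal{O}_{X})$ of $D^{b}_{\mathrm{Coh}}(X)$: our collection $\langle\mathcal{E}_{i}\rangle_{i}$ is obtained from Krah's by applying $(-)^{\vee}$ (see Remark \ref{Krah-relationship} for how the ordering matches up), and a contravariant autoequivalence sends an exceptional collection $(E_{1},\dots,E_{n})$ to the exceptional collection $(E_{n}^{\vee},\dots,E_{1}^{\vee})$, sends the subcategory it generates to the subcategory generated by the images (so preserves fullness), and induces an isomorphism on $K_{0}$ of the orthogonal complement. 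Since Krah proves that his collection is exceptional and not full with trivial-$K_{0}$ complement, the same follows for $\langle\mathcal{E}_{i}\rangle_{i}$ and $\mathcal{A}$. Because the exceptionality and the $K_{0}$-vanishing are short enough to verify directly in the present conventions, I would also record those arguments, reserving \cite{Krah2023} for the one genuinely hard input, non-fullness.

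For exceptionality: all $\mathcal{E}_{i}$ are line bundles, so $\mathrm{Hom}^{\bullet}(\mathcal{E}_{i},\mathcal{E}_{j})\cong H^{\bullet}(X,\mathcal{E}_{i}^{\vee}\otimes\mathcal{E}_{j})$, and the problem reduces to $H^{\bullet}(X,\mathcal{O}_{X})=\mathbf{C}$ concentrated in degree $0$ (clear, as $X$ is a rational surface) together with $H^{\bullet}(X,\mathcal{O}_{X}(D))=0$ for each of the finitely many divisor classes of the form $D=\mathcal{E}_{i}-\mathcal{E}_{j}$ with $j>i$; writing out the $\mathcal{E}_{i}$, this is the list $\{-F,\ -2F,\ D_{k}-F,\ D_{k}-2F,\ E_{k}-E_{\ell}\ (k\ne\ell),\ -D_{k}\}$. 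For each such $D$ I would compute $\chi(\mathcal{O}_{X}(D))=1+\tfrac12\bigl(D^{2}-D\cdot K_{X}\bigr)$ by Riemann--Roch and check it is $0$; check $h^{0}(\mathcal{O}_{X}(D))=0$ (either because the $H$-coefficient of $D$ is negative, or because the expected dimension of $|D|$ is negative and the points are general, invoking Condition \ref{generality} for that $|D|$); check $h^{2}(\mathcal{O}_{X}(D))=h^{0}(\mathcal{O}_{X}(K_{X}-D))=0$ the same way; and conclude $h^{1}=-\chi+h^{0}+h^{2}=0$. The intersection numbers needed are easy because $F$ and the $D_{i}$ are, by construction, the images of $H$ and $E_{i}$ under an isometry of $\mathrm{Pic}(X)$ fixing $K_{X}$, so e.g. $F^{2}=1$, $F\cdot K_{X}=-3$, $D_{i}\cdot D_{j}=\delta_{ij}-1$, $D_{i}\cdot K_{X}=-1$.

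For $K_{0}(\mathcal{A})=0$: a finite exceptional collection in the saturated category $D^{b}_{\mathrm{Coh}}(X)$ is admissible, so there is a semiorthogonal decomposition $D^{b}_{\mathrm{Coh}}(X)=\langle\mathcal{A},\mathcal{E}_{1},\dots,\mathcal{E}_{13}\rangle$ and hence $K_{0}(X)\cong K_{0}(\mathcal{A})\oplus\bigoplus_{i=1}^{13}\mathbf{Z}[\mathcal{E}_{i}]$. Since $X$ is the blowup of $\mathbf{P}^{2}$ in $10$ points, $K_{0}(X)$ is free of rank $13$ (iterate Orlov's blowup formula, each blowup adding one $\mathbf{Z}$; or use $(\mathrm{rk},c_{1},\chi)$ and $\mathrm{Pic}(X)\cong\mathbf{Z}^{11}$), so comparing ranks forces $\mathrm{rk}\,K_{0}(\mathcal{A})=0$, and as a direct summand of the torsion-free group $K_{0}(X)$ it is itself torsion-free, hence $0$. (Equivalently one can note that the Gram matrix $\bigl(\chi(\mathcal{E}_{i},\mathcal{E}_{j})\bigr)$ is unitriangular, so the $[\mathcal{E}_{i}]$ already span a unimodular, thus saturated, sublattice of $K_{0}(X)$.)

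The main obstacle — the one point where I would invoke \cite{Krah2023} rather than recompute — is showing $\mathcal{A}\ne 0$, i.e. that the collection is not full. This is non-obvious precisely because nothing above obstructs fullness: the collection has length $\mathrm{rk}\,K_{0}(X)$, its classes form a $\mathbf{Z}$-basis of $K_{0}(X)$, and one can check its Hochschild homology matches that of $X$, so the defect is invisible to all additive invariants. Krah's argument rules out fullness through a delicate analysis of the triangulated subcategory generated by the $\mathcal{E}_{i}$ — exhibiting, in effect, a nonzero object right-orthogonal to all of them — and crucially uses the generality of the $10$ points; transporting this conclusion across $(-)^{\vee}$, which preserves fullness, gives $\mathcal{A}\ne 0$ in our setting and completes the proof.
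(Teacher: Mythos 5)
Your proposal is correct and takes essentially the same approach as the paper: it deduces the result from Krah's theorem by transporting across a contravariant autoequivalence of $D^{b}_{\mathrm{Coh}}(X)$ (you use $R\mathcal{H}\mathrm{om}(-,\mathcal{O}_{X})$, the paper uses the Serre-twisted version $R\mathcal{H}\mathrm{om}(-,\omega_{X})$ to match the right-orthogonals precisely, as in Remark \ref{Krah-relationship}; either is fine for this proposition since the Hom identity $\mathrm{Hom}^{*}(\mathcal{E}_{j},\mathcal{E}_{i})=\mathrm{Hom}^{*}(\mathcal{E}_{i}^{\vee},\mathcal{E}_{j}^{\vee})$ is what makes Krah's computations apply verbatim). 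Your extra direct verifications of exceptionality and $K_{0}$-vanishing are correct but not needed; the paper, like you, reserves \cite{Krah2023} only for the genuinely hard input, non-fullness.
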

		\begin{proof}
			This follows from precisely the same computations as the proof of Theorem 1.1 of \cite{Krah2023}. Indeed, Krah shows that $\langle \mathcal{E}_{13}^{\vee},...,\mathcal{E}_{1}^{\vee}\rangle$ is a non-full exceptional collection by showing $\text{Hom}^{*}(\mathcal{E}_{i}^{\vee},\mathcal{E}_{j}^{\vee})=0$ and $\text{Hom}^{0}(\mathcal{E}_{j}^{\vee},\mathcal{E}_{i}^{\vee})=0$ for $i<j$, and we have
			\begin{align*}
				\text{Hom}^{*}(\mathcal{E}_{j},\mathcal{E}_{i}) &=
				\text{Hom}^{*}(\mathcal{E}_{i}^{\vee},\mathcal{E}_{j}^{\vee}) 
			\end{align*}
			for all $i,j$, proving that our collection is exceptional of maximal length but not full.
		\end{proof}
		Our results, such as Proposition \ref{Hom(k(x),k(x))} and Proposition \ref{two-cohomology-sheaves} will provide an alternative proof that this exceptional collection is not full by constructing non-zero objects in $\mathcal{A}$.
		
		\begin{remark}\label{Krah-assumptions}
			The proof of Proposition \ref{Krah} assumes that the $10$ blown-up points are general in the sense that Condition \ref{generality} holds for the following:  
			$$|-F|,\quad |-2F|,\quad |-D_{i}|,\quad |D_{i}-F|,\quad |D_{i}-F|.$$
		\end{remark}
		\begin{remark}\label{Krah-relationship}
			Our $\mathcal{A}$ is the image of Krah's phantom in \cite{Krah2023} under the anti-equivalence 
			$$\mathcal{H}\text{om}(-,\omega_{X})\colon D^{b}_{\text{Coh}}(X)\to D^{b}_{\text{Coh}}(X),\quad
			\mathcal{K}\mapsto\mathcal{K}^{\vee}\otimes\omega_{X}.$$
			Indeed, the elements of Krah's exceptional collection are $\mathcal{E}_{i}^{\vee}$, and for any $\mathcal{K}\in D^{b}_{\text{Coh}}(X)$ we have:
			\begin{align*}
				\text{Hom}^{*}(\mathcal{E}_{i},\mathcal{K}^{\vee}\otimes\omega_{X}) &= 
				\text{Hom}^{*}(\mathcal{K}^{\vee},\mathcal{E}_{i})^{\vee} \\&=
				\text{Hom}^{*}(\mathcal{E}_{i}^{\vee},\mathcal{K})^{\vee},
			\end{align*}
			so $\mathcal{K}^{\vee}\otimes\omega_{X}\in\mathcal{A}$ if and only if $\mathcal{K}$ is in Krah's phantom.
		\end{remark}
	
		The next Lemma shows that many of the ``forward'' Homs between objects of this exceptional collection vanish, which facilitates the nice properties we will discover.
		
		\begin{lemma}\label{vanishing}
			$\textnormal{Hom}^{*}(\mathcal{E}_{i},\mathcal{E}_{j})=\mathbf{C}^{\chi(\mathcal{E}_{j}\otimes\mathcal{E}_{i}^{\vee})}[-2]$ for $0\leq i<j\leq 13$ and $\textnormal{Hom}^{*}(\mathcal{O}_{X}(-D_{i}),\mathcal{O}_{X}(-D_{j}))=0$ for $1\leq i<j\leq 10$.
		\end{lemma}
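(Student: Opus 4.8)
The plan is to reduce everything to cohomology computations of line bundles on $X$, using that all the $\mathcal{E}_i$ are line bundles so that $\textnormal{Hom}^*(\mathcal{E}_i,\mathcal{E}_j) = H^*(X,\mathcal{E}_j\otimes\mathcal{E}_i^\vee)$. First I would recall that $\langle\mathcal{E}_i\rangle$ being exceptional (Proposition \ref{Krah}) already gives $\textnormal{Hom}^{\le 0}(\mathcal{E}_i,\mathcal{E}_j)=0$ for $i<j$; equivalently $H^0$ and $H^{-1}$ of the relevant line bundle vanish, but since line bundles on a surface have cohomology only in degrees $0,1,2$, this just says $H^0(X,\mathcal{E}_j\otimes\mathcal{E}_i^\vee)=0$. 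So the real content is to show that for $0\le i<j\le 13$ we also have $H^2(X,\mathcal{E}_j\otimes\mathcal{E}_i^\vee)=0$, leaving only $H^1$, whose dimension is then forced to be $-\chi(\mathcal{E}_j\otimes\mathcal{E}_i^\vee)$ (note $\chi$ is negative here, matching the $\mathbf{C}^{\chi}[-2]$ notation which must be read as $\mathbf{C}^{h^1}$ in degree $1$ — I would double-check the sign convention in the statement, but in any case the point is concentration in degree $1$).

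For the $H^2$ vanishing I would use Serre duality: $H^2(X,\mathcal{E}_j\otimes\mathcal{E}_i^\vee) = H^0(X,\mathcal{E}_i\otimes\mathcal{E}_j^\vee\otimes\omega_X)^\vee$, and then show this $H^0$ vanishes by exhibiting that the divisor class $D := (\mathcal{E}_i\otimes\mathcal{E}_j^\vee\otimes\omega_X)$ is not effective. This is where Condition \ref{generality} enters: for each of the finitely many pairs $(i,j)$ the class $D$ is of the form $dH - \sum m_k E_k$, and by generality $\dim|D|$ equals its minimal value over all point configurations, which one checks (e.g.\ via the expected dimension / a direct degeneration argument, or by noting the class pairs negatively with a nef class that forces non-effectivity generically) is $-1$, i.e.\ $|D|=\emptyset$. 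Concretely I would tabulate the classes $\mathcal{E}_j\otimes\mathcal{E}_i^\vee$ for all $0\le i<j\le 13$ — these involve $F$, $2F$, $D_k$, differences $D_k - F$, $2F - D_k$, $D_k - D_\ell$, and $F$ itself — and the Serre-dual classes, and verify case by case that none of the Serre-dual classes is effective. Many of these are exactly the classes appearing in Remark \ref{Krah-assumptions}, so the work largely overlaps with Krah's verification and one can cite it.

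For the second assertion, $\textnormal{Hom}^*(\mathcal{O}_X(-D_i),\mathcal{O}_X(-D_j))=0$ for $1\le i<j\le 10$, I would compute $\textnormal{Hom}^* = H^*(X,\mathcal{O}_X(D_i-D_j))$ with $D_i - D_j = E_j - E_i$ (since the $-6H + 2\sum E_k$ parts cancel). Then $H^0(X,\mathcal{O}_X(E_j-E_i))=0$ because $E_j - E_i$ is not effective (its class has negative intersection with the class $E_i$, and $E_i$ is an irreducible $-1$-curve, so any effective representative would have to contain $E_i$, forcing $E_j - 2E_i + (\text{more}) $... cleaner: $(E_j-E_i)\cdot E_i = -1 < 0$ while $E_j - E_i \ne 0$, and $E_i$ irreducible, so $E_j-E_i$ effective is impossible). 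By Serre duality $H^2(X,\mathcal{O}_X(E_j-E_i)) = H^0(X,\mathcal{O}_X(E_i - E_j + K_X))^\vee = 0$ for the same reason applied to the effective-ness of $E_i - E_j + K_X$ (here one uses $K_X\cdot E_i = -1$ and that $K_X$ is not effective on this rational surface, or simply that $(E_i - E_j + K_X)\cdot F' < 0$ for a suitable nef class; alternatively note $\chi = 0$ so it suffices to kill $H^0$ and $H^2$ — actually $\chi(\mathcal{O}_X(E_j - E_i)) = \chi(\mathcal{O}_X) + \frac{1}{2}(E_j-E_i)\cdot(E_j - E_i - K_X) = 1 + \frac{1}{2}(-2 - 0) = 0$). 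Hence $H^1$ also vanishes and the Hom space is zero in all degrees.

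The main obstacle is the $H^2$ (equivalently, non-effectivity) verification in the first part: there are on the order of $\binom{14}{2}$ pairs, and while most reduce immediately (e.g.\ anything whose Serre dual has negative degree against $H$ or against a nef class is trivially non-effective), a handful — precisely those tied to the classes $|-F|,|-2F|,|-D_i|,|D_i-F|$ flagged in Remark \ref{Krah-assumptions} — genuinely require the generality hypothesis and a dimension count. I would present these by first isolating the "easy" pairs with a uniform argument (numerical non-effectivity) and then handling the remaining "hard" pairs by direct appeal to Condition \ref{generality}, citing the corresponding computations in \cite{Krah2023} wherever they coincide.
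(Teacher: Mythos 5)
Your proposal rests on a misreading of the statement that invalidates the main strategy. The notation $\mathbf{C}^{\chi}[-2]$ places the nonzero group in cohomological degree $2$, not degree $1$; this is confirmed both by Lemma \ref{Euler-characteristics}, which records the relevant Euler characteristics as $1,2,3,5,6$ (all positive), and by the way Lemma \ref{vanishing} is invoked in the proof of Proposition \ref{Hom(k(x),k(x))}: ``$\text{Hom}^{*}(\mathcal{E}_{i},\mathcal{E}_{j})$ is zero except in degree $2$.'' So $\chi(\mathcal{E}_j\otimes\mathcal{E}_i^\vee)=h^2\geq 0$ and there is no sign to fix; the actual content of the first assertion is $H^0(X,\mathcal{E}_j\otimes\mathcal{E}_i^\vee)=H^1(X,\mathcal{E}_j\otimes\mathcal{E}_i^\vee)=0$, from which $h^2=\chi$ falls out. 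Your plan to prove $H^2=0$ by Serre duality and non-effectivity of $\mathcal{E}_i\otimes\mathcal{E}_j^\vee\otimes\omega_X$ therefore cannot succeed, and fails concretely: taking $\mathcal{E}_i=\mathcal{O}_X(-F)$, $\mathcal{E}_j=\mathcal{O}_X$, the Serre-dual class is $K_X-F=16H-5\sum_k E_k$, which \emph{is} effective, since $h^2(K_X-F)=h^0(F)=0$ (as $F\cdot H=-19<0$ and $H$ is nef) while $\chi(K_X-F)=\chi(F)=3>0$, forcing $h^0(K_X-F)\geq 3$; thus $H^2(X,\mathcal{O}_X(F))\cong\mathbf{C}^3\neq 0$ exactly as the lemma says. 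A secondary error: exceptionality of $\langle\mathcal{E}_i\rangle$ gives $\text{Hom}^*(\mathcal{E}_j,\mathcal{E}_i)=0$ for $i<j$ (backward Homs), not $\text{Hom}^{\leq 0}(\mathcal{E}_i,\mathcal{E}_j)=0$ in the forward direction, so the vanishing of $H^0$ is not free.

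To repair the approach: show $H^0(X,\mathcal{E}_j\otimes\mathcal{E}_i^\vee)=0$ directly (every forward class $F$, $2F$, $D_k$, $F-D_k$, $2F-D_k$ pairs negatively with the nef class $H$; the classes $D_k-D_\ell=E_\ell-E_k$ are handled by your $(-1)$-curve argument), and then establish $H^1=0$, i.e.\ non-specialty — this is where generality genuinely enters, e.g.\ by Serre-dualizing $h^1(D)=h^1(K_X-D)$ to an effective class and invoking Lemma \ref{standard-form} or Lemma \ref{174/55}. Your treatment of the second assertion (via $D_i-D_j=E_j-E_i$, the $(-1)$-curve $E_i$, and $\chi=0$) is correct. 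For context, the paper's own proof is a one-line citation of the computations in Krah's proof of Theorem 1.1; reconstructing them as you attempt is a reasonable alternative, but the concentration degree and sign of $\chi$ must match.
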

		\begin{proof}
			These $\text{Hom}^{*}(\mathcal{E}_{j}^{\vee},\mathcal{E}_{i}^{\vee})=\text{Hom}^{*}(\mathcal{E}_{i},\mathcal{E}_{j})$ are computed in \cite{Krah2023} in the Proof of Theorem 1.1.
		\end{proof}
		\begin{remark}
			The proof of Lemma \ref{vanishing} assumes Condition \ref{generality} for the same divisor classes listed in Lemma \ref{Krah-assumptions}.
		\end{remark}
	
	\subsection{Linear Systems on $\text{Bl}_{10\text{pts}}\mathbf{P}^{2}$}
		Let $X$ still be the blowup of $\mathbf{P}^{2}$ at $x_{1},...,x_{10}$, and let $|D|$ satisfy the following condition:
		\begin{condition}\label{generality-base-loci}
			$|D|=|dH-\sum_{i=1}^{10}m_{i}E_{i}|$ with $d,m_{i}\geq0$ satisfies Condition \ref{generality}, and so do all $|B|=|d'H-\sum_{i=1}^{10}m_{i}'E_{i}|$ with $0\leq d'\leq d$ and $0\leq m_{i}'\leq m_{i}$.
		\end{condition}
		
		Here we collect some results from the literature and some additional lemmas which will be useful in multiple sections below. 
		
		\begin{lemma}[Theorem 7 of \cite{MarcinDumnicki2011}]\label{Cremona}
			The linear systems $|D|$ and $|d'H-\sum_{i=1}^{10}m_{i}'E_{i}|$ have the same dimension, where
			$$d'=2d-(m_{1}+m_{2}+m_{3}),\quad
			m_{i}'=\begin{cases}
				d-(m_{1}+m_{2}+m_{3})+m_{i} & i=1,2,3 \\
				m_{i} & i>3
			\end{cases} $$
		\end{lemma}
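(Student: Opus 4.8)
\emph{Proof proposal.} The plan is to realize the passage from $|D|$ to $|d'H-\sum_i m_i'E_i|$ as the action on $\mathrm{Pic}(X)$ of an isomorphism of blown-up surfaces induced by the standard quadratic Cremona transformation centered at $x_1,x_2,x_3$, and then use that an isomorphism of surfaces preserves $h^0$ of line bundles. Concretely, let $\sigma\colon\mathbf{P}^2\dashrightarrow\mathbf{P}^2$ be the Cremona involution with fundamental points $x_1,x_2,x_3$ (in suitable coordinates $[a:b:c]\mapsto[bc:ca:ab]$). Blowing up $x_1,x_2,x_3$ resolves $\sigma$ to an isomorphism onto the blowup of the target plane at the three fundamental points $y_1,y_2,y_3$ of $\sigma^{-1}$, under which the exceptional curve over $x_i$ is identified with the strict transform of the line $\overline{x_jx_k}$, and conversely, where $\{i,j,k\}=\{1,2,3\}$. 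Since the ten points are general, $x_4,\dots,x_{10}$ are distinct from $x_1,x_2,x_3$ and lie on none of the three lines $\overline{x_jx_k}$, so $\sigma$ is a local isomorphism near each of them; hence the construction extends to an isomorphism $\phi\colon X\xrightarrow{\ \sim\ }X'$, where $X'$ is the blowup of $\mathbf{P}^2$ at $y_1,y_2,y_3,x_4,\dots,x_{10}$ (a configuration which is again general, the general-position locus in $(\mathbf{P}^2)^{10}$ being preserved by standard Cremona).

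Next I would record that, writing $H',E_1',\dots,E_{10}'$ for the standard classes on $X'$, the map $\phi^*\colon\mathrm{Pic}(X')\to\mathrm{Pic}(X)$ is determined by $\phi^*H'=2H-E_1-E_2-E_3$, by $\phi^*E_i'=H-E_j-E_k$ for $\{i,j,k\}=\{1,2,3\}$, and by $\phi^*E_i'=E_i$ for $i>3$; one checks directly that this is an isometry of the Picard lattice fixing $K_X$. A short computation then shows that, for $D':=d'H'-\sum_i m_i'E_i'$ with $d'$ and $m_i'$ exactly as in the statement, one has $\phi^*D'=dH-\sum_i m_iE_i=D$ --- and, as a consistency check, applying the substitution a second time returns $(d,m_1,\dots,m_{10})$, reflecting that $\sigma$ is an involution. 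Since $\phi$ is an isomorphism, $\dim|D|=\dim|D'|_{X'}$; pushing $D'$ forward along $X'\to\mathbf{P}^2$ (a negative coefficient $m_i'$ imposes no condition and may be replaced by $0$ without changing $h^0$, and if $d'<0$ then both sides have $h^0=0$) identifies this with the dimension of $|d'H-\sum_i m_i'E_i|$ on a general ten-point blowup, hence on $X$ --- once we add the finitely many numerical types $(d',m_1',\dots,m_{10}')$ arising this way to the list of linear systems for which Condition \ref{generality} is required.

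The one genuine obstacle is the verification that $\phi$ really is an isomorphism of the ten-point blowups --- that $\sigma$ is defined and \'etale-locally trivial at $x_4,\dots,x_{10}$, and that the transformed configuration stays general --- which is precisely where generality of the ten points enters; everything else is the bookkeeping of a linear substitution. This is the content of Theorem 7 of \cite{MarcinDumnicki2011}, whose proof is exactly this Cremona computation.
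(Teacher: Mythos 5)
The paper offers no proof of this lemma at all --- it is stated as a citation of Theorem~7 of \cite{MarcinDumnicki2011} --- so there is no internal argument to compare against. Your argument is the standard Cremona-reduction proof that the cited theorem encapsulates, and it is essentially correct: resolving the quadratic involution $\sigma$ centered at $x_1,x_2,x_3$ to an isomorphism $\phi\colon X\to X'$ of ten-point blowups, writing down $\phi^*$ on $\operatorname{Pic}$, checking it is an isometry fixing $K$, verifying $\phi^*D'=D$ (your formulas do check out and are involutive), and then transferring the dimension count from $X'$ back to $X$ using generality. Your handling of negative coefficients is also right: a negative $m_i'$ forces $E_i$ into the base locus so it may be set to $0$ without changing $h^0$, and $d'<0$ forces emptiness on any blowup since every effective class pairs non-negatively with $H$.

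The one place where your sketch is a bit loose --- and it is precisely the place you yourself flag as "the one genuine obstacle" --- is the claim that "the general-position locus in $(\mathbf{P}^2)^{10}$ is preserved by standard Cremona." There is no single general-position locus; for each divisor class $|B|$ appearing in the argument there is an open dense $U_{|B|}\subset(\mathbf{P}^2)^{10}$ where $\dim|B|$ is minimal, and the final step $\dim|D'|_{X'}=\dim|D'|_X$ requires both the original configuration \emph{and} its Cremona image to lie in $U_{|B|}$ for $B$ the (truncated) transformed class. The correct statement is that the configuration map $c\colon(x_1,\dots,x_{10})\mapsto(x_1,x_2,x_3,\sigma(x_4),\dots,\sigma(x_{10}))$ is a birational involution of $(\mathbf{P}^2)^{10}$, so $c^{-1}(U_{|B|})$ is again open dense, and one imposes membership in the finite intersection $\bigcap U_{|B|}\cap c^{-1}(U_{|B|})$ over the finitely many classes actually used. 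This matches your suggestion to enlarge the list of classes for which Condition~\ref{generality} is required, and it is consistent with the paper's convention of imposing generality only for finitely many explicitly-needed classes; but as written, Condition~\ref{generality} (which only constrains classes with $d,m_i\geq 0$, and only on the original configuration, not its Cremona images) does not literally suffice, and your proof makes visible a bookkeeping point the paper leaves implicit by delegating to the reference.
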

		The operation of replacing $d,m_{i}$ with $d',m_{i}'$ is called \emph{Cremona reduction}.
		
		Now, let the symmetric group $S_{10}$ act on $\{x_{i}\}_{i=1}^{10}$ by permutation. This induces an action on $\text{Pic}(X)$ via $\sigma |H|=|H|$ and $\sigma |E_{i}|=|E_{\sigma(i)}|$.

		\begin{lemma}\label{symmetry}
			$\dim\sigma|D|$ is the same for all $\sigma$.
		\end{lemma}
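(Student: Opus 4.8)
The plan is to reduce the statement to the fact that, for a configuration of $10$ general points, the dimension of $|dH-\sum_i a_i E_i|$ depends only on $d$ and the unordered multiset $\{a_1,\dots,a_{10}\}$, not on the ordering. First I would rewrite $\sigma|D|$ explicitly: since $\sigma|H|=|H|$ and $\sigma|E_i|=|E_{\sigma(i)}|$, we have
$$\sigma|D|=\bigl|\,dH-\textstyle\sum_{i=1}^{10}m_{\sigma^{-1}(i)}E_i\,\bigr|,$$
which is again of the form appearing in Condition \ref{generality} (all coefficients are nonnegative) and has exactly the same multiset of multiplicities as $|D|$.

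Next I would pass to $\mathbf{P}^2$: for any configuration $(y_1,\dots,y_{10})$ of distinct points and any nonnegative integers $a_i$, the dimension of $|dH-\sum_i a_iE_i|$ on $\text{Bl}_{(y_i)}\mathbf{P}^2$ equals $h^0(\mathbf{P}^2,\mathcal{I}_Z(d))-1$, where $Z=\sum_i a_i y_i$ is the associated fat point scheme. This is an upper-semicontinuous function of $(y_1,\dots,y_{10})\in(\mathbf{P}^2)^{10}$, hence attains a minimum value $N(d;a_1,\dots,a_{10})$ on a dense Zariski-open subset; Condition \ref{generality} for a given linear system is precisely the requirement that the fixed points $x_1,\dots,x_{10}$ lie in this open set.

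The key observation is that relabeling the points is an automorphism of $(\mathbf{P}^2)^{10}$ preserving the locus of distinct points, and it carries the universal fat point scheme $\sum_i a_i y_i$ to $\sum_i a_{\sigma(i)}y_i$; thus it identifies the two families and yields $N(d;a_1,\dots,a_{10})=N(d;a_{\sigma(1)},\dots,a_{\sigma(10)})$ for every $\sigma\in S_{10}$. Assuming, as part of the genericity of the $10$ points, that Condition \ref{generality} holds for $|D|$ and for all its translates $\sigma|D|$, we conclude
$$\dim\sigma|D|=N(d;m_{\sigma^{-1}(1)},\dots,m_{\sigma^{-1}(10)})=N(d;m_1,\dots,m_{10})=\dim|D|.$$

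The one point to be careful about is bookkeeping rather than mathematics: a priori the open locus where $|D|$ has minimal dimension need not coincide with the one where $\sigma|D|$ does, so one must record that "$10$ general points" is being used here in the slightly stronger sense that Condition \ref{generality} holds for the full $S_{10}$-orbit of $|D|$. In the applications the relevant $|D|$ already comes equipped with a finite list of auxiliary systems assumed to be generic (as in Condition \ref{generality-base-loci}), and one simply closes that finite list under the $S_{10}$-action; everything else is the standard semicontinuity argument.
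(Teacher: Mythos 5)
Your proposal is correct and takes essentially the same approach as the paper: both exploit the fact that permuting the $E_i$'s in the class $D$ is equivalent to permuting the blown-up points, so Condition \ref{generality} (minimality of $\dim|D|$ over point configurations) forces $\dim\sigma|D|=\dim|D|$. Your write-up is somewhat more careful than the paper's terse contradiction argument in explicitly flagging that Condition \ref{generality} must hold for the whole $S_{10}$-orbit of $|D|$, a hypothesis the paper leaves implicit and only records case-by-case in later remarks.
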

		\begin{proof}
			If not, then we can find $\dim\sigma|D|
			<\dim|D|$. But recall that $\{x_{i}\}_{i}$ are in general position with respect to $|D|$, meaning that $\dim|D|$ is minimal. But replacing $\{x_{i}\}_{i}$ with $\{\sigma x_{i}\}_{i}$ reduces this dimension, which is a contradiction.
		\end{proof}
		
		\begin{lemma}\label{permutation}
			Let $\sigma\in S_{10}$ be a symmetry of $\{m_{i}\}_{i=1}^{10}$. If $B$ is in the base locus of $|D|$, then some effective divisor $B'\in\sigma|B|$ is in the base locus of $|D|$ as well.
		\end{lemma}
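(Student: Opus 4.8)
The plan is to translate ``$B$ lies in the base locus of $|D|$'' into a statement about the dimension of a linear system, transport that statement across the permutation $\sigma$ using Lemma \ref{symmetry}, and then translate back.

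The reformulation I want is: for an effective divisor $B$, we have $B\subseteq\mathrm{Bs}\,|D|$ if and only if $\dim|D-B|=\dim|D|$. To see this, let $s_{B}\in H^{0}(X,\mathcal{O}_{X}(B))$ be the canonical section with zero divisor $B$ and consider multiplication by $s_{B}$, a map $H^{0}(\mathcal{O}_{X}(D-B))\to H^{0}(\mathcal{O}_{X}(D))$. Since $X$ is integral this map is injective, and since $X$ is smooth its image is exactly the subspace of sections whose divisor of zeros is $\ge B$ (divide by $s_{B}$ and note the quotient has no poles). Hence the map is surjective precisely when every member of $|D|$ contains $B$, and by injectivity surjectivity is equivalent to the equality of dimensions.

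Now I would run the symmetry argument. Because $\sigma$ is a symmetry of $\{m_{i}\}$ it fixes the class $[D]=dH-\sum_{i}m_{i}E_{i}$, so $\sigma\bigl(|D-B|\bigr)=\bigl|\,[D]-\sigma[B]\,\bigr|=|D-\sigma B|$. Writing $[B]=d_{B}H-\sum_{i}m_{i}^{B}E_{i}$ and $[D-B]=(d-d_{B})H-\sum_{i}(m_{i}-m_{i}^{B})E_{i}$, I would check (see the next paragraph) that these classes, and therefore also $\sigma[B]$ and $\sigma[D-B]$, have all coefficients in the ranges $0\le d'\le d$ and $0\le m_{i}'\le m_{i}$, so that Lemma \ref{symmetry} applies to the systems $|B|$ and $|D-B|$. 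It yields $\dim|\sigma B|=\dim|B|\ge 0$, so $\sigma|B|$ contains an effective divisor $B'$ (necessarily with $[B']=\sigma[B]$), and it yields $\dim|D-\sigma B|=\dim\sigma\bigl(|D-B|\bigr)=\dim|D-B|=\dim|D|$, using the reformulation for $B$. Since $[D-B']=[D]-\sigma[B]=[D-\sigma B]$, we get $\dim|D-B'|=\dim|D|$, and the reformulation read backwards says exactly that $B'$ lies in the base locus of $|D|$.

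I expect the only real work to be the bracketed verification that the classes in play are in range. That $d_{B}\ge 0$ is nefness of $H$; that $d_{B}\le d$ and $m_{i}-m_{i}^{B}\ge 0$ follow from intersecting $B\le D'$ (for $D'\in|D|$) with the nef class $H$ and with $E_{i}$, using that no $E_{i}$ is a fixed component of the in-range system $|D|$ so that $E_{i}\cdot B\ge 0$ and $E_{i}\cdot(D'-B)\ge 0$; and $m_{i}^{B}\ge 0$ is the same observation. This is precisely where the hypothesis that $\sigma$ permutes the multiplicities $\{m_{i}\}$ among themselves — rather than being an arbitrary permutation — is essential: it is what guarantees that the coefficients of $\sigma[B]$ and $\sigma[D-B]$ are still bounded above by the corresponding $m_{i}$, so that Condition \ref{generality-base-loci}, hence Lemma \ref{symmetry}, can be invoked for the translated classes as well.
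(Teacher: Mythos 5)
Your argument is essentially the paper's: apply Lemma \ref{symmetry} to $|B|$ for non-emptiness of $\sigma|B|$, then to $|D-B|$ to conclude $\dim|D-B'|=\dim\sigma|D-B|=\dim|D-B|=\dim|D|$, using that $\sigma$ fixes $[D]$ to identify $\sigma|D-B|$ with $|D-\sigma B|$. You spell out the ``$B$ lies in the base locus iff $\dim|D-B|=\dim|D|$'' characterization and the check that the coefficients of $[B]$ and $[D-B]$ stay in the range required by Condition \ref{generality-base-loci}, both of which the paper leaves implicit, but the core reasoning is the same.
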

		\begin{proof}
			By Lemma \ref{symmetry}, and noting that we assumed $\{x_{i}\}_{i}$ are in general position with respect to $|B|$, we have $\sigma|B|$ is non-empty so some such $B'\in\sigma|B|$ exists. Then, likewise noting that $\{x_{i}\}_{i}$ are also in general position with respect to $|D-B|$,  
			\begin{align*}
				\dim |D| &= \dim |D-B| \\&=
				\dim \sigma|D-B| \\&=
				\dim |D-B'|
			\end{align*}
			implying that $B'$ is in the base locus of $D$.
		\end{proof}
			
		\begin{definition}
			A linear system $|D|$ is called \emph{special} if $h^{0}(X,\mathcal{O}_{X}(D))>0$ and $h^{0}(X,\mathcal{O}_{X}(D))>\chi(X,\mathcal{O}_{X}(D))$.  			
		\end{definition}
		Equivalently, special linear systems are those with $h^{0}(X,\mathcal{O}_{X}(D))h^{1}(X,\mathcal{O}_{X}(D))\neq 0$, meaning that the dimension of $|D|$ is greater than expected from the Euler characteristic. 
		
		The following results we cite assume $\{x_{i}\}_{i=1}^{10}$ are general. In most of our cases, the generality will follow a posteriori from Condition \ref{generality} or Condition \ref{generality-base-loci} applied to specific divisor classes.  

		\begin{lemma}\label{standard-form}
			$|D|$ is non-special if $d\geq\max_{i>j>k}\{m_{i}+m_{j}+m_{k}\}$ and all $m_{i}\leq 11$.
		\end{lemma}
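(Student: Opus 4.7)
The plan is to reduce the claim to the \emph{standard form} condition with respect to Cremona transformations, and then invoke a non-speciality theorem from the literature on linear systems on $\text{Bl}_{10}\mathbf{P}^{2}$ with general centers.

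First, I would reorder so that $m_{1}\geq m_{2}\geq\cdots\geq m_{10}$; the hypothesis $d\geq\max_{i>j>k}\{m_{i}+m_{j}+m_{k}\}$ then yields $d\geq m_{i}+m_{j}+m_{k}$ for every triple, and in particular $d\geq m_{1}+m_{2}+m_{3}$. By Lemma \ref{Cremona}, the Cremona reduction based at any three of the points produces $d'=2d-(m_{i}+m_{j}+m_{k})\geq d$, so no Cremona reduction decreases the degree. This is the standard-form condition: $|D|$ already represents its Cremona-minimal model.

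Second, I would cite the corresponding non-speciality result. For $10$ general points in $\mathbf{P}^{2}$, the Harbourne--Hirschowitz conjecture is known, and its content in standard form is that a special system must contain a $(-1)$-curve as a fixed component of multiplicity $\geq 2$. Under $m_{i}\leq 11$ and $d\geq m_{1}+m_{2}+m_{3}$, one enumerates the finite list of $(-1)$-classes $L=aH-\sum b_{i}E_{i}$ on $X$ with $b_{i}\leq 11$ and checks that none can appear with multiplicity $\geq 2$ in the base locus: the required numerical condition $L\cdot D\leq -2$, combined with the standard-form inequality, leads to a contradiction in each case. This is essentially the content of a result in Dumnicki \cite{MarcinDumnicki2011} (or a companion paper in the same line of work), and the proof here is simply a citation of that result, together with the observation above that the hypothesis implies standard form.

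The main obstacle, were one to give a self-contained argument rather than a citation, is the enumeration of potentially obstructing $(-1)$-curves on $X$ with bounded multiplicities and the verification that each is excluded by the standard-form inequality. The bound $m_{i}\leq 11$ is precisely what keeps this list finite and manageable, and the hypothesis $d\geq m_{1}+m_{2}+m_{3}$ is precisely what rules the remaining classes out numerically; given these two inputs the rest of the non-speciality assertion follows from the inductive peeling-off of fixed $(-1)$-curves together with Kawamata--Viehweg vanishing for the residual nef divisor.
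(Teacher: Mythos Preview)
Your overall strategy---observe that the hypothesis puts $|D|$ in standard form and then cite a non-speciality result from the literature---matches the paper's, which is also a pure citation. But there is a genuine error: you assert that the Harbourne--Hirschowitz conjecture is known for $10$ general points in $\mathbf{P}^{2}$. It is not; SHGH is proven only for $n\leq 9$ general points and is wide open for $n\geq 10$ (for homogeneous multiplicities on $10$ points it would imply Nagata's conjecture). The hypothesis $m_{i}\leq 11$ is present precisely because one cannot invoke full SHGH here: Corollary~26 of \cite{DUMNICKI2007621} establishes the implication ``not $(-1)$-special $\Rightarrow$ not special'' only under this multiplicity bound, by a direct computational reduction method rather than by a $(-1)$-curve enumeration.

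Your proposed enumeration of $(-1)$-classes $L=aH-\sum b_{i}E_{i}$ with $b_{i}\leq 11$ is also off the mark: the bound $m_{i}\leq 11$ constrains $D$, not the $(-1)$-curves, and on the blowup of $\mathbf{P}^{2}$ at $10$ general points there are infinitely many $(-1)$-classes, so no such finite list exists. The paper separates the two hypotheses cleanly between two citations: Proposition~1.4 of \cite{CM2011} uses the standard-form inequality $d\geq m_{1}+m_{2}+m_{3}$ to conclude that $|D|$ is not $(-1)$-special, and Dumnicki's Corollary~26 uses $m_{i}\leq 11$ to pass from ``not $(-1)$-special'' to ``not special''. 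Your first paragraph correctly identifies the first step; the second step needs the correct citation and the correct understanding of why the multiplicity bound is there.
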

		\begin{proof}
			Proposition 1.4 of \cite{CM2011} gives that such a system is not $(-1)$-special, which combined with Corollary 26 of \cite{DUMNICKI2007621} implies that it is not special.
		\end{proof}
		\begin{lemma}[\cite{Petrakiev2014}]\label{non-special}
			If $\frac{d}{m}<\frac{2280}{721}$, then $|D|$ is empty.
		\end{lemma}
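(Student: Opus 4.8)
The plan is a specialisation argument. Since $|dH-\sum_i m_iE_i|\subseteq|dH-m\sum_i E_i|$ when $m=\min_i m_i$, it suffices to treat the uniform case $|D|=|dH-m\sum_{i=1}^{10}E_i|$; and since Condition \ref{generality} says that general points minimise the dimension of a linear system, it is enough to exhibit \emph{one} configuration of ten points in $\mathbf{P}^2$ for which $H^0(X,\mathcal{O}_X(D))=0$ for every pair $(d,m)$ with $d/m<2280/721$, because semicontinuity of $h^0$ in a flat family of configurations then forces the same for general points. Note that $2280/721$ is rational and slightly below $\sqrt{10}$ (indeed $2280^2=5198400<5198410=10\cdot721^2$), so the bound is a weakened form of Nagata's conjecture for ten points and we should not expect to reach the constant $\sqrt{10}$ itself.

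First I would try the configuration in which all ten points lie on a fixed smooth plane cubic, whose strict transform $\tilde E\subset X$ is again an elliptic curve, chosen so that the points are in general position on $\tilde E$. Given a hypothetical $C\in|D|$, either $\tilde E\not\subseteq C$ or $\tilde E\subseteq C$. In the first case $\mathcal{O}_X(D)|_{\tilde E}$ is an effective line bundle of degree $D\cdot\tilde E=3d-10m$ on $\tilde E$, so $3d\ge10m$, and its isomorphism class is pinned down by the (generic) position of the points on $\tilde E$; feeding this into the restriction sequence $0\to\mathcal{O}_X(D-\tilde E)\to\mathcal{O}_X(D)\to\mathcal{O}_{\tilde E}(D)\to0$ gives an inductive handle. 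In the second case write $C=\tilde E+C'$ with $C'\in|(d-3)H-(m-1)\sum_i E_i|$ and recurse on $(d-3,m-1)$. Carried out naively this only yields $d/m\ge3$.

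To reach the near-optimal constant one must instead degenerate the surface itself, in the spirit of Ciliberto--Miranda and Harbourne--Hirschowitz: degenerate $\mathbf{P}^2$ to a chain of rational surfaces glued along elliptic (plane-cubic) double curves, distribute the ten points among the components in a carefully optimised way, and analyse the limit linear system one component at a time. On each component the sections are controlled by Riemann--Roch on the gluing elliptic curve together with its group law -- genericity of the points kills every ``unexpected'' section -- while the multidegrees of the limit curve along the double curves have to fit together. Bootstrapping this degeneration produces a recursively defined increasing sequence of admissible slopes converging to $\sqrt{10}$ from below, and $2280/721$ is the value attained after finitely many iterations.

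The hard part is exactly this last step: organising the degeneration and the induction so that the arithmetic closes, i.e.\ proving that the relevant limit systems really are empty for the chosen distribution of points. This is a delicate combinatorial optimisation -- deciding how many points to put on each component and propagating $h^0$ and $h^1$ through a tree of exact sequences -- and it is where the substance of \cite{Petrakiev2014} lies; for the full bookkeeping I would cite that paper rather than reproduce it.
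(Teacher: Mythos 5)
The paper offers no proof of this lemma; it is quoted verbatim as a theorem of \cite{Petrakiev2014}, and the attribution is the entire content. Your sketch therefore says more than the paper does, but it is not a proof either: you and the paper both ultimately defer to the cited reference for the substance of the argument. As a description of the \emph{kind} of argument one expects behind a near-$\sqrt{10}$ emptiness bound for ten fat points, your sketch is reasonable, and you correctly diagnose why the naive specialisation fails: for ten points on a smooth plane cubic $E$ the divisor $m\widetilde{E}$ lies in $|3mH-m\sum_i E_i|$, so that configuration already has nontrivial sections at slope $3$, and semicontinuity of $h^0$ yields nothing for slopes in $[3,2280/721)$; closing the gap requires degenerating the surface itself, not merely the points, in the style of Ciliberto--Miranda and Harbourne--Hirschowitz. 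Whether \cite{Petrakiev2014} uses exactly that degeneration scheme is something your sketch does not establish.

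Two small remarks. First, your opening reduction via $|dH-\sum_i m_iE_i|\subseteq|dH-m\sum_i E_i|$ with $m=\min_i m_i$ is logically fine (the larger system being empty forces the smaller one to be), but it is unnecessary: the lemma as stated, with its unqualified ``$d/m$'', and every application of it in the paper, concern uniform multiplicities $m_1=\dots=m_{10}=m$, and the hypothesis has no natural meaning otherwise. Second, since the combinatorial heart of the degeneration is precisely what you leave to \cite{Petrakiev2014}, your proposal should be read as orientation and context rather than a verification --- which is the same role the paper assigns it.
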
		
		
		\begin{lemma}[Theorem 0.1 of \cite{CM2011}]\label{174/55}
			If $\frac{d}{m}\geq \frac{174}{55}$, then $|D|$ is non-special.
		\end{lemma}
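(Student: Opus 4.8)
The plan is essentially to cite \cite{CM2011}, since this is exactly Theorem~0.1 there; but let me explain what goes into it and why the elementary tools collected above cannot reach it. The key obstruction is that for the hardest case --- $10$ points of equal multiplicity $m$ --- Cremona reduction is powerless: Lemma~\ref{Cremona} sends $(d;\,m^{10})$ to $(2d-3m;\,(d-2m)^{3},\,m^{7})$, and a second application returns $(d;\,m^{10})$, so the degree never decreases. This is precisely why the window $d/m\approx\sqrt{10}$ is delicate (note $2280/721<\sqrt{10}<174/55$, so Lemma~\ref{non-special} and Lemma~\ref{174/55} leave a genuine gap around $\sqrt{10}$), and it forces the use of a degeneration argument rather than birational manipulations.

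Following Ciliberto--Miranda, I would set up a degeneration of the plane: a family $\mathcal{X}\to\Delta$ over a disc whose general fiber is $\mathbf{P}^{2}$ and whose central fiber is a transverse union $\mathbf{P}^{2}\cup_{R}\mathbf{F}$ of $\mathbf{P}^{2}$ with a rational surface $\mathbf{F}$ (such as $\mathbf{F}_{1}$ or an iterated blowup of it) glued along a curve $R$. One chooses a line bundle on $\mathcal{X}$ restricting to $\mathcal{O}(d)$ on the general fiber and distributes the $10$ general points, letting some specialize onto the $\mathbf{P}^{2}$ component and the rest onto $\mathbf{F}$. By semicontinuity, $h^{0}$ of the original system is bounded above by the dimension of the space of sections of the limit bundle that glue along $R$; this sits in an exact sequence expressing the limit system in terms of a restricted system on one component and a kernel system on the other, each with fewer points and smaller numerical invariants. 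Choosing the distribution of points so that both sub-systems satisfy an inductive hypothesis of the same shape (with a numerical inequality calibrated to yield $174/55$ in the end), one runs an induction and finishes with finitely many base cases, which can be checked directly or via Lemma~\ref{standard-form}.

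The hard part will be the bookkeeping inside the degeneration. One has to (i) choose at each step how many points land on each component so that both resulting sub-systems lie in the range where the inductive numerical hypothesis applies --- this is where the precise constant $174/55$ is extracted by optimization; (ii) prove that the limit linear system is regular, i.e.\ that the restricted and kernel systems attain their expected dimensions and no spurious section survives in the limit, which is the real content and the step most likely to fail outside the stated range; and (iii) handle the fact that iterating the degeneration produces a central fiber with a chain of several components, over which the whole analysis must be carried out. These are the technical heart of \cite{CM2011}; we will not reproduce them here and will use only the numerical statement.
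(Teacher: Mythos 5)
Your proposal correctly identifies that this lemma is exactly Theorem~0.1 of Ciliberto--Miranda and that the paper's own treatment is simply to cite it without reproducing the degeneration argument; that is the same approach. Your supporting remarks (that homogeneous Cremona reduction on $(d;m^{10})$ is involutive and hence powerless, and that $2280/721<\sqrt{10}<174/55$ so the two cited bounds sandwich the homogeneous threshold) are accurate and helpfully explain why a genuinely different technique is needed, but they are not part of the paper's proof, which consists solely of the citation.
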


	\subsection{The Projection Spectral Sequence}

		We will use injective resolutions to compute the functor $i^{*}$. Alternative approaches would be to employ the bar resolution as in \cite{Kuznetsov2012}, or to use Postnikov systems, twisted complexes, and convolutions such as in \cite{10.1093/imrn/rnaa371}. 
		
		We will make use of the notion of totalization of multicomplexes, following \cite{Chardin_2023}.
		
		\begin{definition}
			An \emph{$n$-multicomplex} $C^{\underline{\bullet}}$ is a $\bigoplus_{i=1}^{n}\mathbf{Z}e_{i}$-graded object, with differentials $d^{i}$ of degree $e_{i}$ for $i=1,...,n$ that square to zero and commute with each other.
			
			Letting $\sigma:d^{i}=(-1)^{\deg}d^{i}$, the \emph{totalization} $\text{Tot}(C^{\underline{\bullet}})^{\bullet}$ is the complex whose degree $m$ term is the direct sum of the graded pieces of $C^{\underline{\bullet}}$ with total degree $m$ and whose differential is $d^{m}:=\sum_{i=1}^{n}\sigma(d^{i})$.
		\end{definition}
		Our multicomplexes will either be double complexes or come from tensor products of complexes. To agree with the sign conventions above, we define the composition map on Hom complexes as follows:
		$$\text{Hom}^{\bullet}(A,B)\otimes\text{Hom}^{\bullet}(B,C)\to\text{Hom}^{\bullet}(A,C),\quad
		f\otimes g\mapsto (-1)^{\deg f+\deg g}g\circ f.$$
		
		We show a technical lemma that will allow us to commute Homs with tensor products.
		
		\begin{lemma}\label{pullout}
			For $X$ a Noetherian scheme over the field $k$, let $V^{\bullet}$ be a bounded complex of $k$-vector spaces with finite dimensional cohomologies, and let $I^{\bullet}$ and $G^{\bullet}$ be bounded complexes of injective objects in $\textnormal{QCoh}(X)$. Then $\textnormal{Tot}(V^{\bullet}\otimes I^{\bullet})$ is a bounded complex of injectives objects of $\textnormal{QCoh}(X)$, and 
			$$\textnormal{Tot}(V^{\bullet}\otimes\textnormal{Hom}^{\bullet}(G^{\bullet},I^{\bullet}))\to\textnormal{Hom}^{\bullet}(G^{\bullet},\textnormal{Tot}(V^{\bullet}\otimes I^{\bullet})) $$
			is a quasi-isomorphism.
		\end{lemma}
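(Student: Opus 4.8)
The plan is to reduce, via the quasi-invariance of all the constructions in the variable $V^{\bullet}$, to the trivial case where $V^{\bullet}$ is the field $k$ placed in degree $0$. The first assertion is immediate: since $V^{\bullet}$ and $I^{\bullet}$ are bounded, only finitely many bidegrees contribute, so $\textnormal{Tot}(V^{\bullet}\otimes I^{\bullet})$ is bounded; and each term $V^{j}\otimes_{k}I^{\ell}$ is a coproduct of copies of the injective object $I^{\ell}$, which is again injective because $X$ is Noetherian and coproducts of injectives in $\textnormal{QCoh}(X)$ of a Noetherian scheme are injective. It remains to prove the quasi-isomorphism claim.

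First I would record that each of
$$V^{\bullet}\mapsto\textnormal{Tot}(V^{\bullet}\otimes I^{\bullet}),\quad
V^{\bullet}\mapsto\textnormal{Tot}(V^{\bullet}\otimes\textnormal{Hom}^{\bullet}(G^{\bullet},I^{\bullet})),\quad
V^{\bullet}\mapsto\textnormal{Hom}^{\bullet}(G^{\bullet},\textnormal{Tot}(V^{\bullet}\otimes I^{\bullet}))$$
is a functor on bounded complexes of $k$-vector spaces carrying quasi-isomorphisms to quasi-isomorphisms, and that the arrow in the statement is a natural transformation between the last two. For the first two functors: if $V^{\bullet}\to W^{\bullet}$ is a quasi-isomorphism, its cone $U^{\bullet}$ is a bounded acyclic complex of $k$-vector spaces; since $-\otimes_{k}M$ is exact for every $k$-module $M$ and both $I^{\bullet}$ and $\textnormal{Hom}^{\bullet}(G^{\bullet},I^{\bullet})$ are bounded, the resulting double complex has acyclic columns, so by its (finite) spectral sequence its totalization, which is the cone of the induced map, is acyclic. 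For the third functor: by the previous point $\textnormal{Tot}(V^{\bullet}\otimes I^{\bullet})\to\textnormal{Tot}(W^{\bullet}\otimes I^{\bullet})$ is a quasi-isomorphism of bounded complexes of injectives, its cone is a bounded acyclic complex of injectives and hence contractible, and applying $\textnormal{Hom}^{\bullet}(G^{\bullet},-)$ to a contractible complex produces a contractible complex, so the induced map on Hom-complexes is a quasi-isomorphism.

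Next I would decompose $V^{\bullet}\cong H^{\bullet}(V^{\bullet})\oplus C^{\bullet}$ as complexes of $k$-vector spaces, where $H^{\bullet}(V^{\bullet})$ carries the zero differential and $C^{\bullet}$ is bounded and contractible (split $V^{j}=B^{j}\oplus H^{j}\oplus L^{j}$ so that the differential restricts to an isomorphism $L^{j}\xrightarrow{\sim}B^{j+1}$). On the $C^{\bullet}$-summand both sides of the map are contractible — directly for $\textnormal{Tot}(C^{\bullet}\otimes-)$, and then for $\textnormal{Hom}^{\bullet}(G^{\bullet},\textnormal{Tot}(C^{\bullet}\otimes I^{\bullet}))$ by the last observation of the previous paragraph — so the map is a quasi-isomorphism there. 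By additivity it suffices to treat $V^{\bullet}=H^{\bullet}(V^{\bullet})$, a bounded complex of finite-dimensional spaces with zero differential. Such a complex is a finite direct sum of one-dimensional complexes concentrated in a single degree; all three functors are additive and commute with shifts, so by additivity we are reduced to $V^{\bullet}=k$ in degree $0$, where $\textnormal{Tot}(V^{\bullet}\otimes I^{\bullet})=I^{\bullet}$ and the map is the identity of $\textnormal{Hom}^{\bullet}(G^{\bullet},I^{\bullet})$, which is a quasi-isomorphism.

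The only substantive external input is that coproducts of injectives in $\textnormal{QCoh}(X)$ are injective when $X$ is Noetherian (this is where Noetherianness enters, and it is what makes $\textnormal{Tot}(V^{\bullet}\otimes I^{\bullet})$ a complex of injectives even when the $V^{j}$ are infinite-dimensional). The remaining work, and the one place where care is genuinely needed, is the sign bookkeeping forced by the totalization convention $\sigma(d^{i})=(-1)^{\deg}d^{i}$: one must check that the displayed arrow is actually a morphism of complexes, that $-\otimes_{k}M$ of a contracting homotopy is again a contracting homotopy after totalization, and that the cone identifications used above carry the correct signs — all routine but worth doing explicitly.
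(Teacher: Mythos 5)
Your proof is correct and follows essentially the same route as the paper's: both use that coproducts of injectives in $\mathrm{QCoh}(X)$ are injective for Noetherian $X$, reduce $V^{\bullet}$ to $H^{\bullet}(V^{\bullet})$ via quasi-isomorphism invariance of the three constructions (the paper by directly citing the quasi-isomorphism $\mathrm{Tot}(V^{\bullet}\otimes I^{\bullet})\simeq\mathrm{Tot}(H^{\bullet}(V^{\bullet})\otimes I^{\bullet})$ of bounded complexes of injectives, you by the explicit splitting $V^{\bullet}\cong H^{\bullet}(V^{\bullet})\oplus C^{\bullet}$ with $C^{\bullet}$ contractible), and then observe the map is an isomorphism once $V^{\bullet}$ is finite-dimensional with zero differential. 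Your final reduction to $V^{\bullet}=k$ in a single degree is a slight elaboration of the paper's one-line appeal to ``tensoring with finite-dimensional vector spaces commutes with $\mathrm{Hom}$ and totalization,'' but it is the same idea, and your flagging of the sign bookkeeping at the end is a reasonable acknowledgment of the routine verification both proofs leave implicit.
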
	
		\begin{proof}
			A direct sum of injective quasi-coherent modules on a Noetherian scheme is an injective quasi-coherent module (Corollary B.2 of \cite{_oupek_2019}). Thus, $\text{Tot}(V^{\bullet}\otimes I^{\bullet})$ and $\text{Tot}(H^{\bullet}(V^{\bullet})\otimes I^{\bullet})$ are quasi-isomorphic bounded-below complexes of injectives, and so $\text{Hom}(G^{\bullet},\text{Tot}(V^{\bullet}\otimes I^{\bullet}))$ is quasi-isomorphic to $\text{Hom}(G^{\bullet},\text{Tot}(H^{\bullet}(V^{\bullet})\otimes I^{\bullet}))$. Finally, since tensoring with finite dimensional vector spaces commute with taking $\text{Hom}$ and totalization, the result follows. 
		\end{proof}
		Now we can construct the projection complex and spectral sequence. 
		\begin{proposition}\label{projection-complex}
			Let $X$ be a variety with $\mathcal{B}=\langle\mathcal{E}_{1},...,\mathcal{E}_{n}\rangle\subset D^{b}_{\textnormal{Coh}}(X)$ an exceptional collection, $i:\mathcal{B}^{\perp}\hookrightarrow D^{b}_{\textnormal{Coh}}(X)$ the inclusion, and $K\in D^{b}_{\textnormal{Coh}}(X)$. Let $\mathcal{E}_{i}^{\bullet}$ and $K^{\bullet}$ be bounded complexes of injective objects of $\textnormal{QCoh}(X)$ quasi-isomorphic to $\mathcal{E}_{i}$ and $K$ respectively. Then the projection $i^{*}K$ is quasi-isomorphic to $\textnormal{Tot}(P^{\bullet,\bullet})$, where $P^{0,\bullet}=K^{\bullet}$ and $P^{-1-p,\bullet}$ for $p\geq0$ is 
			$$\bigoplus_{0\leq a_{0}<...<a_{p}\leq n}\textnormal{Tot}\left(\mathcal{E}_{a_{0}}^{\bullet}\otimes\textnormal{Hom}^{\bullet}(\mathcal{E}_{a_{0}}^{\bullet},\mathcal{E}_{a_{1}}^{\bullet})\otimes...\otimes\textnormal{Hom}^{\bullet}(\mathcal{E}_{a_{p-1}}^{\bullet},\mathcal{E}_{a_{p}}^{\bullet})\otimes\textnormal{Hom}^{\bullet}(\mathcal{E}_{a_{p}}^{\bullet},K^{\bullet})\right)$$
			with maps induced up to a sign by evaluation and composition.
		\end{proposition}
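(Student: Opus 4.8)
The plan is to recognize $i^{*}$ as a composite of left mutations past the $\mathcal{E}_{j}$ and then induct on the length $n$ of the collection, using Lemma~\ref{pullout} to stay inside bounded complexes of injectives at every stage. Write $\mathbb{L}_{\mathcal{E}}$ for the left mutation, which sits in a triangle $\text{RHom}^{\bullet}(\mathcal{E},-)\otimes^{L}\mathcal{E}\xrightarrow{\text{ev}}\text{id}\to\mathbb{L}_{\mathcal{E}}(-)$ and satisfies $\mathbb{L}_{\mathcal{E}}G\in\mathcal{E}^{\perp}$. Since the collection is exceptional, $\text{RHom}^{\bullet}(\mathcal{E}_{k},\mathcal{E}_{j})=0$ for $k>j$, and applying $\text{RHom}^{\bullet}(\mathcal{E}_{k},-)$ to the defining triangle of $\mathbb{L}_{\mathcal{E}_{j}}$ shows that $\mathbb{L}_{\mathcal{E}_{j}}$ preserves $\mathcal{E}_{k}^{\perp}$ whenever $k>j$. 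Hence $\mathbb{L}_{\mathcal{E}_{1}}\mathbb{L}_{\mathcal{E}_{2}}\cdots\mathbb{L}_{\mathcal{E}_{n}}K$ lies in $\bigcap_{j}\mathcal{E}_{j}^{\perp}=\mathcal{B}^{\perp}$, and the cone of the canonical map $K\to\mathbb{L}_{\mathcal{E}_{1}}\cdots\mathbb{L}_{\mathcal{E}_{n}}K$ is an iterated extension of objects of $\langle\mathcal{E}_{1}\rangle,\dots,\langle\mathcal{E}_{n}\rangle$, hence lies in $\mathcal{B}$; by uniqueness of the semiorthogonal triangle for $D^{b}_{\text{Coh}}(X)=\langle\mathcal{B}^{\perp},\mathcal{B}\rangle$ this identifies $i^{*}K\simeq\mathbb{L}_{\mathcal{E}_{1}}\cdots\mathbb{L}_{\mathcal{E}_{n}}K$.

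Next I would build an injective model for a single left mutation. For a bounded complex of injectives $G^{\bullet}$ and an injective resolution $\mathcal{E}^{\bullet}$ of $\mathcal{E}$, the complex $\text{Hom}^{\bullet}(\mathcal{E}^{\bullet},G^{\bullet})$ is bounded with finite-dimensional cohomology computing $\text{RHom}^{\bullet}(\mathcal{E},G)$, so Lemma~\ref{pullout} makes $\text{Tot}(\mathcal{E}^{\bullet}\otimes\text{Hom}^{\bullet}(\mathcal{E}^{\bullet},G^{\bullet}))$ a bounded complex of injectives representing $\text{RHom}^{\bullet}(\mathcal{E},G)\otimes^{L}\mathcal{E}$. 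Consequently $\mathbb{L}_{\mathcal{E}}(G^{\bullet})$ is the mapping cone of $\text{ev}\colon\text{Tot}(\mathcal{E}^{\bullet}\otimes\text{Hom}^{\bullet}(\mathcal{E}^{\bullet},G^{\bullet}))\to G^{\bullet}$, that is, the totalization of the two-column double complex with $G^{\bullet}$ in column $0$ and evaluation the horizontal differential — again a bounded complex of injectives, so the construction iterates.

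Now I would induct on $n$, the case $n=0$ being trivial. For the inductive step, apply the hypothesis to $\langle\mathcal{E}_{2},\dots,\mathcal{E}_{n}\rangle$ to present the projection of $K$ onto $\langle\mathcal{E}_{2},\dots,\mathcal{E}_{n}\rangle^{\perp}$ as $\text{Tot}((P')^{\bullet,\bullet})$, a bounded complex of injectives whose summands are $K^{\bullet}$ together with the terms $\text{Tot}(\mathcal{E}_{a_{0}}^{\bullet}\otimes\text{Hom}^{\bullet}(\mathcal{E}_{a_{0}}^{\bullet},\mathcal{E}_{a_{1}}^{\bullet})\otimes\cdots\otimes\text{Hom}^{\bullet}(\mathcal{E}_{a_{p}}^{\bullet},K^{\bullet}))$ with $2\le a_{0}<\cdots<a_{p}\le n$. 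By the first step $i^{*}K\simeq\mathbb{L}_{\mathcal{E}_{1}}\text{Tot}((P')^{\bullet,\bullet})$, which by the second step is the cone of $\text{ev}\colon\text{Tot}(\mathcal{E}_{1}^{\bullet}\otimes\text{Hom}^{\bullet}(\mathcal{E}_{1}^{\bullet},\text{Tot}((P')^{\bullet,\bullet})))\to\text{Tot}((P')^{\bullet,\bullet})$. Since $\text{Hom}^{\bullet}(\mathcal{E}_{1}^{\bullet},-)$ is additive and, by Lemma~\ref{pullout} again, carries each summand $\text{Tot}(\mathcal{E}_{a_{0}}^{\bullet}\otimes V^{\bullet})$ of $(P')^{\bullet,\bullet}$ to $\text{Tot}(\text{Hom}^{\bullet}(\mathcal{E}_{1}^{\bullet},\mathcal{E}_{a_{0}}^{\bullet})\otimes V^{\bullet})$, tensoring back with $\mathcal{E}_{1}^{\bullet}$ produces exactly the summands of $P^{\bullet,\bullet}$ whose first index is $1$; together with the summands carried over from $\text{Tot}((P')^{\bullet,\bullet})$ (those with all indices $\ge 2$, plus $K^{\bullet}$), every summand of each $P^{-1-p,\bullet}$ occurs once and in the correct homological degree. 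It then remains to match differentials: the horizontal differential of the cone is the position-$0$ evaluation $\mathcal{E}_{1}\otimes\text{Hom}^{\bullet}(\mathcal{E}_{1},-)\to-$; applying $\text{Hom}^{\bullet}(\mathcal{E}_{1}^{\bullet},-)$ to the differential of $\text{Tot}((P')^{\bullet,\bullet})$ — which inductively is built from evaluations and compositions — supplies the remaining composition maps; and the internal differentials of $\mathcal{E}_{1}^{\bullet}$ and of the Hom-complexes combine with those of $(P')^{\bullet,\bullet}$ into the internal differentials of the totalized terms of $P^{\bullet,\bullet}$. This yields $i^{*}K\simeq\text{Tot}(P^{\bullet,\bullet})$.

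The real work is in this last step: re-indexing the summands is routine, but verifying that every evaluation and composition map, together with all the internal differentials, assembles with the precise sign prescribed by the totalization convention $\sigma$ is delicate — which is exactly why the statement only claims the maps are ``induced up to a sign.'' A secondary point not to be skipped is the inductive assertion that each totalization remains a \emph{bounded complex of injectives}, so that the injective model of $\mathbb{L}_{\mathcal{E}_{1}}$ from the second step may legitimately be applied to it; this is precisely what Lemma~\ref{pullout} is designed to provide.
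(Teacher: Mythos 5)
Your argument is correct and follows essentially the same route as the paper's proof: both induct on the length of the collection, both present the inductive step as ``apply the hypothesis to $\langle\mathcal{E}_{2},\dots,\mathcal{E}_{n}\rangle$ and then take the cone of the evaluation map for the remaining $\mathcal{E}_{1}$,'' and both invoke Lemma~\ref{pullout} at exactly the point where $\textnormal{Hom}^{\bullet}(\mathcal{E}_{1}^{\bullet},-)$ must be commuted past the tensor-product totalizations to reassemble the terms of $P^{\bullet,\bullet}$. The only cosmetic difference is that you phrase the projection as an iterated left mutation $\mathbb{L}_{\mathcal{E}_{1}}\cdots\mathbb{L}_{\mathcal{E}_{n}}$ and re-derive that this composite realizes $i^{*}$, whereas the paper simply factors the projection functors as $i'^{*}=i_{0}^{*}\circ i^{*}$ through the chain of inclusions and writes down the terms directly.
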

		\begin{proof}
			More precisely, we will show $P^{-1-p,q}$ is
			$$\bigoplus_{\substack{0\leq a_{0}<...<a_{p}\leq n,\\ k_{0}+...+k_{p}+k=q}}\mathcal{E}_{a_{0}}^{k}\otimes\textnormal{Hom}^{k_{0}}(\mathcal{E}_{a_{0}}^{\bullet},\mathcal{E}_{a_{1}}^{\bullet})\otimes...\otimes\textnormal{Hom}^{k_{a-1}}(\mathcal{E}_{a_{p-1}}^{\bullet},\mathcal{E}_{a_{p}}^{\bullet})\otimes\textnormal{Hom}^{k_{p}}(\mathcal{E}_{a_{p}}^{\bullet},K^{\bullet})$$
			with maps $P^{-1-p,q}\to P^{-p,q}$ induced by the evaluation map
			$$\mathcal{E}_{a_{0}}^{k}\otimes\text{Hom}^{k_{0}}(\mathcal{E}_{a_{0}}^{\bullet},\mathcal{E}_{a_{1}}^{\bullet})\to \mathcal{E}_{a_{1}}^{k+k_{0}},\quad
			e_{a_{0}}\otimes\varphi_{a_{0},a_{1}}\mapsto \varphi_{a_{0},a_{1}}(e_{a_{0}})$$
			and the signed composition map
			$$\text{Hom}^{k_{i}}(\mathcal{E}_{a_{i}}^{\bullet},\mathcal{E}_{a_{i+1}}^{\bullet})\otimes
			\text{Hom}^{k_{i+1}}(\mathcal{E}_{a_{i+1}}^{\bullet},\mathcal{E}_{a_{i+2}}^{\bullet})\to
			\text{Hom}^{k_{i}+k_{i+1}}(\mathcal{E}_{a_{i}}^{\bullet},\mathcal{E}_{a_{i+2}}^{\bullet}),$$
			$$\varphi_{a_{i},a_{i+1}}\otimes\varphi_{a_{i+1},a_{i+2}}\mapsto (-1)^{i+1}\varphi_{a_{i+1},a_{i+2}}\circ\varphi_{a_{i},a_{i+1}} $$
			for maps $\varphi_{a_{i},a_{i+1}}$ and local sections $e_{a_{0}}$. These can be seen to compose to zero.			
			
			To prove the statement we induct on the number of exceptional objects in the collection. The base case follows from the standard projection exact triangle 
			$$\text{Tot}(\mathcal{E}_{1}^{\bullet}\otimes\text{Hom}^{\bullet}(\mathcal{E}_{1}^{\bullet},K^{\bullet}))\to K^{\bullet}\to i^{*}K$$
			noting that the mapping cone of a map of complexes coincides with the totalization as a double complex.
			
			For the inductive step, let $\mathcal{B}'=\langle\mathcal{E}_{0},...,\mathcal{E}_{n}\rangle$ be a larger exceptional collection, and let $i_{0}:\mathcal{B}'^{\perp}\hookrightarrow\mathcal{B}^{\perp}$. Thus, $i\circ i_{0}=i':\mathcal{B}'^{\perp}\hookrightarrow D^{b}_{\text{Coh}}(X)$, and $i'^{*}=i_{0}^{*}\circ i^{*}$. Then assuming $i^{*}K$ can be represented by $\text{Tot}(P^{\bullet,\bullet})$ as above, we will show the analogous claim for $i_{0}^{*}i^{*}K$. We have 
			$$i_{0}^{*}i^{*}K =\text{Tot}(\text{Tot}(\mathcal{E}_{0}^{\bullet}\otimes\text{Hom}^{\bullet}(\mathcal{E}_{0}^{\bullet},\text{Tot}(P^{\bullet,\bullet})))\to\text{Tot}(P^{\bullet,\bullet})).$$
			This complex has the same terms as $\text{Tot}(P'^{\bullet,\bullet})$ with $P'^{-1-p,q}$
			\begin{align*}
				&=
				\bigoplus_{k}\mathcal{E}_{0}^{k}\otimes\text{Hom}^{q-k}(\mathcal{E}_{0}^{\bullet},P^{-p,\bullet})\oplus P^{-1-p,q} \\&=
				\bigoplus_{\substack{0\leq a_{0}<...<a_{p-1}\leq n,\\ k_{0}'+k_{0}+...+k_{p-1}+k=q}}\mathcal{E}_{0}^{k}\otimes\textnormal{Hom}^{k_{0}'}(\mathcal{E}_{0}^{\bullet},\mathcal{E}_{a_{0}}^{\bullet})\otimes\text{Hom}^{k_{0}}(\mathcal{E}_{a_{0}}^{\bullet},\mathcal{E}_{a_{1}}^{\bullet})\otimes...\otimes\textnormal{Hom}^{k_{p-1}}(\mathcal{E}_{a_{p-1}}^{\bullet},K^{\bullet})
				\oplus P^{-1-p,q},
			\end{align*}
			where we use Lemma \ref{pullout} to commute $\text{Hom}^{q-k}(\mathcal{E}^{\bullet},-)$ with the tensor product in $\mathcal{E}_{a_{0}}^{\bullet}\otimes...$.  The last line matches the terms of the desired total complex: the first summand consists of terms containing $\mathcal{E}_{0}$, and the latter consists of terms that do not. 
			
			Finally, we must check that the maps induced by the totalization are as described above; note that the outermost totalization corresponds to the mapping cone. The maps within the latter summand are the same by the inductive hypothesis. The maps from the first summand to the latter are induced by evaluation from the projection triangle defining $i_{0}^{*}(i^{*}K)$. And the maps within the former summand are induced by composition, where the desired signs arise since they are flipped in the source when taking a mapping cone.
		\end{proof}
		\begin{corollary}\label{spectral-sequence}
			There is a spectral sequence converging to $\textnormal{Hom}^{p+q}(i^{*}K',i^{*}K)$ with $E_{1}^{-p-1,q}$ equal to
			$$\bigoplus_{\substack{0\leq a_{0}<...<a_{p}\leq n,\\ k_{0}+...+k_{p}+k=q}}\textnormal{Hom}^{k}(K',\mathcal{E}_{a_{0}})\otimes\textnormal{Hom}^{k_{0}}(\mathcal{E}_{a_{0}},\mathcal{E}_{a_{1}})\otimes...\otimes\textnormal{Hom}^{k_{p-1}}(\mathcal{E}_{a_{p-1}},\mathcal{E}_{a_{p}})\otimes\textnormal{Hom}^{k_{p}}(\mathcal{E}_{a_{p}},K)$$
			
			for $p\geq0$ and $E_{1}^{0,q}=\textnormal{Hom}^{q}(K',K)$, with $d_{1}$ given by signed composition. 
		\end{corollary}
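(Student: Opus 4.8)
The plan is to deduce this directly from Proposition \ref{projection-complex} by applying $\mathrm{RHom}(K',-)$ to the explicit complex computing $i^{*}K$ and running the spectral sequence of the column filtration. First, since $i^{*}$ is left adjoint to $i$, adjunction gives $\mathrm{Hom}^{\bullet}(i^{*}K',i^{*}K)\cong\mathrm{Hom}^{\bullet}_{D^{b}_{\mathrm{Coh}}(X)}(K',i^{*}K)$, so it suffices to compute the right-hand side. By Proposition \ref{projection-complex}, $i^{*}K$ is represented by $\mathrm{Tot}(P^{\bullet,\bullet})$; moreover each term $P^{s,t}$ is a finite direct sum of tensor products of an injective object $\mathcal{E}_{a_{0}}^{k}\in\mathrm{QCoh}(X)$ with finite-dimensional vector spaces, hence is injective (again by Corollary B.2 of \cite{_oupek_2019}), and the total complex is bounded. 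A bounded-below complex of injectives is K-injective, so $\mathrm{Hom}^{\bullet}(K',i^{*}K)$ is computed by the honest Hom-complex $\mathrm{Hom}^{\bullet}(K'^{\bullet},\mathrm{Tot}(P^{\bullet,\bullet}))$ for any bounded complex $K'^{\bullet}$ representing $K'$.

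Next I would filter $\mathrm{Tot}(P^{\bullet,\bullet})$ by the column index (the $0$ or $-1-p$ grading). This is a finite filtration — there are only finitely many columns, each a bounded complex — so it induces a finite filtration on $\mathrm{Hom}^{\bullet}(K'^{\bullet},\mathrm{Tot}(P^{\bullet,\bullet}))$ and hence a spectral sequence converging to $\mathrm{Hom}^{\bullet}(i^{*}K',i^{*}K)$. Its $E_{1}$ page is obtained by applying $\mathrm{Hom}^{\bullet}(K'^{\bullet},-)$ to each column $P^{-1-p,\bullet}$ (and to $P^{0,\bullet}=K^{\bullet}$) and taking cohomology. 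Since $K^{\bullet}$ is a complex of injectives, the column $P^{0,\bullet}$ contributes $H^{q}(\mathrm{Hom}^{\bullet}(K'^{\bullet},K^{\bullet}))=\mathrm{Hom}^{q}(K',K)$, giving the $E_{1}^{0,q}$ term. For a column $P^{-1-p,\bullet}=\bigoplus_{a_{0}<\cdots<a_{p}}\mathrm{Tot}(\mathcal{E}_{a_{0}}^{\bullet}\otimes V^{\bullet}_{a_{\bullet}})$, where $V^{\bullet}_{a_{\bullet}}=\mathrm{Hom}^{\bullet}(\mathcal{E}_{a_{0}}^{\bullet},\mathcal{E}_{a_{1}}^{\bullet})\otimes\cdots\otimes\mathrm{Hom}^{\bullet}(\mathcal{E}_{a_{p}}^{\bullet},K^{\bullet})$ is a bounded complex of $\mathbf{C}$-vector spaces with finite-dimensional cohomology $\mathrm{Hom}^{*}(\mathcal{E}_{a_{0}},\mathcal{E}_{a_{1}})\otimes\cdots\otimes\mathrm{Hom}^{*}(\mathcal{E}_{a_{p}},K)$ (Künneth over $\mathbf{C}$, using that $X$ is smooth projective so all the $\mathrm{Ext}$'s are finite), I would apply Lemma \ref{pullout} with this $V^{\bullet}$, with $I^{\bullet}=\mathcal{E}_{a_{0}}^{\bullet}$ and $G^{\bullet}=K'^{\bullet}$, to get $\mathrm{Hom}^{\bullet}(K'^{\bullet},\mathrm{Tot}(\mathcal{E}_{a_{0}}^{\bullet}\otimes V^{\bullet}_{a_{\bullet}}))\simeq\mathrm{Tot}(V^{\bullet}_{a_{\bullet}}\otimes\mathrm{Hom}^{\bullet}(K'^{\bullet},\mathcal{E}_{a_{0}}^{\bullet}))$. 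Taking cohomology and applying Künneth once more yields $\mathrm{Hom}^{*}(K',\mathcal{E}_{a_{0}})\otimes\mathrm{Hom}^{*}(\mathcal{E}_{a_{0}},\mathcal{E}_{a_{1}})\otimes\cdots\otimes\mathrm{Hom}^{*}(\mathcal{E}_{a_{p}},K)$, which, after summing over $a_{0}<\cdots<a_{p}$ and over the ways of distributing the total degree, is exactly the stated $E_{1}^{-p-1,q}$.

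It then remains to identify $d_{1}$, which is induced by the column differential $P^{-1-p,\bullet}\to P^{-p,\bullet}$. By Proposition \ref{projection-complex} this differential is, up to the indicated signs, an alternating sum of the map contracting $\mathcal{E}_{a_{0}}^{\bullet}\otimes\mathrm{Hom}^{\bullet}(\mathcal{E}_{a_{0}}^{\bullet},\mathcal{E}_{a_{1}}^{\bullet})$ by evaluation and the maps contracting an adjacent pair $\mathrm{Hom}^{\bullet}(\mathcal{E}_{a_{i}}^{\bullet},\mathcal{E}_{a_{i+1}}^{\bullet})\otimes\mathrm{Hom}^{\bullet}(\mathcal{E}_{a_{i+1}}^{\bullet},\mathcal{E}_{a_{i+2}}^{\bullet})$ by composition (the last factor $\mathrm{Hom}^{\bullet}(\mathcal{E}_{a_{p}}^{\bullet},K^{\bullet})$ being composed when $a_{p}$ is contracted). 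Under the identification above, evaluation $\mathcal{E}_{a_{0}}^{\bullet}\otimes\mathrm{Hom}^{\bullet}(\mathcal{E}_{a_{0}}^{\bullet},\mathcal{E}_{a_{1}}^{\bullet})\to\mathcal{E}_{a_{1}}^{\bullet}$ induces on cohomology the Yoneda pairing $\mathrm{Hom}^{*}(K',\mathcal{E}_{a_{0}})\otimes\mathrm{Hom}^{*}(\mathcal{E}_{a_{0}},\mathcal{E}_{a_{1}})\to\mathrm{Hom}^{*}(K',\mathcal{E}_{a_{1}})$, and composition of Hom-complexes induces Yoneda composition on $\mathrm{Ext}$, so $d_{1}$ is the signed composition claimed. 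Convergence is immediate from finiteness of the filtration.

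I expect the only genuine work to be in the middle step: pinning down the precise $E_{1}$ term means verifying the hypotheses of Lemma \ref{pullout} (finite-dimensionality of $H^{\bullet}(V^{\bullet}_{a_{\bullet}})$, where properness of $X$ and coherence of the $\mathcal{E}_{i}$ and $K$ enter) and keeping the grading bookkeeping straight, while the sign appearing in $d_{1}$ must be traced through the mapping-cone construction of Proposition \ref{projection-complex}. Everything else is a formal consequence of that proposition together with the K-injectivity of $\mathrm{Tot}(P^{\bullet,\bullet})$.
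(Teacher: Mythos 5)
Your proposal follows essentially the same route as the paper's proof: adjunction, the representative of $i^{*}K$ from Proposition \ref{projection-complex}, the column filtration of the induced double complex, and Lemma \ref{pullout} plus K\"unneth over $\mathbf{C}$ to identify the $E_1$ terms and $d_1$. You spell out a few steps the paper elides (the K-injectivity of $\mathrm{Tot}(P^{\bullet,\bullet})$, the explicit K\"unneth argument, and finiteness of the filtration for convergence), but the underlying argument is the same.
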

		\begin{proof}
			Note $\text{Hom}^{p+q}(i^{*}K',i^{*}K)=\text{Hom}^{p+q}(K',i^{*}K)$, and let $K'^{\bullet}$ be a bounded complex of quasi-coherent modules be an injective complex of quasi-coherent sheaves quasi-isomorphic to $K'$. Then by Proposition \ref{projection-complex}, $\text{Hom}^{p+q}(K',i^{*}K)$ is the cohomology of $\text{Tot}(H^{\bullet,\bullet})$, with $H^{0,\bullet}=\text{Hom}^{\bullet}(K'^{\bullet},K^{\bullet})$ and $H^{-p-1,\bullet}=$
			$$\bigoplus_{0\leq a_{0}<...<a_{p}\leq n}\text{Tot}\left(\textnormal{Hom}^{\bullet}(K'^{\bullet},\mathcal{E}_{a_{0}}^{\bullet})\otimes\textnormal{Hom}^{\bullet}(\mathcal{E}_{a_{0}}^{\bullet},\mathcal{E}_{a_{1}}^{\bullet})\otimes...\otimes\textnormal{Hom}^{\bullet}(\mathcal{E}_{a_{p-1}}^{\bullet},\mathcal{E}_{a_{p}}^{\bullet})\otimes\textnormal{Hom}^{\bullet}(\mathcal{E}_{a_{p}}^{\bullet},K^{\bullet})\right)$$
			with differentials given by signed composition.
			
			Taking the spectral sequence computing the cohomology of the totalization of this double complex (see \cite{stacks-project} Tag \href{https://stacks.math.columbia.edu/tag/012X}{012X}), the differential of the $E_{0}$ is induced by the differentials of the injective complexes, so turning to the $E_{1}$ page computes the derived $\text{Hom}$'s, yielding the desired terms on the $E_{1}$ page, with differential induced by the horizontal maps of the double complex. 
		\end{proof}
		
		\begin{remark}
			There is also an exact triangle for the right adjoint of $i:\langle\mathcal{E}\rangle^{\perp}\hookrightarrow D^{b}_{\text{Coh}}(X)$ for an exceptional object $\mathcal{E}$:
			$$i^{!}K=\text{Cone}(K\to\text{Hom}(K,\mathcal{E}\otimes\omega_{X})^{\vee}\otimes \mathcal{E}\otimes\omega_{X})[-1]. $$
			We can again iterate this process for an exceptional collection $i:\langle\mathcal{E}_{1},...,\mathcal{E}_{n}\rangle\hookrightarrow D^{b}_{\text{Coh}}(X)$ to get a double complex whose totalization is quasi-isomorphic to $i^{!}K$: 
			$$K^{\bullet}\to
			\bigoplus_{i}\text{Hom}^{\bullet}(K^{\bullet},\mathcal{E}_{i}^{\bullet}\otimes\omega_{X})^{\vee}\otimes\mathcal{E}_{i}^{\bullet}\otimes\omega_{X} \to
			\bigoplus_{i<j}\text{Hom}^{\bullet}(K^{\bullet},\mathcal{E}_{i}^{\bullet}\otimes\omega_{X})^{\vee}\otimes\text{Hom}^{\bullet}(\mathcal{E}_{i}^{\bullet},\mathcal{E}_{j}^{\bullet})^{\vee}\otimes\mathcal{E}_{j}^{\bullet}\otimes\omega_{X}\to... $$
			And we get a spectral sequence computing $\text{Hom}^{*}(i^{!}K,i^{!}K')$ with $E_{1}^{0,q}=\text{Hom}^{q}(K,K')$ and $E_{1}^{p+1,q}$ for $p\geq0$ equal to
			$$\bigoplus_{\substack{0\leq a_{0}<...<a_{p}\leq n,\\ k_{0}+...+k_{p}+k=q}}\text{Hom}^{k_{0}}(K,\mathcal{E}_{a_{0}}\otimes\omega)\otimes\text{Hom}^{k_{1}}(\mathcal{E}_{a_{0}},\mathcal{E}_{a_{1}})\otimes...\otimes\text{Hom}^{k_{p-1}}(\mathcal{E}_{a_{p-1}},\mathcal{E}_{a_{p}})\otimes\text{Hom}^{k_{p}}(\mathcal{E}_{a_{p}}\otimes\omega,K'). $$
		\end{remark}
	\section{A Strong Generator}\label{section-generator}
		The first object we will construct using the projection $i^{*}$ will be a strong generator for $\mathcal{A}$. An interesting result will follow from an analysis of negative self-Homs, towards which we show the following:		
		\begin{lemma}\label{negative-terms}
			Let $\mathcal{F},\mathcal{F}'\in\textnormal{Coh}(X)$. Then $\textnormal{Hom}^{*}(i^{*}\mathcal{F}',i^{*}\mathcal{F})$ is zero for $*<-1$, and 
			$$\textnormal{Hom}^{-1}(i^{*}\mathcal{F}',i^{*}\mathcal{F})=\ker\left(\bigoplus_{i=1}^{13}\textnormal{Hom}^{0}(\mathcal{F}',\mathcal{E}_{i})\otimes\textnormal{Hom}^{0}(\mathcal{E}_{i},\mathcal{F})\to \textnormal{Hom}^{0}(\mathcal{F}',\mathcal{F})\right).$$
		\end{lemma}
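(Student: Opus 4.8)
The plan is to apply the spectral sequence of Corollary \ref{spectral-sequence} with $K'=\mathcal{F}'$ and $K=\mathcal{F}$, and to bound the total degrees in which the $E_{1}$ page can be nonzero. Since $\mathcal{F}$ and $\mathcal{F}'$ sit in cohomological degree $0$ and each $\mathcal{E}_{a}$ is a line bundle, the outer factors $\textnormal{Hom}^{k}(\mathcal{F}',\mathcal{E}_{a_{0}})=\textnormal{Ext}^{k}(\mathcal{F}',\mathcal{E}_{a_{0}})$ and $\textnormal{Hom}^{k_{p}}(\mathcal{E}_{a_{p}},\mathcal{F})=\textnormal{Ext}^{k_{p}}(\mathcal{E}_{a_{p}},\mathcal{F})$ vanish unless $k\geq 0$ and $k_{p}\geq 0$, while for the inner factors $\textnormal{Hom}^{k_{i}}(\mathcal{E}_{a_{i}},\mathcal{E}_{a_{i+1}})$ with $a_{i}<a_{i+1}$, Lemma \ref{vanishing} says they are concentrated in cohomological degree $2$ (or vanish identically). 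Hence a summand of $E_{1}^{-p-1,q}$ is nonzero only if $k\geq 0$, $k_{p}\geq 0$, and $k_{i}=2$ for $0\leq i\leq p-1$, which forces $q=k+k_{0}+\dots+k_{p}\geq 2p$, so the total degree satisfies $-p-1+q\geq p-1\geq -1$. The term $E_{1}^{0,q}=\textnormal{Ext}^{q}(\mathcal{F}',\mathcal{F})$ is likewise supported only in degrees $q\geq 0$. Therefore $E_{1}$, hence $E_{\infty}$, vanishes in total degree $<-1$, which is the first assertion.

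For the second assertion I would isolate total degree $-1$. By the bound just obtained, the only $E_{1}$ entry contributing there is $E_{1}^{-1,0}$: one must take $p=0$, and then $q=0$ forces $k=k_{p}=0$, so
$$E_{1}^{-1,0}=\bigoplus_{i=1}^{13}\textnormal{Hom}^{0}(\mathcal{F}',\mathcal{E}_{i})\otimes\textnormal{Hom}^{0}(\mathcal{E}_{i},\mathcal{F}).$$
Next I would tally the differentials touching this spot. The only outgoing one is $d_{1}\colon E_{1}^{-1,0}\to E_{1}^{0,0}=\textnormal{Hom}^{0}(\mathcal{F}',\mathcal{F})$, the signed composition map; any higher outgoing differential would land in a column of positive first index, and there are none. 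Every potential incoming differential $d_{r}\colon E_{r}^{-1-r,\,r-1}\to E_{r}^{-1,0}$ has source a subquotient of $E_{1}^{-1-r,\,r-1}$, which vanishes since with $p=r$ the inequality $q\geq 2p$ fails for $q=r-1$. Hence $E_{\infty}^{-1,0}=\ker(d_{1})$, and since it is the unique nonzero $E_{\infty}$ term in total degree $-1$, the filtration on $\textnormal{Hom}^{-1}(i^{*}\mathcal{F}',i^{*}\mathcal{F})$ collapses and the group is precisely this kernel (the sign in $d_{1}$ being irrelevant for the kernel), which is the displayed formula.

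The argument is essentially bookkeeping once the spectral sequence is in hand; the step requiring the most care is the exhaustive tally of differentials in and out of $E_{1}^{-1,0}$, needed to conclude that $\textnormal{Hom}^{-1}$ equals the kernel on the nose rather than a proper subquotient. Convergence presents no difficulty, since the double complex $H^{\bullet,\bullet}$ underlying Corollary \ref{spectral-sequence} is bounded: the exceptional collection is finite and the representing complexes $\mathcal{E}_{a}^{\bullet}$, $\mathcal{F}^{\bullet}$, $\mathcal{F}'^{\bullet}$ are bounded complexes of injectives.
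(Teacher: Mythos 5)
Your proof is correct and follows essentially the same approach as the paper's: both apply the spectral sequence of Corollary \ref{spectral-sequence}, use Lemma \ref{vanishing} to pin the middle factors to degree $2$ and the non-negativity of Ext between sheaves to pin the outer factors to non-negative degree, derive the bound $q\geq 2p$ on the $E_{1}$ page, and then check that the only differential touching $E_{r}^{-1,0}$ is $d_{1}$. The paper's proof states these same facts more tersely; your version merely spells out the tally of incoming and outgoing differentials.
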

		\begin{proof}
			Consider the $E_{1}$ page of the spectral sequence of Corollary \ref{spectral-sequence}. Note that all nontrivial Homs between sheaves have non-negative degree and by Lemma \ref{vanishing}, $\text{Hom}^{*}(\mathcal{E}_{i},\mathcal{E}_{j})$ is zero except in degree $2$. Thus, $E_{1}^{p,q}$ is zero for $p>0$, for $p=0,q<0$, for $p=-1,q<0$, and for $p<-1,q<-2p-2$. The only potentially nonzero term with negative total degree is $E_{1}^{-1,0}$, and for degree reasons the only non-zero differential into or out of $E_{r}^{-1,0}$ is $d_{1}:E_{1}^{-1,0}\to E_{1}^{0,0}$, yielding the desired kernel.
		\end{proof}
		
		Now, consider the full exceptional collection:
		$$\langle\mathcal{L}_{1},...,\mathcal{L}_{13}\rangle:=\langle\mathcal{O}_{X},\mathcal{O}_{X}(E_{1}),...,\mathcal{O}_{X}(E_{10}),\mathcal{O}_{X}(F),\mathcal{O}_{X}(2F)\rangle. $$
		
		\begin{lemma}
			$\textnormal{Hom}^{*}(i^{*}\mathcal{L}_{i},i^{*}\mathcal{L}_{j})$ is $0$ for $*<0$.
		\end{lemma}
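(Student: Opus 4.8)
The plan is to reduce, via Lemma \ref{negative-terms}, to a single injectivity statement and then make almost all the terms vanish by a degree count against the pullback $H$ of the hyperplane class. Applying Lemma \ref{negative-terms} with $\mathcal{F}'=\mathcal{L}_{i}$ and $\mathcal{F}=\mathcal{L}_{j}$, the group $\textnormal{Hom}^{*}(i^{*}\mathcal{L}_{i},i^{*}\mathcal{L}_{j})$ already vanishes for $*<-1$, while
$$\textnormal{Hom}^{-1}(i^{*}\mathcal{L}_{i},i^{*}\mathcal{L}_{j})=\ker\Big(\Phi\colon\bigoplus_{k=1}^{13}\textnormal{Hom}^{0}(\mathcal{L}_{i},\mathcal{E}_{k})\otimes\textnormal{Hom}^{0}(\mathcal{E}_{k},\mathcal{L}_{j})\longrightarrow\textnormal{Hom}^{0}(\mathcal{L}_{i},\mathcal{L}_{j})\Big)$$
with $\Phi$ the (signed) composition. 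So it suffices to prove $\Phi$ is injective; since all objects involved are line bundles, each factor is an ordinary space of sections, e.g. $\textnormal{Hom}^{0}(\mathcal{E}_{k},\mathcal{L}_{j})=H^{0}(X,\mathcal{L}_{j}\otimes\mathcal{E}_{k}^{\vee})$.

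The first observation is that every summand with $k\le 12$ vanishes. For such $k$ the bundle $\mathcal{E}_{k}^{\vee}$ is one of $\mathcal{O}_{X}(2F),\mathcal{O}_{X}(F),\mathcal{O}_{X}(D_{1}),\dots,\mathcal{O}_{X}(D_{10})$, whose $H$-degrees are $-38,-19,-6$ respectively, whereas each $\mathcal{L}_{j}$ is one of $\mathcal{O}_{X},\mathcal{O}_{X}(E_{m}),\mathcal{O}_{X}(F),\mathcal{O}_{X}(2F)$, of $H$-degree $\le 0$. Hence $\mathcal{L}_{j}\otimes\mathcal{E}_{k}^{\vee}$ has $H$-degree $\le -6<0$, and since $H$ is nef (it is pulled back from $\mathbf{P}^{2}$) such a line bundle has no global sections. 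Thus $\textnormal{Hom}^{0}(\mathcal{E}_{k},\mathcal{L}_{j})=0$ for $k\le 12$, and the source of $\Phi$ collapses to the single term
$$\textnormal{Hom}^{0}(\mathcal{L}_{i},\mathcal{E}_{13})\otimes\textnormal{Hom}^{0}(\mathcal{E}_{13},\mathcal{L}_{j})=\textnormal{Hom}^{0}(\mathcal{L}_{i},\mathcal{O}_{X})\otimes\textnormal{Hom}^{0}(\mathcal{O}_{X},\mathcal{L}_{j}),$$
using $\mathcal{E}_{13}=\mathcal{O}_{X}$.

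It remains to see $\Phi$ is injective on this term. Here $\textnormal{Hom}^{0}(\mathcal{L}_{i},\mathcal{O}_{X})=H^{0}(X,\mathcal{L}_{i}^{\vee})$: for $\mathcal{L}_{i}=\mathcal{O}_{X}(E_{m})$ this is $0$ because $-E_{m}$ is not linearly equivalent to an effective divisor, and for $\mathcal{L}_{i}\in\{\mathcal{O}_{X}(F),\mathcal{O}_{X}(2F)\}$ it is $0$ because $|{-F}|$ and $|{-2F}|$ are empty for ten general points (Condition \ref{generality} for these classes, which is among the generality hypotheses already used in Proposition \ref{Krah} and Lemma \ref{vanishing}). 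So when $\mathcal{L}_{i}\ne\mathcal{O}_{X}$ the source of $\Phi$ vanishes and $\ker\Phi=0$. When $\mathcal{L}_{i}=\mathcal{O}_{X}$, the term is $\textnormal{Hom}^{0}(\mathcal{O}_{X},\mathcal{O}_{X})\otimes\textnormal{Hom}^{0}(\mathcal{O}_{X},\mathcal{L}_{j})$ and $\Phi$ sends, up to sign, $\mathrm{id}_{\mathcal{O}_{X}}\otimes f$ to $f$, hence is an isomorphism onto $\textnormal{Hom}^{0}(\mathcal{O}_{X},\mathcal{L}_{j})$. In either case $\ker\Phi=0$, so $\textnormal{Hom}^{-1}(i^{*}\mathcal{L}_{i},i^{*}\mathcal{L}_{j})=0$ and the lemma follows.

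There is no real obstacle in this argument: it is a degree count against the nef class $H$ together with the triviality that composing with an identity morphism is an isomorphism. The only points to double-check are the vanishings $H^{0}(\mathcal{O}_{X}(-E_{m}))=H^{0}(\mathcal{O}_{X}(-F))=H^{0}(\mathcal{O}_{X}(-2F))=0$; the latter two are the same generality facts that enter Lemma \ref{vanishing}, so the present lemma inherits exactly those hypotheses and no new ones.
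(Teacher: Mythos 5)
Your proof is correct and follows essentially the same route as the paper: reduce via Lemma \ref{negative-terms} to showing the composition map $\Phi$ is injective, then eliminate almost every summand (here by an $H$-degree count using nefness of $H$). You are in fact slightly more thorough than the paper's terse argument, which only records the surviving summand when $\mathcal{L}_i=\mathcal{L}_j=\mathcal{O}_X$; your case analysis also correctly treats $\mathcal{L}_i=\mathcal{O}_X$ with $\mathcal{L}_j=\mathcal{O}_X(E_m)$, where $\textnormal{Hom}^0(\mathcal{O}_X,\mathcal{O}_X)\otimes\textnormal{Hom}^0(\mathcal{O}_X,\mathcal{O}_X(E_m))$ is one-dimensional and injectivity is again by precomposition with the identity.
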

		\begin{proof}
			By Lemma \ref{negative-terms}, it suffices to examine $d_{1}:E_{1}^{-1,0}\to E_{1}^{0,0}$. If $\mathcal{E}_{i}$ and $\mathcal{L}_{j}$ are distinct sheaves, then $\text{Hom}^{0}(\mathcal{L}_{i},\mathcal{E}_{j})\otimes\text{Hom}^{0}(\mathcal{E}_{j},\mathcal{L}_{i})=0$. The only other case is $\mathcal{L}_{i}=\mathcal{L}_{j}=\mathcal{O}_{X}$, in which case $d_{1}$ is the composition map
			$$\text{Hom}^{0}(\mathcal{O}_{X},\mathcal{O}_{X})\otimes\text{Hom}^{0}(\mathcal{O}_{X},\mathcal{O}_{X})\to\text{Hom}^{0}(\mathcal{O}_{X},\mathcal{O}_{X})$$
			which has trivial kernel.
		\end{proof}
		Now, let $\mathcal{Q}=\bigoplus_{i=1}^{13}i^{*}\mathcal{L}_{i}$.
		\begin{proposition}
			$\mathcal{Q}$ is a strong generator for the phantom with no negative self-Homs. 
		\end{proposition}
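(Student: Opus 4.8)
The statement has two independent parts. The claim about negative self-Homs is immediate: $\mathrm{Hom}^{*}(\mathcal{Q},\mathcal{Q})=\bigoplus_{i,j=1}^{13}\mathrm{Hom}^{*}(i^{*}\mathcal{L}_{i},i^{*}\mathcal{L}_{j})$, and the previous lemma shows that each summand vanishes for $*<0$. So only the strong generation claim requires real argument.

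For that, the plan is to exhibit a strong generator of $D^{b}_{\mathrm{Coh}}(X)$ coming from the full exceptional collection $\langle\mathcal{L}_{1},\dots,\mathcal{L}_{13}\rangle$ and transport it through the exact functor $i^{*}$. Write $G=\mathcal{L}_{1}\oplus\cdots\oplus\mathcal{L}_{13}$, and for an object $T$ let $\langle T\rangle_{m}$ denote the $m$-th thickening of $T$ (closure under shifts, finite direct sums and summands, iterated $m-1$ times under taking cones). First I would observe that fullness of $\langle\mathcal{L}_{1},\dots,\mathcal{L}_{13}\rangle$ means every $K\in D^{b}_{\mathrm{Coh}}(X)$ admits a Postnikov filtration with $13$ graded pieces, each lying in the triangulated subcategory generated by a single $\mathcal{L}_{k}$; since $\mathcal{L}_{k}$ is exceptional, that subcategory consists of finite direct sums of shifts of $\mathcal{L}_{k}$, hence sits inside $\langle G\rangle_{1}$. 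Composing the $13$ cones gives $K\in\langle G\rangle_{13}$, so $D^{b}_{\mathrm{Coh}}(X)=\langle G\rangle_{13}$ and $G$ is a strong generator. Then I would use that $i^{*}\colon D^{b}_{\mathrm{Coh}}(X)\to\mathcal{A}$, being left adjoint to the inclusion of an admissible subcategory, is a triangulated functor commuting with finite direct sums, so $i^{*}(\langle G\rangle_{m})\subseteq\langle i^{*}G\rangle_{m}$ for all $m$; and it is essentially surjective since $i^{*}\circ i\cong\mathrm{id}_{\mathcal{A}}$. As $i^{*}G=\mathcal{Q}$, this yields $\mathcal{A}=i^{*}(D^{b}_{\mathrm{Coh}}(X))\subseteq\langle\mathcal{Q}\rangle_{13}$, and since $\mathcal{A}$ is triangulated, idempotent complete, and contains $\mathcal{Q}$, we get $\mathcal{A}=\langle\mathcal{Q}\rangle_{13}$, i.e.\ $\mathcal{Q}$ is a strong generator of the phantom.

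No step is genuinely hard; the only points that need care are (a) the precise shape of the Postnikov filtration against a full exceptional collection and the resulting uniform bound on the number of cones (namely the length $13$), and (b) the two formal facts that $\langle-\rangle_{m}$ is preserved by exact functors and that a left adjoint to an admissible inclusion is exact and essentially surjective. The geometry of $X$ and the phantom condition play no role here beyond guaranteeing that $\langle\mathcal{L}_{1},\dots,\mathcal{L}_{13}\rangle$ is full and that $\mathcal{A}$ is admissible; the content specific to $\mathcal{Q}$ — non-negativity of self-Homs — is entirely contained in the preceding lemma.
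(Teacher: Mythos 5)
Your proof is correct and follows essentially the same route as the paper, which gives a terse version of exactly this argument (write every object of $\mathcal{A}$ as $i^*K$, build $K$ from the full collection $\langle\mathcal{L}_i\rangle$ in direct sums/cones/summands, and note these operations commute with $i^*$), with the negative-Hom claim delegated to the preceding lemma in both cases. Your write-up usefully makes explicit the uniform bound of $13$ cones that justifies \emph{strong} generation, which the paper leaves implicit.
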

		\begin{proof}
			Every element of the phantom can be written as $i^{*}K$ for $K\in D^{b}_{\text{Coh}}(X)$, each $K$ can be written in terms of direct sums, cones, and summands of the $\mathcal{L}_{i}$'s, and all of these operations commute with $i^{*}$.
		\end{proof}
		\begin{remark}
			In fact, a strong generator of a phantom $\mathcal{A}$ generates it with direct sums and cones and no need for summands. This fact follows from Theorem 2.1 of \cite{Thomason1997}. Indeed, if the triangulated subcategory generated without summands were not all of $\mathcal{A}$, it would be a proper dense subcategory and therefore correspond to a proper subgroup $K_{0}(\mathcal{A})=0$, which is impossible.
		\end{remark}
		This example yields an affirmative answer to a question of Ben Antieau:		
		\begin{theorem}\label{Antieau}
			There exists a co-connective dg-algebra whose derived category is a phantom.
		\end{theorem}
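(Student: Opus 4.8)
The plan is to produce the dg-algebra as the endomorphism dg-algebra of the strong generator $\mathcal{Q}$ inside a dg-enhancement of $D^{b}_{\text{Coh}}(X)$. Concretely, fix a dg-enhancement $\mathcal{D}^{dg}$ of $D^{b}_{\text{Coh}}(X)$ (for instance the standard one coming from injective resolutions, which is already implicit in our computation of $i^{*}$ via injective complexes), let $\mathcal{A}^{dg}\subset\mathcal{D}^{dg}$ be the full dg-subcategory lifting the admissible subcategory $\mathcal{A}=\langle\mathcal{E}_{i}\rangle^{\perp}$, and set $R:=\mathrm{REnd}_{\mathcal{A}^{dg}}(\mathcal{Q})$. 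Since $\mathcal{Q}$ is a strong (classical) generator of $\mathcal{A}$, the functor $\mathrm{RHom}(\mathcal{Q},-)$ induces an equivalence from the triangulated category $\mathrm{Perf}(R)$ onto the thick subcategory generated by $\mathcal{Q}$, which is all of $\mathcal{A}$; this is the standard tilting/generator argument (Keller, or Bondal--Van den Bergh). Hence $D^{b}(\mathcal{A})\simeq \mathrm{Perf}(R)$ is a phantom in the sense that $K_{0}(\mathrm{Perf}(R))=K_{0}(\mathcal{A})=0$.

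The second ingredient is that $R$ is co-connective, i.e. $H^{i}(R)=0$ for $i<0$. But $H^{i}(R)=\mathrm{Hom}^{i}_{\mathcal{A}}(\mathcal{Q},\mathcal{Q})=\mathrm{Hom}^{i}(\mathcal{Q},\mathcal{Q})$, and the preceding Proposition shows exactly that this vanishes for $i<0$. (One should also note $H^{0}(R)=\mathrm{Hom}^{0}(\mathcal{Q},\mathcal{Q})\neq 0$ since $\mathcal{Q}\neq 0$, so $R$ is genuinely nonzero and co-connective rather than trivially so.) Thus $R$ is a co-connective dg-algebra whose perfect derived category $\mathrm{Perf}(R)$ — equivalently its subcategory of compact objects in $D(R)$ — is equivalent to the phantom $\mathcal{A}$.

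In writing this up I would proceed in three short steps: (1) recall the existence and uniqueness (up to quasi-equivalence) of a dg-enhancement of $D^{b}_{\text{Coh}}(X)$ and its admissible subcategories, citing Lunts--Orlov or Bondal--Kapranov; (2) invoke the generator-to-dg-algebra dictionary to get $\mathcal{A}\simeq\mathrm{Perf}(R)$ for $R=\mathrm{REnd}(\mathcal{Q})$, using the Remark above that a strong generator of a phantom actually generates without summands so that $\mathrm{Perf}(R)$ — which is idempotent-complete — still matches $\mathcal{A}$, or alternatively passing to the idempotent completion harmlessly since $K_0=0$ forces no new objects to appear; and (3) read off co-connectivity of $R$ from the self-Hom computation. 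The main obstacle is purely expository: one must be careful that "derived category of a dg-algebra" in the statement means $\mathrm{Perf}(R)$ (compact objects of $D(R)$), and that the equivalence $\mathcal{A}\simeq\mathrm{Perf}(R)$ genuinely respects the triangulated structure and idempotent completeness — the cleanest route is to cite Keller's theorem that for a classical generator $\mathcal{Q}$ of an idempotent-complete triangulated category with a dg-enhancement, $\mathrm{RHom}(\mathcal{Q},-)$ is an equivalence onto $\mathrm{Perf}(\mathrm{REnd}(\mathcal{Q}))$. No nontrivial computation is needed beyond what is already established.
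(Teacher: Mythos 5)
Your proposal is correct and is essentially the same argument as the paper's (very terse) proof: take the endomorphism dg-algebra of $\mathcal{Q}$ in a dg-enhancement, use the strong-generator/Keller dictionary to get $\mathcal{A}\simeq\mathrm{Perf}(R)$, and read off co-connectivity from the vanishing of negative self-Homs. The only addition is your careful handling of idempotent completeness via the Remark preceding the theorem, which the paper implicitly relies on but does not spell out in the proof itself.
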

		\begin{proof}
			In a dg-enhancement of $\mathcal{A}$, consider the Ext-algebra of $\mathcal{Q}$. Since all negative self-Exts of $\mathcal{Q}$ are zero, this will be a co-connective dg-algebra. And since $\mathcal{Q}$ is a strong generator for $\mathcal{A}$, the derived category of this dg-algebra will be $\mathcal{A}$.
		\end{proof} 
		
	\section{Projected Skyscraper Sheaves}\label{section-skyscraper}
		Let $\kappa(x)$ be the sky-scraper sheaf at a closed point $x\in X$.
	\subsection{Homs}		
		We will compute the graded vector spaces $\text{Hom}^{*}(i^{*}\kappa(x),i^{*}\kappa(x))$ and $\text{Hom}^{*}(i^{*}\kappa(x),i^{*}\kappa(y))$. 
		\begin{lemma}\label{Euler-characteristics}
			Denoting $\chi(D):=\chi(H^{*}(X,\mathcal{O}_{X}(D)))$,
			$$\chi(D_{i})=1,\quad
			\chi(F)=3, \quad
			\chi(2F)=6,\quad \chi(F-D_{i})=2, \quad
			\chi(2F-D_{i})=5.$$
		\end{lemma}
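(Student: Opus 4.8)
The plan is to obtain every one of these numbers directly from the Riemann--Roch theorem on the surface $X$. Recall that for a divisor $D$ on a smooth projective surface one has $\chi(\mathcal{O}_X(D)) = \chi(\mathcal{O}_X) + \tfrac12 D\cdot(D - K_X)$, and here $X = \mathrm{Bl}_{10}\mathbf{P}^2$ satisfies $\chi(\mathcal{O}_X) = 1$ and $K_X = -3H + \sum_{i=1}^{10}E_i$, with intersection form $H^2 = 1$, $E_i\cdot E_j = -\delta_{ij}$, $H\cdot E_i = 0$. So each of the five values is a finite arithmetic computation. For instance, writing $D_i = -6H + E_i + 2\sum_{j\neq i}E_j$ one gets $D_i^2 = -1$ and $K_X\cdot D_i = -1$, hence $D_i\cdot(D_i - K_X) = 0$ and $\chi(D_i) = 1$; the other four cases are handled identically.

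A slightly cleaner route, which I would use to organize the write-up, exploits the fact recorded before Condition \ref{generality} that $D_i$ and $F$ are the images of $E_i$ and $H$ under a reflection $r$ of $\mathrm{Pic}(X)$ preserving the intersection product and fixing $K_X$. Then for any $D$ one has $r(D)\cdot\bigl(r(D) - K_X\bigr) = r(D)\cdot r(D) - r(D)\cdot r(K_X) = D\cdot(D - K_X)$, so Riemann--Roch gives $\chi(\mathcal{O}_X(r(D))) = \chi(\mathcal{O}_X(D))$ for every $D$. Applying this to $D = E_i,\ H,\ 2H,\ H - E_i,\ 2H - E_i$ reduces the five assertions to $\chi(E_i) = 1$, $\chi(H) = 3$, $\chi(2H) = 6$, $\chi(H - E_i) = 2$, $\chi(2H - E_i) = 5$. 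The first holds because $\mathcal{O}_X(E_i)$ has $h^0 = 1$ and vanishing higher cohomology; the next two because $\mathcal{O}_X(dH) = \pi^*\mathcal{O}_{\mathbf{P}^2}(d)$ has $\chi = \binom{d+2}{2}$; and the last two are the linear systems of lines, respectively conics, through a single point, which have $h^0 = 2$ and $h^0 = 5$ with no higher cohomology.

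There is essentially no real obstacle here: the statement is bookkeeping. The only two places that merit a sentence of care are (i) confirming that the reflection really does send $E_i\mapsto D_i$ and $H\mapsto F$ for the given formulas, which is a direct check (or one simply bypasses the reflection and runs Riemann--Roch on each class); and (ii), if one takes the second route, noting that the small systems $|H - E_i|$ and $|2H - E_i|$ are non-special for the general points --- this is elementary, and in any case is subsumed by Lemma \ref{standard-form}. Accordingly I would state the Riemann--Roch formula, carry out the computation explicitly for $\chi(D_i) = 1$, and then simply record the remaining four intersection numbers $F^2 = 1$, $K_X\cdot F = -3$, $(F - D_i)^2 = 0$, $K_X\cdot(F - D_i) = -2$, $(2F - D_i)^2 = 3$, $K_X\cdot(2F - D_i) = -5$ (and their consequences for $2F$ by bilinearity), leaving the reader to plug in.
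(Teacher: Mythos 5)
Your proposal is correct and takes essentially the same approach as the paper: a Riemann--Roch computation, simplified by observing that the reflection $E_i\mapsto D_i$, $H\mapsto F$ preserves the intersection form and $K_X$, hence preserves Euler characteristics. (One small aside: since $\chi$ is read off from Riemann--Roch alone, your remark about $|H-E_i|$ and $|2H-E_i|$ being non-special is unnecessary --- that would only matter if you were computing $h^0$ rather than $\chi$.)
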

		\begin{proof}
			Can be checked using the Riemann-Roch Theorem for surfaces, and the computations can be simplified by noting that the reflection $F\mapsto H, D_{i}\mapsto E_{i}$ preserves Euler characteristics.
		\end{proof}
		\begin{proposition}\label{Hom(k(x),k(x))}
			For any $x\in X$,
			$$\textnormal{Hom}^{*}(i^{*}\kappa(x),i^{*}\kappa(x))=\mathbf{C}^{1}[0]\oplus\mathbf{C}^{14}[-1]\oplus\mathbf{C}^{92}[-2]\oplus\mathbf{C}^{139}[-3]\oplus\mathbf{C}^{60}[-4].$$
		\end{proposition}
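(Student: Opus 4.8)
The plan is to apply the projection spectral sequence of Corollary \ref{spectral-sequence} with $K = K' = \kappa(x)$ and the exceptional collection $\langle \mathcal{E}_1, \dots, \mathcal{E}_{13}\rangle = \langle \mathcal{O}_X(-2F), \mathcal{O}_X(-F), \mathcal{O}_X(-D_1), \dots, \mathcal{O}_X(-D_{10}), \mathcal{O}_X\rangle$. The $E_1$-page is built from the pieces $\mathrm{Hom}^{k}(\kappa(x), \mathcal{E}_{a_0})$, the internal Homs $\mathrm{Hom}^{k_i}(\mathcal{E}_{a_i}, \mathcal{E}_{a_{i+1}})$, and $\mathrm{Hom}^{k_p}(\mathcal{E}_{a_p}, \kappa(x))$. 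First I would record the relevant building blocks: by Serre duality and the local freeness of each $\mathcal{E}_i$, $\mathrm{Hom}^{*}(\kappa(x), \mathcal{E}_i) = \mathrm{Hom}^{*}(\mathcal{E}_i, \kappa(x) \otimes \omega_X[2])^{\vee}$, and since $\mathcal{E}_i$ is a line bundle $\mathrm{Hom}^{*}(\mathcal{E}_i, \kappa(x)) = \mathbf{C}[0]$ while $\mathrm{Hom}^{*}(\kappa(x),\mathcal{E}_i) = \mathbf{C}[-2]$ (both one-dimensional, concentrated in a single degree). The internal Homs $\mathrm{Hom}^{*}(\mathcal{E}_i, \mathcal{E}_j)$ for $i<j$ are controlled by Lemma \ref{vanishing}: they vanish for the $D$-to-$D$ entries and otherwise equal $\mathbf{C}^{\chi(\mathcal{E}_j \otimes \mathcal{E}_i^{\vee})}[-2]$, and Lemma \ref{Euler-characteristics} supplies the precise dimensions $\chi(D_i), \chi(F), \chi(2F), \chi(F - D_i), \chi(2F - D_i)$ (and $\chi(2F - F) = \chi(F) = 3$, $\chi$ of the remaining differences of collection members, which are all of the listed forms or forced by $\mathcal{E}_i = \mathcal{O}_X$). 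This is bookkeeping but must be done carefully.

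The key structural observation — exactly as in the proof of Lemma \ref{negative-terms} — is that every nonzero internal $\mathrm{Hom}^{*}(\mathcal{E}_{a_i}, \mathcal{E}_{a_{i+1}})$ sits in degree exactly $2$, the boundary term $\mathrm{Hom}^{*}(\mathcal{E}_{a_p}, \kappa(x))$ in degree $0$, and $\mathrm{Hom}^{*}(\kappa(x), \mathcal{E}_{a_0})$ in degree $2$. Hence on the strand $E_1^{-p-1, q}$ with a chain $a_0 < \dots < a_p$ of length $p+1$, the only nonzero contribution has $q = 2 + 2p + 0 = 2p + 2$, i.e. total degree $-p - 1 + q = p + 1$. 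Since $\mathrm{Hom}^{-1}$ and lower vanish (Lemma \ref{negative-terms}) and $E_1^{0,0} = \mathrm{Hom}^0(\kappa(x),\kappa(x)) = \mathbf{C}$, I would next argue the differentials all vanish for degree reasons: the term contributing to total degree $n \geq 1$ lives only in column $-p-1 = -n$, so $d_r$ (which shifts column by $-r$ and total degree by $+1$) has no room to act nontrivially between these concentrated strands, and the only possible differential touching the degree-$0$ part, $d_1 : E_1^{-1,0} \to E_1^{0,0}$, has source $\mathrm{Hom}^0(\kappa(x), \mathcal{E}_{a_0}) \otimes \mathrm{Hom}^0(\mathcal{E}_{a_0}, \kappa(x))$ with $\mathrm{Hom}^0(\kappa(x), \mathcal{E}_{a_0}) = 0$, so $E_1^{-1,0} = 0$ and $E_2 = E_\infty = E_1$. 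Therefore $\mathrm{Hom}^n(i^*\kappa(x), i^*\kappa(x))$ is just the single surviving $E_1$ term for each $n = 0, 1, 2, 3, 4$, namely the direct sum over strictly increasing chains $a_0 < \dots < a_{n-1}$ in $\{1, \dots, 13\}$ of $\mathrm{Hom}^2(\kappa(x), \mathcal{E}_{a_0}) \otimes \bigotimes_{i} \mathrm{Hom}^2(\mathcal{E}_{a_i}, \mathcal{E}_{a_{i+1}}) \otimes \mathrm{Hom}^0(\mathcal{E}_{a_{n-1}}, \kappa(x))$, which vanishes for $n \geq 5$ since a chain of length $5$ forces a $D$-to-$D$ step (there are only three non-$D$ members, $\mathcal{O}_X(-2F), \mathcal{O}_X(-F), \mathcal{O}_X$, and they occupy the outer two positions of the chain in exactly one order, leaving at most two $D$'s that must be nonadjacent — impossible for length $\geq 5$).

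The remaining work, which I expect to be the main obstacle, is the explicit dimension count: summing $1 \cdot \prod \dim \mathrm{Hom}^2(\mathcal{E}_{a_i}, \mathcal{E}_{a_{i+1}}) \cdot 1$ over all admissible chains of each length, using that a chain contributes zero as soon as it contains two consecutive $D$'s. Concretely, an admissible chain is a sequence that, after the members $\mathcal{O}_X(-2F), \mathcal{O}_X(-F)$ (which can only appear at the start and in that order) and $\mathcal{O}_X$ (only at the end), interleaves at most one $D_i$ between consecutive non-$D$ members; so one enumerates by which of the three non-$D$ members appear, where the (at most three) $D$-slots lie, and which $D_i$ fills each slot, weighting each configuration by the product of the relevant $\chi$'s from Lemma \ref{Euler-characteristics} (e.g. a $D_i$ between $\mathcal{O}_X(-2F)$ and $\mathcal{O}_X(-F)$ contributes $\chi(2F - D_i) \cdot \chi(D_i - F)$ — note $\mathrm{Hom}^2(\mathcal{O}_X(-2F), \mathcal{O}_X(-D_i))$ needs $\chi(2F - D_i)$ and then $\mathrm{Hom}^2(\mathcal{O}_X(-D_i), \mathcal{O}_X(-F))$ needs $\chi(D_i - F)$, but the latter vanishes, so in fact such configurations drop out, and one must track which steps survive Lemma \ref{vanishing}). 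Carrying out this finite but somewhat intricate enumeration degree by degree should yield exactly $1, 14, 92, 139, 60$ in degrees $0, 1, 2, 3, 4$; verifying the total alternating sum is $1 - 14 + 92 - 139 + 60 = 0$, consistent with $\chi(i^*\kappa(x), i^*\kappa(x)) = 0$ forced by $K_0(\mathcal{A}) = 0$, is a useful sanity check on the arithmetic.
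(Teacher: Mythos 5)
Your overall approach — the projection spectral sequence with the collection $\langle\mathcal{E}_1,\dots,\mathcal{E}_{13}\rangle$, the identification of $\mathrm{Hom}^{*}(\kappa(x),\mathcal{E}_i)=\mathbf{C}[-2]$ and $\mathrm{Hom}^{*}(\mathcal{E}_i,\kappa(x))=\mathbf{C}[0]$, the observation that internal Homs sit in degree $2$, and the chain enumeration — is exactly what the paper does. But there is a genuine gap in the middle of your argument, and it is not a bookkeeping detail: you have effectively dropped the $p=0$ column from the spectral sequence. The column $E_1^{0,q}=\mathrm{Hom}^q(\kappa(x),\kappa(x))$ is $\mathbf{C}[0]\oplus\mathbf{C}^2[-1]\oplus\mathbf{C}[-2]$, so total degrees $1$ and $2$ each receive a contribution from column $0$ in addition to the column $-n$ strand you describe. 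Your claim that ``the term contributing to total degree $n\geq 1$ lives only in column $-n$'' is false for $n=1,2$, and this is what lets you conclude incorrectly that $E_1=E_\infty$.

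Once you keep the $E_1^{0,1}=\mathbf{C}^2$ and $E_1^{0,2}=\mathbf{C}$ terms, you will find that if all differentials vanished, degree $1$ would be $\mathbf{C}^{2+13}=\mathbf{C}^{15}$ and degree $2$ would be $\mathbf{C}^{1+92}=\mathbf{C}^{93}$, which contradicts the stated answer (note the alternating-sum check does not catch this, since $1-15+93-139+60=0$ as well). The missing ingredient is that the differential $d_1\colon E_1^{-1,2}\to E_1^{0,2}$ is \emph{nonzero} — in fact surjective — and one must prove it. This is the only nontrivial differential and it is the heart of the computation: the paper shows the evaluation map
$$\bigoplus_{i=1}^{13}\mathrm{Hom}^2(\kappa(x),\mathcal{E}_i)\otimes\mathrm{Hom}^0(\mathcal{E}_i,\kappa(x))\to\mathrm{Hom}^2(\kappa(x),\kappa(x))\cong\mathbf{C}$$
is surjective by a Serre duality argument (each summand's evaluation map is dual to the nonzero scaling map $\mathrm{Hom}^0(\kappa(x),\kappa(x))\to\mathrm{Hom}^0(\mathcal{E}_i,\kappa(x))$). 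With this, $E_2^{0,2}=0$ and $E_2^{-1,2}=\mathbf{C}^{12}$, giving the correct $14=2+12$ in degree $1$ and $92=0+92$ in degree $2$. You need to add both the missing column-zero terms and the surjectivity argument; as written, your structural claim would produce wrong numbers.
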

		
		\begin{proof}
			Let us identify the terms on the $E_{1}$ page of the spectral sequence of Corollary \ref{spectral-sequence}. 
			
			The column $E_{1}^{0,q}$ consists of $\text{Hom}^{*}(\kappa(x),\kappa(x))=\mathbf{C}[0]\oplus\mathbf{C}^{2}[-1]\oplus\mathbf{C}[-2]$, as can be checked using the Koszul resolution of $\kappa(x)$. 
			
			Next, note that $\textnormal{Hom}^{*}(\mathcal{E}_{i},\kappa(x))=\mathbf{C}[0]$ and $\text{Hom}^{*}(\kappa(y),E_{i})=\mathbf{C}[-2]$, which along with Lemma \ref{vanishing} means the expression
			$$\textnormal{Hom}^{k}(\kappa(x),\mathcal{E}_{a_{0}})\otimes\textnormal{Hom}^{k_{0}}(\mathcal{E}_{i_{0}},\mathcal{E}_{a_{1}})\otimes...\otimes\textnormal{Hom}^{k_{p-1}}(\mathcal{E}_{a_{p-1}},\mathcal{E}_{a_{p}})\otimes\textnormal{Hom}^{k_{p}}(\mathcal{E}_{a_{p}},\kappa(x))$$
			is zero unless at most one of $\mathcal{E}_{a_{j}}$ is of the form $\mathcal{O}_{X}(-D_{i})$ and $k=2$, $k_{p}=0$, and $k_{a_{j}}=2$ for $0\leq a_{j}\leq p-1$. In particular, $E_{1}^{p,q}$ is nonzero if and only if $-p\leq 4$ and $q=2p$. Denoting
			$$\rho(\mathcal{E}_{a_{0}},..,E_{a_{p}})=\sum_{j=0}^{p-1}\chi(E_{a_{j+1}})-\chi(E_{a_{j}}), $$
			using Lemma \ref{Euler-characteristics} we compute 
			\begin{align*}
				\dim E_{1}^{-1,2} &= 13 \\
				\dim E_{1}^{-2,4} &=
				10\rho(\mathcal{O}(-D_{i}),\mathcal{O})+
				\rho(\mathcal{O}(-F),\mathcal{O})+
				\rho(\mathcal{O}(-2F),\mathcal{O}) \\&+
				10\rho(\mathcal{O}(-F),\mathcal{O}(-D_{i}))+10\rho(\mathcal{O}(-2F),\mathcal{O}(-D_{i}))+\rho(\mathcal{O}(-2F),\mathcal{O}(-F)) \\&=
				10+3+6+10\cdot 2+10\cdot 5+3 \\&=
				92\\
				\dim E_{1}^{-3,6} &=
				10\rho(\mathcal{O}(-F),\mathcal{O}(-D_{i}),\mathcal{O})+
				10\rho(\mathcal{O}(-2F),\mathcal{O}(-D_{i}),\mathcal{O})\\&+
				\rho(\mathcal{O}(-2F),\mathcal{O}(-F),\mathcal{O})+10\rho(\mathcal{O}(-2F),\mathcal{O}(-F),\mathcal{O}(-D_{i}))\\&=
				10(1\cdot2)+10(1\cdot5)+(3\cdot3)+10(2\cdot3) \\&=
				139\\
				\dim E_{1}^{-4,8} &= 
				10\rho(\mathcal{O}(-2F),\mathcal{O}(-F),\mathcal{O}(-D_{i}),\mathcal{O}) \\&=
				10(1\cdot2\cdot3) \\&=60
			\end{align*}			
			All differentials vanish for degree reasons except for $d_{1}:E_{1}^{-1,2}\to E_{1}^{0,2}$, which is the sum of evaluation maps
			$$\bigoplus_{i=1}^{13}\text{Hom}^{2}(\kappa(x),\mathcal{E}_{i})\otimes\text{Hom}^{0}(\mathcal{E}_{i},\kappa(x))\to\text{Hom}^{2}(\kappa(x),\kappa(x)). $$
			Since the codomain is $\mathbf{C}$, if any of the evaluation maps is nonzero then the whole map is surjective. Indeed, for any element of $\text{Hom}^{0}(\mathcal{E}_{i},\kappa(x))$, which corresponds to an element of the stalk of $\mathcal{E}_{i}$ at $x$, the evaluation map is 
			$$\text{Hom}^{2}(\kappa(x),\mathcal{E}_{i})\to\text{Hom}^{2}(\kappa(x),\kappa(x)), $$ 
			which by Serre duality is dual to 
			$$\text{Hom}^{0}(\mathcal{E}_{i},\kappa(x))\leftarrow\text{Hom}^{0}(\kappa(x),\kappa(x)), $$
			(noting $\kappa(x)\otimes\omega_{X}\cong\kappa(x)$) which is nonzero as it corresponds to scaling the chosen section of $(\mathcal{E}_{i})_{x}$.
			
			Thus, $\dim E_{2}^{-1,2}=12$, $\dim E_{2}^{0,2}=0$, and all other terms are the same as on $E_{1}$, yielding the desired total degrees for the $E_{\infty}$ page.
		\end{proof}
		
		\begin{corollary}\label{Hom1}
			$\textnormal{Hom}^{1}(i^{*}\kappa(x),i^{*}\kappa(x))$ is
			$$\textnormal{Hom}^{1}(\kappa(x),\kappa(x))\oplus\ker\left(\bigoplus_{i=1}^{13}\textnormal{Hom}^{2}(\kappa(x),\mathcal{E}_{i})\otimes\textnormal{Hom}^{0}(\mathcal{E}_{i},\kappa(x))\xrightarrow{\textnormal{ev}}\textnormal{Hom}^{2}(\kappa(x),\kappa(x))\right). $$
		\end{corollary}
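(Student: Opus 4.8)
The plan is to extract the total-degree-$1$ part of the spectral sequence of Corollary \ref{spectral-sequence} directly from the analysis already performed in the proof of Proposition \ref{Hom(k(x),k(x))}. Recall from that proof that, for $K'=K=\kappa(x)$, the nonzero terms of the $E_1$ page occur only in bidegrees $(0,0),(0,1),(0,2)$ and $(-1,2),(-2,4),(-3,6),(-4,8)$, with $E_1^{0,q}=\textnormal{Hom}^q(\kappa(x),\kappa(x))$ and $E_1^{-1,2}=\bigoplus_{i=1}^{13}\textnormal{Hom}^2(\kappa(x),\mathcal{E}_i)\otimes\textnormal{Hom}^0(\mathcal{E}_i,\kappa(x))$. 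The only bidegrees contributing to total degree $1$ are therefore $(0,1)$ and $(-1,2)$.

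First I would pin down the differentials incident to these two terms. By the nonvanishing pattern just recalled, $E_1^{-1,1}=E_1^{-2,2}=0$, so nothing maps into $E_1^{-1,2}$ and nothing maps into or out of $E_1^{0,1}$; the only nonzero differential affecting a term of total degree $1$ is $d_1\colon E_1^{-1,2}\to E_1^{0,2}$, which by Corollary \ref{spectral-sequence} is the signed evaluation/composition map, i.e. exactly the map $\bigoplus_{i=1}^{13}\textnormal{Hom}^2(\kappa(x),\mathcal{E}_i)\otimes\textnormal{Hom}^0(\mathcal{E}_i,\kappa(x))\xrightarrow{\textnormal{ev}}\textnormal{Hom}^2(\kappa(x),\kappa(x))$ appearing in the statement. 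Hence $E_\infty^{0,1}=E_1^{0,1}=\textnormal{Hom}^1(\kappa(x),\kappa(x))$ and $E_\infty^{-1,2}=E_2^{-1,2}=\ker(\textnormal{ev})$, and for degree reasons (and the vanishing of the higher-page analogues of $E_1^{-2,2}$ and $E_1^{-1,1}$) no higher differential can change these.

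Finally, convergence of the spectral sequence gives a two-step filtration on $\textnormal{Hom}^1(i^*\kappa(x),i^*\kappa(x))$ with associated graded pieces $E_\infty^{0,1}$ and $E_\infty^{-1,2}$; since all the vector spaces involved are finite-dimensional over $\mathbf{C}$, this extension splits, yielding the asserted direct sum. I do not expect a genuine obstacle here: the argument reduces to bookkeeping the bidegrees and reusing the identification of $d_1$ made in the proof of Proposition \ref{Hom(k(x),k(x))}; the only point requiring a moment's care is checking that no differential enters $E_1^{-1,2}$, which is immediate from $E_1^{-2,2}=0$.
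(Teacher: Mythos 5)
Your argument is correct and is essentially the paper's intended reasoning: Corollary \ref{Hom1} is stated without a separate proof precisely because it falls out of the $E_1$-page analysis and the identification of $d_1\colon E_1^{-1,2}\to E_1^{0,2}$ as the evaluation map already carried out in the proof of Proposition \ref{Hom(k(x),k(x))}. Your bookkeeping of the bidegrees contributing to total degree $1$, the vanishing of adjacent terms, and the splitting of the filtration over $\mathbf{C}$ reproduces exactly that reasoning.
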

		\begin{corollary}\label{different-points}
			For any $x,y\in X$ with $x\neq y$,
			$$\textnormal{Hom}^{*}(i^{*}\kappa(x),i^{*}\kappa(y))=\mathbf{C}^{13}[-1]\oplus\mathbf{C}^{92}[-2]\oplus\mathbf{C}^{139}[-3]\oplus\mathbf{C}^{60}[-4].$$
		\end{corollary}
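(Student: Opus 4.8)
The plan is to run the spectral sequence of Corollary~\ref{spectral-sequence} with $K'=\kappa(x)$ and $K=\kappa(y)$ for two distinct closed points, and to observe that almost every input coincides with the one used in the proof of Proposition~\ref{Hom(k(x),k(x))}. For $p\geq 0$ the term $E_{1}^{-p-1,q}$ is a direct sum, over strictly increasing index tuples $a_{0}<\dots<a_{p}$ and over $k+k_{0}+\dots+k_{p}=q$, of tensor products
\[
\textnormal{Hom}^{k}(\kappa(x),\mathcal{E}_{a_{0}})\otimes\textnormal{Hom}^{k_{0}}(\mathcal{E}_{a_{0}},\mathcal{E}_{a_{1}})\otimes\dots\otimes\textnormal{Hom}^{k_{p}}(\mathcal{E}_{a_{p}},\kappa(y)),
\]
and since each $\mathcal{E}_{i}$ is a line bundle we have $\textnormal{Hom}^{*}(\kappa(x),\mathcal{E}_{a_{0}})=\mathbf{C}[-2]$ and $\textnormal{Hom}^{*}(\mathcal{E}_{a_{p}},\kappa(y))=\mathbf{C}[0]$ for every closed point (the first by Serre duality together with $\kappa(x)\otimes\omega_{X}\cong\kappa(x)$), while the internal $\textnormal{Hom}^{*}(\mathcal{E}_{a_{i}},\mathcal{E}_{a_{i+1}})$ do not involve $x$ or $y$ at all. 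Hence the dimension count of Proposition~\ref{Hom(k(x),k(x))} applies verbatim: among these terms the only nonzero ones are $E_{1}^{-1,2},E_{1}^{-2,4},E_{1}^{-3,6},E_{1}^{-4,8}$, of dimensions $13,92,139,60$ respectively.

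The one genuine difference lies in the column $p=0$: now $E_{1}^{0,q}=\textnormal{Hom}^{q}(\kappa(x),\kappa(y))=0$ for every $q$, because $x$ and $y$ have disjoint support. In the proof of Proposition~\ref{Hom(k(x),k(x))} the only nonzero differential was $d_{1}\colon E_{1}^{-1,2}\to E_{1}^{0,2}$; with its target now zero, every remaining differential vanishes for degree reasons, since all surviving $E_{1}$-terms sit in bidegrees $(-j,2j)$ for $j=1,\dots,4$ and no $d_{r}$ with $r\geq 1$ can connect two of these. So the spectral sequence degenerates at $E_{1}$, and as those four terms have total degrees $1,2,3,4$ respectively we read off $\textnormal{Hom}^{*}(i^{*}\kappa(x),i^{*}\kappa(y))=\mathbf{C}^{13}[-1]\oplus\mathbf{C}^{92}[-2]\oplus\mathbf{C}^{139}[-3]\oplus\mathbf{C}^{60}[-4]$, as claimed.

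There is no substantive obstacle here: the argument is bookkeeping that recycles the $x=y$ computation. The only point demanding care is to confirm that that computation did not secretly use $x=y$ — it did not, since it rested only on the one-dimensionality of the relevant stalk $\textnormal{Hom}$s, Lemma~\ref{vanishing}, and Lemma~\ref{Euler-characteristics}, none of which sees the chosen points. One could equivalently argue directly from the projection complex of Proposition~\ref{projection-complex}, but reusing the already-computed $E_{1}$ page is cleanest.
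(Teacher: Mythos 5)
Your proposal is correct and matches the paper's proof exactly: both reuse the $E_1$ page from Proposition~\ref{Hom(k(x),k(x))}, observe that $E_1^{0,q}=\textnormal{Hom}^{q}(\kappa(x),\kappa(y))=0$ kills the one nontrivial differential, and conclude that the spectral sequence degenerates. No substantive difference in approach.
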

		\begin{proof}
			The $E_{1}$ page of the spectral sequence of Corollary \ref{spectral-sequence} is the same as in Proposition \ref{Hom(k(x),k(x))}, except $E_{1}^{0,q}=0$ as $\text{Hom}^{*}(\kappa(x),\kappa(y))=0$, so all differentials vanish for degree reasons, and the spectral sequence converges to the desired result.
		\end{proof}
	
	\subsection{Deformations of generic $i^{*}\kappa(x)$}
		The map $p:x\mapsto i^{*}\kappa(x)$ is a morphism from $X$ to the moduli stack of objects in the phantom. Corollary \ref{different-points} shows that it is injective. We now study the image of this map.
	
		Since $\text{Hom}^{*}(i^{*}\kappa(x),i^{*}\kappa(x)$ is zero for $*<0$, we know that $p$ maps into the locus of universally gluable objects (see \cite{Lieblich2006}), which forms an algebraic stack. Moreover, since $\text{Hom}^{0}(i^{*}\kappa(x),i^{*}\kappa(x))=\mathbf{C}$, it maps into the locus of simple objects, which we denote $s\mathcal{M}(\mathcal{A})$, and which is a $\mathbf{G}_{m}$-gerbe. 
		
		Since $\mathcal{A}$ embeds in $D^{b}_{\text{Coh}}(X)$ fully faithfully, $s\mathcal{M}(\mathcal{A})\subset s\mathcal{M}(D^{b}_{\text{Coh}}(X))$ is an open substack, so deformations of objects in $\mathcal{A}$ are still objects in $\mathcal{A}$. However, objects of $\mathcal{A}$ could specialize to objects of $D^{b}_{\text{Coh}}(X)$ not in $\mathcal{A}$.
		
		Since $\text{Hom}^{*}(i^{*}\kappa(x),i^{*}\kappa(x))=\mathbf{C}^{14}$, every $i^{*}\kappa(x)$ has a $14$-dimensional space of infinitesimal deformations. But we will see that many of these deformations are obstructed in a way that depends on the choice of $x$. Studying the hull for deformations will allow us to understand a component of the reduced coarse moduli space of $s\mathcal{M}(\mathcal{A})$. 
		  
		To understand obstruction of deformations, we will need to study composition of $\text{Hom}^{1}$'s. Recalling Corollary \ref{Hom1}, we set the following notation.
		
		Fixing $x\in X$, let $\{s_{1},s_{2}\}$ be a basis for $\text{Hom}^{1}(\kappa(x),\kappa(x))$, let $\xi_{i}$ be a basis for the ($1$-dimensional) space $\text{Hom}^{0}(\mathcal{E}_{i},\kappa(x))\otimes\text{Hom}^{2}(\kappa(x),\mathcal{E}_{i})$ such that all $\xi_{i}$'s have the same image under the composition map to $\text{Hom}^{2}(\kappa(x),\kappa(x))$. Letting $b_{i}=\xi_{i}-\xi_{13}$,
		$$\{s_{1},s_{2},b_{1},...,b_{12}\} $$
		is a basis for $\text{Hom}^{1}(i^{*}\kappa(x),i^{*}\kappa(x))$.
		\begin{lemma}\label{composition}
			The composition map 
			$$\textnormal{Hom}^{0}(\mathcal{E}_{i},\kappa(x))\otimes\textnormal{Hom}^{2}(\kappa(x),\mathcal{E}_{j})\to
			\textnormal{Hom}^{2}(\mathcal{E}_{i},\mathcal{E}_{j}) $$
			is nonzero if and only $\mathcal{E}_{j}^{\vee}\otimes\mathcal{E}_{i}\otimes\omega_{X}$ has global sections and $x$ lies outside the base locus.
		\end{lemma}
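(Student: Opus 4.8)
The plan is to reduce everything to Serre duality and one local computation at $x$. Since $\mathcal{E}_i$ and $\mathcal{E}_j$ are line bundles and $\kappa(x)\otimes\omega_X\cong\kappa(x)$, we have $\textnormal{Hom}^0(\mathcal{E}_i,\kappa(x))\cong(\mathcal{E}_i)_x^{\vee}$, while Serre duality gives $\textnormal{Hom}^2(\kappa(x),\mathcal{E}_j)\cong\textnormal{Hom}^0(\mathcal{E}_j,\kappa(x))^{\vee}\cong(\mathcal{E}_j)_x$ and $\textnormal{Hom}^2(\mathcal{E}_i,\mathcal{E}_j)\cong H^0(X,\mathcal{E}_j^{\vee}\otimes\mathcal{E}_i\otimes\omega_X)^{\vee}$. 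As the first two spaces are one-dimensional, the composition map is nonzero if and only if $\psi\circ\phi\neq0$ for one (equivalently, every) choice of nonzero $\phi\in\textnormal{Hom}^0(\mathcal{E}_i,\kappa(x))$ and $\psi\in\textnormal{Hom}^2(\kappa(x),\mathcal{E}_j)$. So I would fix such $\phi,\psi$ and show that $\psi\circ\phi$, viewed as a linear functional on $H^0(X,\mathcal{E}_j^{\vee}\otimes\mathcal{E}_i\otimes\omega_X)$, is a nonzero scalar multiple of evaluation at $x$; the Lemma follows at once, since such a functional is nonzero exactly when the linear system $|\mathcal{E}_j^{\vee}\otimes\mathcal{E}_i\otimes\omega_X|$ is nonempty and $x$ is not a base point (and when $H^0=0$ the target $\textnormal{Hom}^2(\mathcal{E}_i,\mathcal{E}_j)$ vanishes, so the map is trivially zero, consistently with the statement).

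To identify the functional, take $t\in H^0(X,\mathcal{E}_j^{\vee}\otimes\mathcal{E}_i\otimes\omega_X)=\textnormal{Hom}_X(\mathcal{E}_j,\mathcal{E}_i\otimes\omega_X)$; under Serre duality its pairing with $\psi\circ\phi$ is the image of $t\circ\psi\circ\phi\in\textnormal{Ext}^2_X(\mathcal{E}_i,\mathcal{E}_i\otimes\omega_X)\cong H^2(X,\omega_X)\cong\mathbf{C}$ under the trace. Factor this as $(t\circ\psi)\circ\phi$ and analyze $t\circ\psi\in\textnormal{Ext}^2_X(\kappa(x),\mathcal{E}_i\otimes\omega_X)$. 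Because $\kappa(x)$ is supported at the single point $x$, the local-to-global spectral sequence for $\textnormal{Ext}$ degenerates and $\textnormal{Ext}^q_X(\kappa(x),\mathcal{F})\cong\textnormal{Ext}^q_{\mathcal{O}_{X,x}}(\kappa(x),\mathcal{F}_x)$ for every coherent $\mathcal{F}$; over the regular local ring $R=\mathcal{O}_{X,x}$ of dimension two the Koszul resolution gives $\mathcal{E}xt^2_R(\kappa(x),R)\cong\kappa(x)$, hence $\mathcal{E}xt^2_R(\kappa(x),M)\cong M/\mathfrak{m}_xM$ functorially in free $R$-modules $M$. Applying this to $t_x\colon(\mathcal{E}_j)_x\to(\mathcal{E}_i\otimes\omega_X)_x$ shows that the induced map $\textnormal{Ext}^2_X(\kappa(x),\mathcal{E}_j)\to\textnormal{Ext}^2_X(\kappa(x),\mathcal{E}_i\otimes\omega_X)$ is the reduction of $t_x$ modulo $\mathfrak{m}_x$, i.e. the value $t(x)$ of the section $t$ at $x$ as a homomorphism of fibers. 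In particular $t\circ\psi\neq0$ if and only if $t$ does not vanish at $x$.

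It remains to check that composing with the fixed nonzero $\phi$ does not annihilate a nonzero class, i.e. that
$$\textnormal{Hom}^0_X(\mathcal{E}_i,\kappa(x))\otimes\textnormal{Ext}^2_X(\kappa(x),\mathcal{E}_i\otimes\omega_X)\longrightarrow\textnormal{Ext}^2_X(\mathcal{E}_i,\mathcal{E}_i\otimes\omega_X)\xrightarrow{\ \textnormal{tr}\ }\mathbf{C}$$
given by composition is a perfect pairing of one-dimensional spaces. But this is precisely the Serre duality pairing $\textnormal{Ext}^0_X(\mathcal{E}_i,\kappa(x))\otimes\textnormal{Ext}^2_X(\kappa(x),\mathcal{E}_i\otimes\omega_X)\to H^2(X,\omega_X)$ on the smooth proper surface $X$, which is perfect for any pair of objects. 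Since $\phi\neq0$, the resulting map $\textnormal{Ext}^2_X(\kappa(x),\mathcal{E}_i\otimes\omega_X)\to\mathbf{C}$ is an isomorphism, so $\textnormal{tr}((t\circ\psi)\circ\phi)\neq0$ if and only if $t\circ\psi\neq0$, which by the previous paragraph holds if and only if $t(x)\neq0$. Hence $\psi\circ\phi\neq0$ exactly when $\mathcal{E}_j^{\vee}\otimes\mathcal{E}_i\otimes\omega_X$ admits a section not vanishing at $x$, which is the assertion.

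The main obstacle is making the second paragraph fully rigorous: confirming that the natural transformation $\textnormal{Ext}^2_X(\kappa(x),-)$ induced by a map of line bundles really is ``evaluation at $x$'' (this is where the Koszul computation and the localization of $\textnormal{Ext}$-sheaves are used), and verifying that the pairing invoked in the third paragraph is the composition-and-trace form with compatible sign conventions. Both are standard but deserve care; the rest is bookkeeping with one-dimensional vector spaces.
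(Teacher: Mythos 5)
Your proof follows essentially the same route as the paper: fix a nonzero $\phi\in\textnormal{Hom}^0(\mathcal{E}_i,\kappa(x))$, use Serre duality to identify the resulting map with evaluation of sections of $\mathcal{E}_j^\vee\otimes\mathcal{E}_i\otimes\omega_X$ at $x$, and conclude. The paper's proof states the Serre duality identification in one line, whereas you fill in the supporting details (local-to-global Ext, Koszul resolution, compatibility of duality with composition and trace), but the key idea is identical.
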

		\begin{proof}
			Since $\text{Hom}^{0}(\mathcal{E}_{i},\kappa(x))\cong\mathbf{C}$, this map is the same as 
			$$\text{Hom}^{2}(\kappa(x),\mathcal{E}_{j})\xrightarrow{-\circ\varphi}
			\text{Hom}^{2}(\mathcal{E}_{i},\mathcal{E}_{j})$$
			for any nonzero $\varphi\in\text{Hom}^{0}(\mathcal{E}_{i},\kappa(x))$. By Serre duality, this map is dual to
			$$H^{0}(X,\kappa(x))\leftarrow H^{0}(X,\mathcal{E}_{j}^{\vee}\otimes\mathcal{E}_{i}\otimes\omega_{X})$$
			which corresponds to taking the value of a global section of $\mathcal{E}_{j}^{\vee}\otimes\mathcal{E}_{i}\otimes\omega_{X}$ at $x$, and so is nonzero if and only if there is a global section non-vanishing at $x$. 
		\end{proof}
		Now we can understand the product structure on $\text{Hom}^{1}(i^{*}\kappa(x),i^{*}\kappa(x))$. 
		\begin{lemma}\label{general-compositions}
			For all $x\in X$, we have $s_{i}b_{j}=b_{j}s_{i}=0$ in $\textnormal{Hom}^{2}(i^{*}\kappa(x),i^{*}\kappa(x))$, and for $x$ outside the base locus of all $\mathcal{E}_{j}^{\vee}\otimes\mathcal{E}_{i}\otimes\omega_{X}$, we have $\{b_{i}^{2},b_{i}b_{j}\}_{i<j}$
			is linearly independent and $b_{j}b_{i}=b_{j}^{2}$ for $i<j$.
		\end{lemma}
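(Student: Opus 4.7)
The plan is to compute the Yoneda products using the multiplicative structure on the spectral sequence of Corollary \ref{spectral-sequence}, with calculations organized at the chain level of the projection complex of Proposition \ref{projection-complex}. The Yoneda product $\xi_a \cdot \xi_b$ of two classes in $E_1^{-1, 2}$ lives in $E_1^{-2, 4}$, specifically in the summand indexed by $(a, b)$ (requiring $a < b$), and equals $\beta_a \otimes (\beta_b \circ \alpha_a) \otimes \alpha_b$, where the middle factor is the composition studied in Lemma \ref{composition}. By Lemma \ref{vanishing}, $\textnormal{Hom}^2(\mathcal{E}_b, \mathcal{E}_a) = 0$ whenever $b > a$, and $\textnormal{Hom}^2(\mathcal{E}_a, \mathcal{E}_a) = 0$ for exceptional $\mathcal{E}_a$ (with $H^2(X, \mathcal{O}_X) = 0$ for the last one). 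Thus $\xi_a \cdot \xi_b$ vanishes at the $E_1$ level unless $a < b$, and $\xi_{13} \cdot \xi_j = 0$ for all $j \leq 12$.

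For the vanishing $s_i b_j = b_j s_i = 0$: the classes $s_i \in E_\infty^{0, 1}$ and $b_j \in E_\infty^{-1, 2}$ have Yoneda product in bidegree $(-1, 3)$, but $E_1^{-1, 3} = 0$ because $\textnormal{Hom}^k(\kappa, \mathcal{E}_\ell) \otimes \textnormal{Hom}^{k_0}(\mathcal{E}_\ell, \kappa)$ forces $k + k_0 = 2$. So the associated graded product vanishes and $s_i b_j$ lies in $F^{-2} \textnormal{Hom}^2$. To upgrade this to full vanishing I would compute the secondary Massey product at the chain level: represent $s_i$ as a chain endomorphism of $\textnormal{Tot}(P^{\bullet, \bullet})$ extending the standard representative on $P^{0, \bullet} = \kappa^\bullet$ via the functoriality of $i^*$, and check that the composition with the chain cocycle for $b_j$ is exact in the column $p = -2$. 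The argument for $b_j s_i$ is symmetric.

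For the products $b_i b_j$ and the relation $b_j b_i = b_j^2$: expand $b_i = \xi_i - \xi_{13}$. For $i < j \leq 12$:
\begin{align*}
b_j b_i &= \xi_j \xi_i - \xi_j \xi_{13} - \xi_{13} \xi_i + \xi_{13}^2 = -\xi_j \xi_{13}, \\
b_j^2 &= \xi_j^2 - \xi_j \xi_{13} - \xi_{13} \xi_j + \xi_{13}^2 = -\xi_j \xi_{13},
\end{align*}
using $\xi_j \xi_i = \xi_{13} \xi_i = \xi_{13} \xi_j = \xi_j^2 = \xi_{13}^2 = 0$. Hence $b_j b_i = b_j^2$. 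Linear independence of $\{b_i^2, b_i b_j\}_{i < j}$ is established by tracking the distinct summands of $E_1^{-2, 4}$ in which the components live, after expanding each product in terms of $\xi_a \xi_b$'s; the generic hypothesis on $x$ ensures via Lemma \ref{composition} that the relevant compositions are nonzero in the respective summands. The principal obstacle is rigorously establishing $s_i b_j = 0$ beyond filtration $F^{-2}$, for which the Massey-product computation must be carried out explicitly at the chain level; the secondary claim of linear independence similarly requires care in the degenerate pairs where $\textnormal{Hom}^2(\mathcal{E}_i, \mathcal{E}_j) = 0$ for $i, j \in \{3, \dots, 12\}$.
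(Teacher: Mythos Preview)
Your approach is the same as the paper's---both compute the Yoneda products via the multiplicative structure on the spectral sequence of Corollary~\ref{spectral-sequence}---and your algebra for $b_i b_j$, $b_j b_i$, $b_i^2$ in terms of the $\xi_a \xi_b$'s matches the paper exactly.

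However, you have the filtration running the wrong way in the $s_i b_j$ argument, and this is why you think Massey products are needed when they are not. The filtration is decreasing in $p$, with $F^{p}/F^{p+1}\cong E_\infty^{p,q}$. Since $s_i\in F^{0}\mathrm{Hom}^{1}$ and $b_j\in F^{-1}\mathrm{Hom}^{1}$, multiplicativity gives $s_i b_j\in F^{-1}\mathrm{Hom}^{2}$, not $F^{-2}$. Now from the $E_1$-page computed in Proposition~\ref{Hom(k(x),k(x))} we have $E_\infty^{-1,3}=0$ and $E_\infty^{0,2}=0$ (the latter after the one nonvanishing $d_1$), hence $F^{-1}\mathrm{Hom}^{2}=F^{0}\mathrm{Hom}^{2}=0$. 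So $s_i b_j=0$ on the nose, with no chain-level or secondary computation required. The paper's proof records this as ``both have vanishing codomain,'' which is just the observation that the relevant summands of $E_1^{-1,3}$ are zero; the passage to the actual $\mathrm{Hom}^{2}$ is implicit in the fact that all of $\mathrm{Hom}^{2}$ sits in the single graded piece $E_\infty^{-2,4}$.

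Your caution about the pairs $3\le i<j\le 12$ is well-placed: for those, $\mathrm{Hom}^{2}(\mathcal{E}_i,\mathcal{E}_j)=0$ by Lemma~\ref{vanishing}, so $\xi_i\xi_j=0$ and hence $b_i b_j=-\xi_i\xi_{13}=b_i^{2}$. The paper does not flag this, and strictly speaking the family $\{b_i^{2},\,b_i b_j\}_{i<j}$ is not linearly independent as a list of $78$ vectors; what survives and is genuinely independent is the set supported on the $33$ pairs $(a,b)$ with $\mathrm{Hom}^{2}(\mathcal{E}_a,\mathcal{E}_b)\neq 0$. This does not affect the application in Lemma~\ref{obstructed}, which only needs each $b_i^{2}\neq 0$.
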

		\begin{proof}
			The compositions $s_{i}\xi_{j}$ and $\xi_{j}s_{i}$ are 
			$$\text{Hom}^{1}(\kappa(x),\kappa(x))\otimes\text{Hom}^{2}(\kappa(x),\mathcal{E}_{j})\otimes\text{Hom}^{0}(\mathcal{E}_{j},\kappa(x))\to\text{Hom}^{3}(\kappa(x),\mathcal{E}_{j})\otimes\text{Hom}^{0}(\mathcal{E}_{j},\kappa(x)) $$
			and
			$$\text{Hom}^{2}(\kappa(x),\mathcal{E}_{j})\otimes\text{Hom}^{0}(\mathcal{E}_{j},\kappa(x))\otimes\text{Hom}^{1}(\kappa(x),\kappa(x))\to\text{Hom}^{2}(\kappa(x),\mathcal{E}_{j})\otimes\text{Hom}^{1}(\mathcal{E}_{j},\kappa(x))$$
			respectively, and both have vanishing codomain, which implies the first part.
			
			The composition $\xi_{i}\xi_{j}$ is 
			\begin{align*}
				&\text{Hom}^{2}(\kappa(x),\mathcal{E}_{i})\otimes\text{Hom}^{0}(\mathcal{E}_{i},\kappa(x))\otimes\text{Hom}^{2}(\kappa(x),\mathcal{E}_{j})\otimes\text{Hom}^{0}(\mathcal{E}_{j},\kappa(x))\\&\to
				\text{Hom}^{2}(\kappa(x),\mathcal{E}_{i})\otimes\text{Hom}^{2}(\mathcal{E}_{i},\mathcal{E}_{j})\otimes\text{Hom}^{0}(\mathcal{E}_{j},\kappa(x)) 
			\end{align*}
			induced by the map of Lemma \ref{composition}, which by that lemma is full rank for $i<j$ assuming the hypothesis on $x$, and is zero if $i\geq j$. And by the proof of Proposition \ref{Hom(k(x),k(x))}, for each $i<j$ the codomain is an independent subspace of $\text{Hom}^{2}(i^{*}\kappa(x),i^{*}\kappa(x))$, so $\{\xi_{i}\xi_{j}\}_{i<j}$ is linearly independent. Thus, for $i<j$, we can compute
			\begin{align*}
				b_{i}^{2} &=
				(\xi_{i}-\xi_{13})^{2} \\&=
				\xi_{i}^{2}-\xi_{i}\xi_{13}-\xi_{13}\xi_{i}+\xi_{0}^{2} \\&
				=-\xi_{i}\xi_{13} \\
				b_{i}b_{j} &=
				(\xi_{i}-\xi_{13})(\xi_{j}-\xi_{13}) \\&=
				\xi_{i}\xi_{j}-\xi_{i}\xi_{13}-\xi_{13}\xi_{j}+\xi_{13}^{2} \\&=
				\xi_{i}\xi_{j}-\xi_{i}\xi_{13}
			\end{align*} 
			for which linear independence follows. And we have 
			\begin{align*}
				b_{j}b_{i} &=
				(\xi_{j}-\xi_{13})(\xi_{i}-\xi_{13}) \\&=
				\xi_{j}\xi_{i}-\xi_{j}\xi_{13}-\xi_{13}\xi_{i}+\xi_{13}^{2} \\&=
				-\xi_{j}\xi_{13} \\&=
				b_{j}^{2}.
			\end{align*}
		\end{proof}
		
		Now we turn to the deformation theory of $i^{*}\kappa(x)$. Since $s\mathcal{M}(\mathcal{A})$ is an algebraic stack of finite type over $\mathbf{C}$, for any object $\alpha\in\mathcal{A}$ the deformation functor $F$ of $\alpha$ admits a \emph{hull} for deformations: there exists $R\cong\mathbf{C}[[z_{1},...,z_{n}]]/I$ such that the pro-representable functor $h_{R}$ has a smooth map $h_{R}\to F$ that is an isomorphism on tangent spaces. And then we have $R\cong\widehat{\mathcal{O}}_{sM(\mathcal{A}),[\alpha]}$, where $sM(\mathcal{A})$ is the coarse space of $s\mathcal{M}(\mathcal{A})$ and $[\alpha]$ is the point corresponding to $\alpha$.  
	
		\begin{lemma}\label{obstructed}
			For generic $x\in X$, the reduction of the hull for deformations of $i^{*}\kappa(x)$ is formally smooth and $2$-dimensional, i.e., $\mathbf{C}[[y_{0},y_{1}]]$.
		\end{lemma}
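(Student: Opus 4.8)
The plan is to combine the obstruction calculus coming from the $A_\infty$-structure on $\mathrm{Hom}^*(i^*\kappa(x),i^*\kappa(x))$ with the explicit product computation of Lemma \ref{general-compositions}. Since $\mathrm{Hom}^{<0}(i^*\kappa(x),i^*\kappa(x))=0$, the deformation theory of $\alpha=i^*\kappa(x)$ is governed by the dg/$A_\infty$ Ext-algebra $A=\bigoplus_k\mathrm{Hom}^k(\alpha,\alpha)$, with tangent space $T=\mathrm{Hom}^1(\alpha,\alpha)$ (dimension $14$ by Proposition \ref{Hom(k(x),k(x))}) and obstruction space $O=\mathrm{Hom}^2(\alpha,\alpha)$. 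The hull $R$ is the pro-representing ring $\mathbf{C}[[z_1,\dots,z_{14}]]/I$, where $I$ is generated by the components of the Kuratishi/Maurer--Cartan map $\mathrm{Ob}\colon \widehat{\mathrm{Sym}}(T^\vee)\to O^\vee$ whose leading term is the quadratic form $v\mapsto \tfrac12\,\mu_2(v,v)$ (Massey products contribute only higher-order corrections). So the reduced hull is $(\mathbf{C}[[z_1,\dots,z_{14}]]/I)_{\mathrm{red}}$, and the claim is that for generic $x$ this reduces to $\mathbf{C}[[y_0,y_1]]$, i.e. the cut-out locus has a unique $2$-dimensional component through the origin, smooth after reduction.

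First I would write the quadratic part of the obstruction map explicitly in the basis $\{s_1,s_2,b_1,\dots,b_{12}\}$ of $T$ provided before Lemma \ref{composition}. Introduce dual coordinates, say $u_1,u_2$ dual to $s_1,s_2$ and $w_1,\dots,w_{12}$ dual to $b_1,\dots,b_{12}$. By Lemma \ref{general-compositions}, for generic $x$ we have $s_is_j=0$ (these lie in $\mathrm{Hom}^2(\kappa(x),\kappa(x))$, which has the relevant graded piece but the products vanish since $\mathrm{Hom}^2(\kappa(x),\kappa(x))$ receives no contribution of the right form — more precisely $\mu_2(s_i,s_j)\in\mathrm{Hom}^2(\kappa(x),\kappa(x))$ and the Yoneda square of $\mathrm{Ext}^1$ of a point on a surface lands in $\mathrm{Ext}^2$ but, examined via the Koszul resolution, is the standard skew pairing, hence symmetric part vanishes), $s_ib_j=b_js_i=0$, and the $b_ib_j$ satisfy: $\{b_i^2,\,b_ib_j\}_{i<j}$ linearly independent in $O$ with $b_jb_i=b_j^2$ for $i<j$. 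Consequently the quadratic obstruction, as a vector-valued quadratic form on $T$, depends only on the $w$-coordinates and is, up to the identification $b_ib_j=\xi_i\xi_j-\xi_i\xi_{13}$ from the proof of Lemma \ref{general-compositions}, essentially the ``generic determinantal'' quadratic form $\sum_{i\le j} w_iw_j\,(\text{independent vectors})$ together with the $b_jb_i=b_j^2$ relations. The key point: the only way to kill all these quadratic obstructions to first order is to set all $w_i=0$, leaving $u_1,u_2$ free.

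The core step, then, is the linear-algebra claim: the common zero locus in $T$ of the quadratic obstruction components is exactly the $2$-plane $\{w=0\}=\mathrm{span}(s_1,s_2)$, for generic $x$. Concretely, if $v=\sum a_iu_i+\sum c_jw_j$ (switching to thinking of $v\in T$ with coordinates) then $\mu_2(v,v)$ has $\mathrm{Hom}^2$-component $\sum_{i\le j}c_ic_j\,b_ib_j$ (using $b_jb_i=b_j^2$) which, because $\{b_i^2,b_ib_j\}_{i<j}$ is linearly independent, vanishes iff $c_ic_j=0$ for all $i\le j$, i.e. iff all $c_i=0$. Thus the \emph{quadratic cone} of obstructions already cuts out exactly $\mathrm{span}(s_1,s_2)$. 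Since this cone is reduced and smooth (it is a linear subspace!), and any higher-order corrections to $I$ can only further cut down or thicken in directions transverse to this smooth locus, the scheme-theoretic zero locus $V(I)$ has a unique component through $0$ of dimension $2$, and its reduction is the formal disk $\mathbf{C}[[y_0,y_1]]$ — where $y_0,y_1$ are coordinates along $\mathrm{span}(s_1,s_2)$, precisely the directions of moving the point $x$ on the surface $X$. I would close by noting that these two surviving directions match $\mathrm{Hom}^1(\kappa(x),\kappa(x))=H^0(\mathcal{N}_{x/X})$, i.e.\ geometric deformation of $x$, giving the identification used later in Proposition \ref{birational}.

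The main obstacle is making the ``higher-order corrections only thicken'' step rigorous: a priori the ideal $I$ could contain a cubic whose restriction to $\{w=0\}$ is nonzero, which would \emph{shrink} the $2$-plane. This does not happen because the quadratic part of $I$ already contains the $12$ independent linear-in-$w$-squared forms $c_i^2$ (coming from $b_i^2\neq 0$) and the cross terms $c_ic_j$, so the associated graded of $I$ in the $w$-variables is the full maximal ideal $(w_1,\dots,w_{12})$ to order $2$; hence $\sqrt{I}\supseteq(w_1,\dots,w_{12})$ and $R_{\mathrm{red}}=\mathbf{C}[[u_1,u_2]]$ with no room for further relations. One must also verify the genericity hypothesis is the one already isolated in Lemma \ref{general-compositions} — that $x$ avoids the base loci of all the finitely many line bundles $\mathcal{E}_j^\vee\otimes\mathcal{E}_i\otimes\omega_X$ — and check that $\mu_2(s_i,s_j)=0$ genuinely holds (the vanishing of the symmetric Yoneda square on $\mathrm{Ext}^1(\kappa(x),\kappa(x))$), which is a short Koszul-complex computation but needs to be stated.
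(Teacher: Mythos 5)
Your proposal follows the same high-level route as the paper: identify the hull $R=S/I$ with $S=\mathbf{C}[[y_0,y_1,w_1,\dots,w_{12}]]$, compute the quadratic part of the Kuranishi map from Lemma \ref{general-compositions}, conclude that the quadratic obstructions kill the $w$-directions, and combine with the unobstructedness of the $s_1,s_2$-directions. The framing via the $A_\infty$ Ext-algebra is equivalent to the paper's argument through $(T^2)^\vee\to I/\mathfrak{m}I\to\mathfrak{m}^2/\mathfrak{m}^3$.

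However, there is a genuine gap in the final deduction. You claim that because $I$ contains elements with leading terms spanning all $w_iw_j$ modulo $\mathfrak{m}^3$, it follows that $\sqrt{I}\supseteq(w_1,\dots,w_{12})$. This does not follow: the higher-order corrections can involve $y_0,y_1$ in a way that prevents the $w_i$ from being nilpotent. Concretely, in $S=\mathbf{C}[[y_0,y_1,w]]$ take $I=(w^2+wy_0^2,\; wy_1^2)$; then $I\subseteq(w)$ and $I/\mathfrak{m}^3$ contains $w^2$, yet $V(I)=V(w)\cup V(w+y_0^2,\,y_1)$, so $w\notin\sqrt{I}$ and $(S/I)_{\mathrm{red}}\neq\mathbf{C}[[y_0,y_1]]$. (You even flag this as ``the main obstacle,'' but the worry you then address --- a cubic shrinking the $\{w=0\}$ plane --- is the easy direction, ruled out because the unobstructed family over $\hat{\mathcal{O}}_{X,x}$ forces $I\subseteq(w_1,\dots,w_{12})$; the hard direction is showing the $w$-locus is actually killed.) The paper's proof takes a different route here: it lifts $w_iw_j\in I/\mathfrak{m}^3$ to $I'=(I,y_0,y_1)$ by an inductive $\mathfrak{m}$-adic argument, concludes $S/I'\cong R/(y_0,y_1)$ is Artinian, and hence $\dim R\le 2$ by the dimension formula; combined with $\dim R\ge 2$ from the surjection $R\twoheadrightarrow\mathbf{C}[[y_0,y_1]]$ this pins down the dimension. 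You should replace the ``$\sqrt{I}\supseteq(w_i)$'' step with this Artinian/dimension-counting argument (or some other argument that actually engages the higher-order terms).

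A secondary point: your treatment of the $s_1,s_2$-directions via a direct computation that $\mu_2(s_i,s_j)=0$ (skew Yoneda pairing on $\mathrm{Ext}^1(\kappa(x),\kappa(x))$) is correct but only controls the quadratic obstruction, not higher-order ones. The paper instead cites the geometric fact that deforming $x\in X$ gives an honest flat family over $\hat{\mathcal{O}}_{X,x}\cong\mathbf{C}[[y_0,y_1]]$, hence (by versality of the hull) a surjection $R\twoheadrightarrow\mathbf{C}[[y_0,y_1]]$. This is both simpler and stronger, and is in fact what you need to normalize coordinates so that $I\subseteq(w_1,\dots,w_{12})$.
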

		\begin{proof}
			Denote $S:=k[[y_{0},y_{1},x_{1},...,x_{12}]]$, where $y_{0}, y_{1}$ are dual to the tangent vectors $s_{1}, s_{2}$, and $x_{1},...,x_{12}$ are dual to the tangent vectors $b_{1},...,b_{12}$. The hull is of the form $R:=S/I$, with $I\subset\mathfrak{m}^{2}$ for $\mathfrak{m}\subset S$ the maximal ideal. We know that deformations in the directions $s_{1},s_{2}$ are unobstructed as they correspond to deforming $x\in X$. We will show that all deformations in the $b_{i}$ directions are obstructed.
			
			Denote $T^{i}:=\text{Hom}^{j}(i^{*}\kappa(x),i^{*}\kappa(x))$, and note that $T^{1}\cong(\mathfrak{m}/\mathfrak{m}^{2})^{\vee}$ is the tangent space. Since $R$ is the hull, there is a deformation over $R$ that cannot be lifted further. In particular, there is a non-trivial obstruction to lifting along the quotient map
			$$S/\mathfrak{m}I\to R $$
			whose kernel is $I/\mathfrak{m}I$. This obstruction is a nonzero element of $T^{2}\otimes I/\mathfrak{m}I$ corresponding to a surjective map $(T^{2})^{\vee}\to I/\mathfrak{m}I$ (though we will not end up needing that it is surjective). 
			
			There is a natural map induced by inclusion $I/\mathfrak{m}I\to\mathfrak{m}^{2}/\mathfrak{m}^{3}$, and we have $\mathfrak{m}^{2}/\mathfrak{m}^{3}\cong\text{Sym}^{2}(\mathfrak{m}/\mathfrak{m}^{2})\cong\text{Sym}^{2}(T^{1})^{\vee}$. It follows from Section 2.3 of \cite{remarks}, in the last paragraph on the quadratic part of the Kuranishi map, that the composition
			$$\begin{tikzcd}
				(T^{2})^{\vee} \arrow[r] \arrow[d] & \text{Sym}^{2}(T^{1})^{\vee}\cong\mathfrak{m}^{2}/\mathfrak{m}^{3} \\
				I/\mathfrak{m}I \arrow[ur]          &            
			\end{tikzcd}$$
			is dual to
			$$\text{Sym}^{2}(T^{1})\to T^{2},\quad tt'\mapsto t\circ t'+t'\circ t,$$
			which by Lemma \ref{general-compositions} sends $b_{i}b_{j}\mapsto b_{j}^{2}+b_{i}b_{j}$ for $i<j$.
			
			Letting $\{\overline{f}_{i},\overline{f}_{i,j}\}_{i<j}$ be basis elements of $(T^{2})^{\vee}$ dual to $\{b_{i}^{2},b_{i}b_{j}\}_{i<j}\subset T^{2}$, their images $f_{i},f_{i,j}\in \mathfrak{m}^{2}/\mathfrak{m}^{3}$ are 
			$$f_{i}=2x_{i}^{2}+\sum_{i<j}x_{i}x_{j}+\mathfrak{m}^{3}, \quad f_{i,j}=x_{i}x_{j}+\mathfrak{m}^{3}.$$   
			Taking the preimages of these elements in $I/\mathfrak{m}I$, it follows that $I/\mathfrak{m}^{3}$ contains all $x_{i}^{2}$ and $x_{i}x_{j}$. 
			
			Let $I':=(I,y_{0},y_{1})\subset S$. We can see $x_{i}^{2}\in I'=(I,y_{0},y_{1})$ by inductively showing $x_{i}^{2}\in I'/\mathfrak{m}^{k}$, noting that the highest order term can be killed off up to higher order terms. Thus, $S/I'\cong R/(y_{0},y_{1})$ is zero dimensional, so by \cite{stacks-project}[00KW] $R$ is at most $2$-dimensional. And as noted above, it is at least $2$-dimensional, so $S/\sqrt{I}$ is a regular complete local ring of dimension $2$.
		\end{proof}
		Now the $p:X\to\mathcal{A}$ recovers the following.
		\begin{proposition}\label{birational}
			Let $sM(\mathcal{A})$ be the reduced coarse moduli space of $s\mathcal{M}(\mathcal{A})$. Then an irreducible component of $sM(\mathcal{A})$ is birational to $X$.
		\end{proposition}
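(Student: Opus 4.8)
The plan is to upgrade the set-theoretic map $p\colon x\mapsto i^{*}\kappa(x)$ to an honest morphism $\bar p\colon X\to sM(\mathcal{A})$ and then show it is birational onto an irreducible component. First I would argue that $p$ is a morphism of algebraic stacks: realizing the projection $i^{*}$ by a Fourier--Mukai kernel $\mathcal{K}\in D^{b}_{\textnormal{Coh}}(X\times X)$ in a dg-enhancement (as in \cite{Kuznetsov2012}), the kernel $\mathcal{K}$ is itself a family over $X$ of objects of $\mathcal{A}$ whose fibre over $x$ is $i^{*}\kappa(x)$; since each fibre is simple with vanishing negative self-$\textnormal{Hom}$'s (Proposition \ref{Hom(k(x),k(x))}), this family lands in $s\mathcal{M}(\mathcal{A})$, and composing with the coarsening/rigidification map gives $\bar p\colon X\to sM(\mathcal{A})$. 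By Corollary \ref{different-points}, $\bar p$ is injective on closed points, since distinct skyscrapers have non-isomorphic (indeed $\textnormal{Hom}^{0}$-orthogonal) projections.

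Next I would pin down the image. As $\bar p$ is a finite-type morphism injective on closed points, all of its fibres are $0$-dimensional, so $\dim\overline{\bar p(X)}=\dim X=2$; and since $X$ is irreducible, $\overline{\bar p(X)}$ is an irreducible closed subset of $sM(\mathcal{A})$ of dimension $2$. To identify it with an irreducible component I would invoke Lemma \ref{obstructed}: for $x$ in a dense open of $X$, the reduced hull for deformations of $i^{*}\kappa(x)$ is $\mathbf{C}[[y_{0},y_{1}]]$, and because $sM(\mathcal{A})$ is of finite type over $\mathbf{C}$ (hence excellent, so forming the reduced coarse space commutes with completion) we get $\widehat{\mathcal{O}}_{sM(\mathcal{A}),\bar p(x)}\cong\mathbf{C}[[y_{0},y_{1}]]$. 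Thus such a point $\bar p(x)$ is a regular point of $sM(\mathcal{A})$ of local dimension $2$, so it lies on a unique irreducible component $Z$, which therefore has $\dim Z=2$. Since $\bar p(x)\in\overline{\bar p(X)}$ and $\overline{\bar p(X)}$ is irreducible, $\overline{\bar p(X)}\subseteq Z$; comparing dimensions gives $\overline{\bar p(X)}=Z$, so $\bar p\colon X\to Z$ is dominant.

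Finally, for birationality: $sM(\mathcal{A})$ is reduced by definition, so $Z$ is an integral $\mathbf{C}$-variety, and $\bar p\colon X\to Z$ is a dominant morphism of integral varieties of equal dimension. Hence $\mathbf{C}(Z)\hookrightarrow\mathbf{C}(X)$ is a finite field extension, separable since we are in characteristic $0$, so its degree equals the number of points of a generic fibre of $\bar p$; this is $1$ by injectivity on closed points. Therefore $\mathbf{C}(X)=\mathbf{C}(Z)$ and $\bar p$ is birational onto the irreducible component $Z$ of $sM(\mathcal{A})$.

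The substantive input here is already Lemma \ref{obstructed}; the work in this proposition is bookkeeping. I expect the main obstacle to be the technical matching of the formal-local computation with the global geometry: verifying that $p$ really is a morphism of stacks and, crucially, that the hull $R\cong\widehat{\mathcal{O}}_{s\mathcal{M}(\mathcal{A}),[\alpha]}$ computed in Lemma \ref{obstructed} genuinely governs the completed local ring of the \emph{reduced} coarse space $sM(\mathcal{A})$ after rigidifying the $\mathbf{G}_{m}$-gerbe. Once that is in place, the dimension count and the degree-one argument are routine.
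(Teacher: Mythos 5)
Your proof is correct and rests on the same key input as the paper's, namely Lemma \ref{obstructed}, but the endgame is packaged differently. The paper's proof is essentially a two-sentence assertion: since for general $x$ the only unobstructed deformations of $i^{*}\kappa(x)$ come from deforming $x$, the restriction $p|_{U}$ to the dense open $U$ of such $x$ is an open immersion, hence its image is dense in a component. Implicit there is the stronger fact that $p$ induces an isomorphism on completed local rings at such $x$ (the induced surjection $\widehat{\mathcal{O}}_{X,x}\twoheadrightarrow R^{\mathrm{red}}$ between two-dimensional regular complete local rings must be an isomorphism), and that an \'etale universally injective morphism is an open immersion. Your route instead deduces birationality from a dimension-plus-degree count: you use only the weaker consequence of Lemma \ref{obstructed} that $R^{\mathrm{red}}\cong\mathbf{C}[[y_{0},y_{1}]]$ is regular of dimension two (so $\bar p(x)$ lies on a unique two-dimensional component $Z$), together with injectivity of $\bar p$ on closed points from Corollary \ref{different-points} (so the general fibre over $Z$ is a singleton and the degree is one). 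This avoids having to match the two unobstructed tangent directions with the image of $T_{x}X$, at the cost of the degree argument. You also flag, and handle, two technicalities the paper leaves implicit: that $p$ is genuinely a morphism of stacks (via a Fourier--Mukai kernel for $i^{*}$), and that completion commutes with taking the reduced coarse space because $sM(\mathcal{A})$ is excellent. Both arguments are valid; yours is more verbose but more self-contained.
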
 
		\begin{proof}
			By Lemma \ref{obstructed}, for general $x\in X$, the only unobstructed deformations of $i^{*}\kappa(x)$ are those corresponding to deformations of $x\in X$. Let $U$ be the locus of such $x$. Then $p|_{U}$ is an open immersion, and its image is dense in an irreducible component.  
		\end{proof}
		For a special point $x\in X$ to which Lemma \ref{obstructed} does not apply, $i^{*}\kappa(x)$ may have additional deformations not corresponding to deformations of $x$. That would mean the irreducible component of $sM(\mathcal{A})$ containing $p(X)$ intersects other components of $sM(\mathcal{A})$. We ask the following:
		
		\begin{question}\label{mutant}
			Can we construct a simple universally gluable object of $\mathcal{A}$ that deforms to $i^{*}\kappa(x)$ for some $x\in X$ but is not itself of this form?
		\end{question}
		
		Furthermore, $p$ may fail to be closed immersion, as $sM(\mathcal{A})$ may be non-separated, raising the following:  
		\begin{question}
			Can we find a specialization of $i^{*}\kappa(x)$ for $x\in X$ to a simple universally gluable object of $\mathcal{A}$ not of this form? 
		\end{question}

	\subsection{Special loci}
	
		Now we study the points $x\in X$ which could have additional unobstructed deformations. 
		
		\begin{definition}
			Say $x\in X$ lies in the \emph{special locus of $X$} if the rank of the product map
			$$\text{Hom}^{1}(i^{*}\kappa(x),i^{*}\kappa(x))\otimes\text{Hom}^{1}(i^{*}\kappa(x),i^{*}\kappa(x))\to\text{Hom}^{2}(i^{*}\kappa(x),i^{*}\kappa(x))$$
			is less than in the general case. 
		\end{definition}
		\begin{remark}
			Our tools are not sufficient to show for $x$ in the special locus of $X$ that $i^{*}\kappa(x)$ actually has additional deformations, as asked in Question \ref{mutant}.
		\end{remark}
	
				
		\begin{lemma}\label{linear-systems}
			The special locus of $X$ consists of the base loci of the following linear systems: $|-K_{X}+E_{i}|$, $|K_{X}-F|$, $|K_{X}-2F|$, $|K_{X}-F+D_{i}|$, and $|K_{X}-2F+D_{i}|$.
		\end{lemma}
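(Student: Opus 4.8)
The plan is to redo the product-structure computation from the proof of Lemma \ref{general-compositions}, now without assuming $x$ is generic, tracking precisely which of the concatenation products vanish. First I would pin down the image of the product map $\textnormal{Hom}^1(i^*\kappa(x),i^*\kappa(x))^{\otimes 2}\to\textnormal{Hom}^2(i^*\kappa(x),i^*\kappa(x))$. By Corollary \ref{Hom1} we may write $\textnormal{Hom}^1(i^*\kappa(x),i^*\kappa(x))=\langle s_1,s_2\rangle\oplus W$, where $W=\langle b_1,\dots,b_{12}\rangle$ is the kernel appearing there. The summand $\langle s_1,s_2\rangle$ contributes nothing to products, for every $x$: the cross terms $s_ib_j=b_js_i=0$ by Lemma \ref{general-compositions}, while $s_is_j$ is the image of an element of $\textnormal{Hom}^2(\kappa(x),\kappa(x))$ under the natural ring map to $\textnormal{Hom}^2(i^*\kappa(x),i^*\kappa(x))$, and this map vanishes in degree $2$ because $d_1\colon E_1^{-1,2}\to E_1^{0,2}$ is surjective (proof of Proposition \ref{Hom(k(x),k(x))}). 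So the image of the product map equals the image of $W\otimes W$. Since all thirteen evaluation maps into $E_1^{0,2}$ are nonzero and the $\xi_i$ are normalized to have equal image, $W$ is the hyperplane $\{\sum_i a_i=0\}$ inside $\bigoplus_i\mathbf{C}\xi_i$; an elementary manipulation using $b_i=\xi_i-\xi_{13}$ then identifies the image of $W\otimes W$ with the span of the individual concatenation products $\xi_i\xi_j$ for $1\le i<j\le 13$.

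Next I would compute the rank. As in the proof of Lemma \ref{general-compositions} — using multiplicativity of the spectral sequence of Corollary \ref{spectral-sequence} and the vanishing of its higher differentials — the product $\xi_i\xi_j$ for $i<j$ is a single vector lying in the direct summand $B_{ij}$ of $\textnormal{Hom}^2(i^*\kappa(x),i^*\kappa(x))=E_\infty^{-2,4}$ indexed by the pair, and distinct summands $B_{ij}$ are linearly independent. By Lemma \ref{composition}, $\xi_i\xi_j$ is nonzero precisely when $\textnormal{Hom}^2(\mathcal{E}_i,\mathcal{E}_j)\ne 0$ and $x$ lies outside the base locus of $\mathcal{E}_j^\vee\otimes\mathcal{E}_i\otimes\omega_X$. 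Hence the rank equals the number of pairs $i<j$ with $\textnormal{Hom}^2(\mathcal{E}_i,\mathcal{E}_j)\ne 0$ for which $x$ avoids that base locus. Each such base locus is a proper closed subset of $X$, since the bundle has a global section, so the generic value of the rank is attained off a proper closed subset, and the special locus of $X$ is exactly the union of the base loci of $\mathcal{E}_j^\vee\otimes\mathcal{E}_i\otimes\omega_X$ over all $i<j$ with $\textnormal{Hom}^2(\mathcal{E}_i,\mathcal{E}_j)\ne 0$.

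Finally I would identify the divisor classes. By Lemma \ref{vanishing}, together with the positivity of the Euler characteristics in Lemma \ref{Euler-characteristics}, $\textnormal{Hom}^2(\mathcal{E}_i,\mathcal{E}_j)\ne 0$ for all $i<j$ except when $\mathcal{E}_i$ and $\mathcal{E}_j$ are both of the form $\mathcal{O}_X(-D_\bullet)$, where it vanishes. Writing $\mathcal{E}_k=\mathcal{O}_X(L_k)$, so that $\mathcal{E}_j^\vee\otimes\mathcal{E}_i\otimes\omega_X=\mathcal{O}_X(L_i-L_j+K_X)$, the surviving pairs yield exactly $K_X-F$ (from $(\mathcal{O}(-2F),\mathcal{O}(-F))$ and $(\mathcal{O}(-F),\mathcal{O}_X)$), the classes $K_X-2F+D_i$ (from $(\mathcal{O}(-2F),\mathcal{O}(-D_i))$), $K_X-2F$ (from $(\mathcal{O}(-2F),\mathcal{O}_X)$), the classes $K_X-F+D_i$ (from $(\mathcal{O}(-F),\mathcal{O}(-D_i))$), and the classes $K_X-D_i$ (from $(\mathcal{O}(-D_i),\mathcal{O}_X)$); the last equal $-K_X+E_i$ by the identity $K_X-D_i=-K_X+E_i$, which is immediate from $D_i=-6H+2\sum_jE_j-E_i$ and $K_X=-3H+\sum_jE_j$. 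This is precisely the asserted list.

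The main obstacle is the middle step: showing that the degree-$1$ product map is computed on $E_\infty$ by concatenation into the linearly independent blocks $B_{ij}$, with $\xi_i\xi_j$ a single vector whose (non)vanishing is governed by Lemma \ref{composition}. This is essentially the content of the proof of Lemma \ref{general-compositions}, so the real work is to rerun that argument uniformly in $x$ and to keep the bookkeeping straight — that $W$ surjects onto each coordinate line $\mathbf{C}\xi_i$, so that no product $\xi_i\xi_j$ is missed from the image of $W\otimes W$, and that a product $\xi_i\xi_j$ cannot vanish for any reason other than the one in Lemma \ref{composition}.
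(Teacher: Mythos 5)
Your proof is correct and takes essentially the same approach as the paper, whose proof is the single sentence that these are precisely the points where the hypotheses of Lemma \ref{general-compositions} fail. You spell out what the paper leaves implicit — that the rank of the product map is a count of nonvanishing concatenations $\xi_i\xi_j$, which by Lemma \ref{composition} drops exactly on the base loci of $\mathcal{E}_j^\vee\otimes\mathcal{E}_i\otimes\omega_X$ over pairs with $\textnormal{Hom}^2(\mathcal{E}_i,\mathcal{E}_j)\ne 0$, and then the class computation $L_i-L_j+K_X$ (including $K_X-D_i=-K_X+E_i$) recovering the listed linear systems.
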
	
		\begin{proof}
			These are the points $x\in X$ that fail the hypotheses of Lemma \ref{general-compositions}.
		\end{proof}
		
		We will show that only the first of these linear systems has divisors in its base locus.		
			
		\begin{proposition}\label{elliptic-curves}
			The only irreducible effective divisors supported in the special locus of $X$ are $-K_{X}+E_{i}$ for $1\leq i\leq 10$. 
		\end{proposition}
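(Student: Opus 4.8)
The plan is to go through the five families of linear systems appearing in Lemma \ref{linear-systems} and determine, for each, the divisorial part of its base locus. Since the special locus of $X$ is the finite union of these base loci, any irreducible effective divisor $\Gamma$ contained in it is a fixed component of one of the systems, so this suffices. For $|-K_X+E_i|$ the answer is immediate: over $\mathbf{P}^2$ the class $-K_X+E_i=3H-\sum_{j\neq i}E_j$ is the system of cubics through the nine points $x_j$ ($j\neq i$), which by Condition \ref{generality} (for this class and its subsystems) impose independent conditions, so $h^0(-K_X+E_i)=1$ and the single member is the strict transform of the unique — and, by generality, smooth and irreducible — plane cubic through nine general points. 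Hence $\mathrm{Bs}\,|-K_X+E_i|$ is exactly the prime divisor $-K_X+E_i$; as $(-K_X+E_i)^2=(-K_X+E_i)\cdot K_X=0$, these are the arithmetic-genus-one curves of the proposition's name, and they account for all ten asserted curves.

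The remaining task is to prove that the four systems $|K_X-F|$, $|K_X-2F|$, $|K_X-F+D_i|$, $|K_X-2F+D_i|$ have no fixed component. I would first record their cohomology: by Lemma \ref{standard-form} (and Lemma \ref{174/55} for $|K_X-2F|$, whose slope $35/11$ exceeds $174/55$) all four classes are non-special, while the Serre-dual classes $F$, $2F$, $F-D_i$, $2F-D_i$ all have negative $H$-coefficient, hence no sections; so $h^2=0$ and $h^0=\chi$, giving the values $3,6,2,5$ by Lemma \ref{Euler-characteristics}. Suppose now some such $|L|$ had a nonzero fixed part $\Phi$, with mobile part $M=L-\Phi$. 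Then $|M|$ has no fixed component, so $M$ is nef, and since $X$ is rational this forces $h^1(\mathcal{O}_X(M))=0$. Applying Riemann--Roch to $0\to\mathcal{O}_X(M)\to\mathcal{O}_X(L)\to\mathcal{O}_\Phi(L)\to0$ and using $h^0(M)=h^0(L)=\chi(L)$ yields
\[
2\,h^2(\mathcal{O}_X(M))\;=\;-2\,M\cdot\Phi\;-\;\Phi\cdot(\Phi-K_X),
\]
so that $\Phi^2\le\Phi\cdot K_X$, while $M$ is effective, nonzero and nef with $M^2\ge 0$.

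These numerical constraints should leave only finitely many possibilities for $\Phi$: writing $\Phi=aH-\sum b_jE_j$, effectivity forces $b_j\le a$, the containment $\Phi\le L$ bounds $a$ and each $b_j$, and $a^2+3a\le\sum b_j(b_j+1)$ cuts the list down further. I would then eliminate each candidate by showing the residual class $L-\Phi$ is either not effective or has $h^0<h^0(L)$ — a Riemann--Roch computation combined with the emptiness and non-specialty criteria (Lemmas \ref{non-special}, \ref{174/55}, \ref{standard-form}) and, where those do not apply directly, a Cremona reduction (Lemma \ref{Cremona}); the $S_{10}$-symmetry (Lemmas \ref{symmetry}, \ref{permutation}) collapses the dependence on which $D_i$ or $E_j$ occurs, so that $|K_X-F|$ and $|K_X-2F|$ require only a handful of cases and the $D_i$-systems only one orbit. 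In particular the curves $-K_X+E_j$, which meet the numerical inequality with equality, are ruled out because $L-(-K_X+E_j)$ has no sections in every case. Concluding $\Phi=0$ for all four systems completes the proof.

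The main obstacle is this elimination step, above all for $|K_X-F|$ and $|K_X-2F|$, where the $H$-degree is largest and the multiplicities sit exactly at the threshold of Lemma \ref{standard-form}: the numerical inequality $\Phi^2\le\Phi\cdot K_X$ alone does not finish the argument, since a candidate fixed part could a priori be a sum of several $(-1)$-curves or of anticanonical curves, so the effectivity of each residual class has to be decided individually. Keeping this finite case analysis short — leaning on the symmetry and on Cremona reduction rather than on brute enumeration — is the delicate part.
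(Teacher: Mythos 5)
There are two problems: a false intermediate claim, and the omission of the case analysis that actually constitutes the proof.

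The step ``$|M|$ has no fixed component, so $M$ is nef, and since $X$ is rational this forces $h^1(\mathcal{O}_X(M))=0$'' is not a theorem. Nef does not imply $h^1$-vanishing on a rational surface: on a rational elliptic surface the fiber class $F$ is nef with $h^0(F)=2$, $\chi(F)=1$, $h^2(F)=0$, hence $h^1(F)=1$. For the blowup of $\mathbf{P}^2$ at ten general points $-K_X$ is not even effective, so vanishing theorems for anticanonical rational surfaces do not apply, and the statement you need here is essentially an instance of the SHGH-type expectations, which the paper nowhere assumes. Without that vanishing, the chain $h^2(M)\ge 0 \Rightarrow \Phi^{2}\le\Phi\cdot K_X$ is unsupported, so your numerical constraint on the putative fixed part $\Phi$ is unproved. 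And even granting it, you explicitly flag the elimination of the remaining candidate $\Phi$'s as ``the delicate part'' and do not carry it out; but that finite check is the substance of the proposition, since the opening paragraph on $|-K_X+E_i|$ and the setup are routine.

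The paper's argument is genuinely different and sharper. It never passes through $\Phi^2\le\Phi\cdot K_X$. Instead, if $B$ is an effective divisor in the base locus of $|D|$, then both $|B|$ and $|D-B|$ are nonempty; Lemma \ref{permutation}, using the $S_{10}$-symmetry (or the $S_9$-symmetry in the $D_i$-twisted cases) of the multiplicities of $D$, lets one replace $B$ by its orbit sum, a class of the form $dH-m\sum_j E_j$ (respectively $dH-m\sum_j E_j-m'E_i$). Lemma \ref{non-special} then forces $\frac{d}{m}\ge\frac{2280}{721}$ for this class and likewise for the residual class; since $\frac{2280}{721}$ lies just below $\sqrt{10}$, these slope constraints are nearly optimal and leave only a handful of $(d,m)$ (or $(d,m,m')$) pairs, each of which is then killed by Cremona reduction (Lemma \ref{Cremona}), Lemma \ref{standard-form}, or an Euler-characteristic count. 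Your Riemann--Roch inequality, even if it could be salvaged, is far weaker than this slope bound and would leave a much larger search; the $S_{10}$-symmetrization together with Lemma \ref{non-special} is the idea you are missing.
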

		\begin{proof}
			We can compute using Riemann-Roch for surfaces that $\chi(-K_{X}+E_{i})=1$, and so by Lemma \ref{standard-form} we indeed have $|-K_{X}+E_{i}|=0$ so the unique effective divisor is the base locus.
			
			Next, for each of the other linear systems $|D|$, assuming $B$ is an effective divisor in the base locus, and so $D-B$ is also effective, we will reach a contradiction.  
			\begin{itemize}
				\item $|D|:=|K_{X}-F|=|16H-5\sum_{j=1}^{10}E_{j}|$.
				
				Since all the coefficients of the $E_{i}$'s are the same, by Lemma \ref{permutation} each $\sigma B$ is in the base locus for $\sigma\in S_{10}$, so we can replace $B$ with $\sum_{\{\sigma B\}}\sigma B$ and assume $B\sim dH-m\sum_{j=1}^{10}E_{j}$. For both $B$ and $D-B$ to be effective, by Lemma \ref{non-special}, we must have $\frac{d}{m},\frac{16-d}{5-m}\geq \frac{2280}{721}$, which we can see is impossible for integers $0\leq d\leq 16$ and $0\leq m\leq 5$.
				
				\item $|D|:=|K_{X}-2F|=|35H-11\sum_{j=1}^{10}E_{j}|$.
				
				By the same argument as above, we must have $\frac{d}{m},\frac{35-d}{11-m}\geq \frac{2280}{721}$. The only possible case is 
				$$D=\left(19H-6\sum_{j=1}^{10}E_{j}\right)+\left(16H-5\sum_{j=1}^{10}E_{j}\right) $$
				but we know $|19H-6\sum_{j=1}^{10}E_{j}|$ is empty as its Euler characteristic is zero and Lemma \ref{standard-form} applies.
				
				\item $|D|:=|K_{X}-F+D_{i}|=|10H-3\sum_{j=1}^{10}E_{j}-E_{i}|$.
				
				Again applying Lemma \ref{permutation}, we can assume $B\sim dH-m\sum_{j=1}^{10}E_{j}-m'E_{i}$. Taking the sum over $i=1,...,10$, we see that an effective divisor in $|100H-31\sum_{j=1}^{10}|$ has a component linearly equivalent to $10dH-(10m+m')\sum_{j=1}^{10}E_{j}$. From the conditions
				$$\frac{10d}{10m+m'},\frac{100-10d}{31-(10m+m')}\geq\frac{2280}{721} $$
				for integers $0\leq d \leq 5$ (assuming without loss of generality $d\leq 10-d$), $0\leq m\leq 3$, $0\leq m'+m\leq 4$, we check that we are left with the following cases, each of which we show is impossible:
				\begin{itemize}
					\item $d=1$, $m=0$, $m'=3$.
					
					$|B|=|H-3E_{i}|$ is empty, as no line intersects a point with multiplicity $3$.
					\item $d=3$, $m=1$, $m'=-1$.
					
					$|D-B|=|7H-2\sum_{j=1}^{10}E_{j}-2E_{i}|$, and we can apply Lemma \ref{Cremona} to take Cremona reductions:
					\begin{align*}
						\dim|D-B| &= \dim|7H-4E_{1}-2E_{2}-2E_{3}-...-2E_{10}| \\&=
						\dim|6H-3E_{1}-2E_{2}-...-2E_{8}-E_{9}-E_{10}| \\&=
						\dim|5H-2E_{1}-...-2E_{6}-E_{7}-E_{8}-E_{9}-E_{10}| \\&=
						\dim|4H-2E_{1}-2E_{2}-2E_{3}-E_{4}-...-E_{10}| \\&=
						\dim|2H-E_{1}-....-E_{7}| \\&=
						\dim|H-E_{1}-E_{2}-E_{3}-E_{4}| 
					\end{align*}
					and this last linear system is empty, as there is no line through $4$ general points in $\mathbf{P}^{2}$.
				\end{itemize}
				\item $|D|:=|K_{X}-2F+D_{i}|=|29H-9\sum_{j=1}^{10}E_{i}-E_{i}|$.
				
				By the same argument as above, we must have 
				$$\frac{10d}{10m+m'},\frac{290-10d}{91-(10m+m')}\geq\frac{2280}{721} $$
				with $0\leq d\leq 14$ (assuming without loss of generality $d\leq 29-d$), $0\leq m\leq 9$, and $0\leq m+m'\leq 10$, leaving the cases:
				\begin{itemize}  
					\item $d=1$, $m=0$, $m'=3$.
					
					Again $|B|=|H-3E_{i}|$ is empty.
					\item $d=2$, $m=0$, $m'=6$.
					
					$|B|=|2H-6E_{i}|$ is empty, as no conic has  multiplicity $6$ at any point.
				
					\item $d=3$, $m=0$, $m'=9$.
					
					$|B|=|3H-9E_{i}|$ is empty as no planar cubic has multiplicity $9$ at any point.					
					
					\item $d=3$, $m=1$, $m'=-1$.
					
					$|D-B|=|26H-8\sum_{j=1}^{10}E_{j}-2E_{i}|$ has Euler characteristic $-1$ and satisfies the hypotheses of Lemma \ref{standard-form}, so the linear system is empty.
					
					\item $d=4$, $m=1$, $m'=2$.
					
					$|B|=|4H-\sum_{j=1}^{10}E_{j}-2E_{i}|$ has Euler characteristic $0$, and again Lemma \ref{standard-form} applies so the linear system is empty.	
				\end{itemize}
			\end{itemize}
			\begin{remark}
				The proof of Proposition \ref{elliptic-curves} assumes that the $10$ blown-up points are general in the sense that Condition \ref{generality-base-loci} holds for all $|D|$ listed in Lemma \ref{linear-systems}.
			\end{remark}

		\end{proof}
		\begin{theorem}\label{blowdown}
			The data of $i:\mathcal{A}\hookrightarrow D^{b}_{\textnormal{Coh}}(X)$ uniquely characterizes the blowdown map $\pi\colon X\to\mathbf{P}^{2}$.
		\end{theorem}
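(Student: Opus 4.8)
The idea is to extract the blowdown map purely from the categorical data by reconstructing, one exceptional curve at a time, the ten $(-1)$-curves that get contracted, and then invoking Castelnuovo's criterion. The key geometric observation, assembled from the preceding results, is that the special locus of $X$ — which by Lemma \ref{linear-systems} and Proposition \ref{elliptic-curves} contains a divisorial part consisting \emph{exactly} of the curves in the base loci of the ten linear systems $|-K_{X}+E_{i}|$ — is an intrinsic invariant of the pair $(\mathcal{A}, i)$: the special locus is defined by the drop in rank of the Yoneda product $\operatorname{Hom}^{1}(i^{*}\kappa(x),i^{*}\kappa(x))^{\otimes 2}\to\operatorname{Hom}^{2}(i^{*}\kappa(x),i^{*}\kappa(x))$, and the map $p\colon x\mapsto i^{*}\kappa(x)$ is itself determined by $i$ once we know it lands in $s\mathcal{M}(\mathcal{A})$ (Proposition \ref{birational} identifies its image with a distinguished irreducible component of $sM(\mathcal{A})$, so $X$ with its point-configuration is recovered from $i$ up to the birational map $p$, which is an isomorphism onto an open subset). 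Thus from $i$ alone we recover $X$ and the divisor $R := \sum_{i=1}^{10} R_i$, where $R_i$ is the unique effective divisor in $|-K_{X}+E_{i}|$.

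First I would spell out the reconstruction of $X$ and of the special locus as an abstract consequence of the earlier sections: by Proposition \ref{birational} the component of $sM(\mathcal{A})$ meeting $p(X)$ is birational to $X$ via $p$, and on the open locus $U$ of Lemma \ref{obstructed} the map $p$ is an open immersion; the special locus of $X$ pulls back to the locus where the stated Yoneda product rank drops, which is manifestly intrinsic. Next I would use Proposition \ref{elliptic-curves}: among all irreducible effective divisors on $X$, exactly the ten divisors $R_i\in|-K_X+E_i|$ lie in the special locus. Each $R_i$ has $R_i^2 = (-K_X+E_i)^2 = K_X^2 - 2K_X\cdot E_i + E_i^2 = -1 - 2 - 1 = -4$ — wait, that is not $(-1)$; rather, one should recover $E_i$ from $R_i$ by the linear-algebra identity $E_i = -K_X - (-K_X+E_i)$ reversed: since we know $K_X$ intrinsically (it is a numerical invariant of $X$, recovered with $X$), and we know the class of $R_i$ as $-K_X + E_i$, we extract $E_i = [R_i] + K_X$. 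To know that $[R_i]$ is the class $-K_X+E_i$ rather than some other special class, I would use that $|-K_X+E_i|$ is $0$-dimensional with $\chi = 1$ (computed in the proof of Proposition \ref{elliptic-curves}) and that it is the \emph{only} special linear system carrying an irreducible divisor, so the ten irreducible divisors in the special locus are canonically the $R_i$, their classes are read off from $X$, and subtracting $K_X$ gives the ten classes $E_i$.

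Then I would invoke Castelnuovo: each $E_i$ is a $(-1)$-curve (the divisor $R_i = -K_X + E_i$ is effective and irreducible, but it is $E_i$ itself that is the smooth rational $(-1)$-curve — one checks $E_i^2 = -1$, $E_i\cdot K_X = -1$), and although the $R_i$ are the geometric objects we see, their classes determine the classes $E_i$, which in turn (being an orthonormal-up-to-sign exceptional configuration together with the hyperplane class $H = \tfrac{1}{3}(-K_X + \sum E_i)$ recovered by linear algebra from $K_X = -3H + \sum E_i$) determine a contraction $X \to \mathbf{P}^2$: the morphism contracting the ten disjoint $(-1)$-curves $E_1,\dots,E_{10}$ exists and is unique up to the choice of isomorphism of the target with $\mathbf{P}^2$, and the polarization $H$ on the target is pinned down. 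Since the $E_i$ were themselves recovered canonically, so is $\pi$. The main obstacle I anticipate is the bookkeeping in the previous paragraph: verifying that the special locus, as an abstract subscheme of the reconstructed $X$, has divisorial part equal to precisely $\bigcup R_i$ with \emph{no} spurious components — this is exactly the content of Proposition \ref{elliptic-curves}, so the work there is what makes the argument go through — and checking that $p$ is genuinely an isomorphism (not merely birational) onto an open subscheme over which the $R_i$ and hence the $E_i$ are visible, so that the classes $E_i \in \operatorname{Pic}(X)$ are unambiguously determined rather than only determined generically. Once those points are in hand, the Castelnuovo contraction and the identification of the target with $\mathbf{P}^2$ via $H$ are formal.
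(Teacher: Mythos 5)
Your proposal follows the paper's argument: use Proposition \ref{elliptic-curves} to intrinsically identify the ten divisors $R_i\in|-K_X+E_i|$ via the special locus, recover $E_i=[R_i]+K_X$, and then contract. Two small notes: since the theorem hands you the embedding $i\colon\mathcal{A}\hookrightarrow D^{b}_{\text{Coh}}(X)$, the surface $X$ (hence $K_X$ and $\operatorname{Pic}(X)$) is already part of the data, so the detour through Proposition \ref{birational} to reconstruct $X$ is unnecessary; and the self-intersection is $(-K_X+E_i)^2=K_X^2-2K_X\cdot E_i+E_i^2=-1+2-1=0$ (an elliptic class, hence the lemma's name), not $-4$ — you mis-signed the cross term — though you correctly abandon that calculation and extract $E_i$ by subtracting $K_X$ instead.
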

		\begin{proof}
			By Proposition \ref{elliptic-curves}, the given data characterizes the divisors of the form $-K_{X}+E_{i}$, which identify the $E_{i}$'s, which uniquely determines the blowdown map.
		\end{proof}
		Note that the same scheme $X$ can have multiple blow-down maps to $\mathbf{P}^{2}$, for instance by taking Cremona transformations. Thus, we have actually recovered some non-trivial geometry from the phantom. 
		
		However, this result depended on having the data of the embedding of the phantom, and is still far from using only the intrinsic properties of $\mathcal{A}$ as a triangulated category. We hope the following class of objects may prove promising in that direction.
		
	\section{Objects with Two Non-zero Cohomology Sheaves}\label{section-two-cohomology-sheaves}
	
		In this section we will construct and study objects of the phantom that we can describe very explicitly. It is here that our modification of Krah's phantom construction is important: the analogous objects in his phantom have more than two non-zero cohomology sheaves!  

		The idea is to construct a sheaf $\mathcal{G}$ that is orthogonal to most of the $\mathcal{E}_{i}$'s, so that its projection into the phantom is not much more complicated. We will take $\mathcal{G}$ to be a line bundle $\mathcal{L}$ supported on a carefully chosen curve $\iota:C\hookrightarrow X$.
		
		While the previous two constructions were rather general, the following will engage fundamentally with the particular geometry of $X$.
				
		\subsection{Existence of special curves}	
		
		We want $\text{Hom}^{*}(\mathcal{O}_{X},\iota_{*}\mathcal{L})=\text{Hom}^{*}(\mathcal{O}_{X}(D_{i}),\iota_{*}\mathcal{L})=0$. For this to hold, we need $D_{i}.C=0$, implying $C$ is linearly equivalent to a multiple of $F$. So we want to find some $|-nF|$ that contains curves. Since $h^{0}(X,\mathcal{O}_{X}(-F))=h^{0}(X,\mathcal{O}_{X}(-2F))=0$, we will need $n\geq 3$. 
		
		\begin{proposition}\label{ample}
			$-F$ is ample. 
		\end{proposition}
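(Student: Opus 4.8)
The plan is to verify the Nakai--Moishezon criterion for $-F$: it suffices to show $(-F)^2>0$ and $(-F)\cdot C>0$ for every irreducible curve $C\subset X$. The self-intersection is immediate, since $F$ is the image of $H$ under an isometry of $\text{Pic}(X)$: thus $(-F)^2=F^2=H^2=1>0$ (or compute $(-19H+6\sum_i E_i)^2=361-360=1$ directly). For positivity against curves, first observe that an irreducible curve with $C\cdot H=0$ is contracted by $\pi$, hence equals some $E_i$, and $(-F)\cdot E_i=6>0$. So we may assume $C$ dominates $\mathbf{P}^2$ and write $C\sim dH-\sum_{i=1}^{10}m_iE_i$ with $d=C\cdot H\ge 1$ and $m_i=C\cdot E_i\ge 0$; then $(-F)\cdot C=19d-6\sum_i m_i$, so we must prove $\sum_i m_i<\tfrac{19}{6}d$.

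One clean case is $C^2\ge 0$: then $C$ is nef and, as $C\cdot H=d>0$, lies in the closed positive cone on the same side as the nef class $H$, while $(-F)^2=1>0$ and $(-F)\cdot H=19>0$ put $-F$ in the open positive cone, so Hodge index gives $(-F)\cdot C>0$ at once. The argument below also covers this case, but the essential difficulty is the curves with $C^2<0$, for which a bound on $\sum m_i$ coming from $p_a(C)\ge 0$ alone is not sharp enough (the genus bound only yields $\sum m_i<\tfrac{19}{6}d$ once $d$ is large).

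The key step is to combine the $S_{10}$-symmetry of the point configuration with a Nagata-type non-existence theorem. Suppose for contradiction $\sum_i m_i\ge \tfrac{19}{6}d$. Since $|C|\neq\varnothing$, Lemma~\ref{symmetry} gives $\dim\sigma|C|=\dim|C|\ge0$ for all $\sigma\in S_{10}$, so each $\sigma|C|$ contains an effective divisor $C_\sigma$ of class $dH-\sum_i m_{\sigma^{-1}(i)}E_i$; summing over $\sigma\in S_{10}$ produces an effective divisor in the class $10!\,dH-9!\bigl(\sum_i m_i\bigr)\sum_j E_j$, so this symmetric linear system is non-empty. But its coefficient ratio is $\tfrac{10d}{\sum_i m_i}\le\tfrac{60}{19}$, and $\tfrac{60}{19}<\tfrac{2280}{721}$ (equivalently $60\cdot 721=43260<43320=19\cdot 2280$), so Lemma~\ref{non-special} forces this system to be empty --- a contradiction. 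Hence $\sum_i m_i<\tfrac{19}{6}d$, i.e. $(-F)\cdot C>0$, and Nakai--Moishezon applies.

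I expect the crux to be exactly this numerical coincidence: $F=-19H+6\sum_iE_i$ is arranged so that $\tfrac{19}{6}$ sits just above Petrakiev's threshold $\tfrac{2280}{721}\approx\sqrt{10}$, so a genuinely strong lower bound for the slopes of non-empty symmetric linear systems is needed rather than anything elementary. The remaining subtlety is bookkeeping of generality: Lemma~\ref{symmetry} is being applied to the (a priori infinitely many) systems $|C|$, and Lemma~\ref{non-special} is a statement about general points, so here ``$10$ general points'' must be taken to include whatever conditions those results require; this should be recorded in a remark, as elsewhere in the paper.
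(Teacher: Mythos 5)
Your proof takes essentially the same route as the paper: Nakai--Moishezon, reduce to the slope inequality $\sum m_i < \tfrac{19}{6}d$, symmetrize over $S_{10}$ using Lemma~\ref{symmetry} to produce an effective divisor in a homogeneous class, then invoke Lemma~\ref{non-special} and the numerical coincidence $\tfrac{60}{19} < \tfrac{2280}{721}$. The framing differs (you argue by contradiction; the paper runs the inequality directly), but the content is identical, and your aside on the $C^2\ge 0$ case via Hodge index is a nice sanity check, though not needed.

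One small but real gap: you flag the very-general vs.\ general issue at the end but do not actually resolve it. Since the argument applies Lemma~\ref{symmetry} to each of the infinitely many possible classes of $C$, it proves ampleness only for $\{x_i\}$ in \emph{very general} position, which is a countable intersection of conditions and is not compatible with the finitely-many-conditions framework of Condition~\ref{generality}. Simply declaring that ``$10$ general points'' should include whatever these lemmas need does not fix this, because that would require countably many open conditions. The paper instead closes the gap by first establishing ampleness for very general configurations and then invoking openness of the ample cone (Theorem~1.2.17 of \cite{positivity}) to descend to a nonempty Zariski-open set of configurations, i.e.\ to genuinely general position; this is also why the paper's remark after the proof notes that no explicit list of generality conditions is available there. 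Your writeup should replace the hand-wave with that openness argument.
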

		\begin{proof}
			To apply the Nakai-Moishezon criterion, we note $(-F)^{2}=1>0$, and we need to show for all reduced irreducible curves $C\subseteq X$ that $-F.C>0$. Since $-F.E_{i}=6>0$, it suffices to consider $C\sim aH-\sum_{i=1}^{10}b_{i}E_{i}$. Since $-F.C=19a-6\sum_{i=1}^{10}b_{i}$, we want to show 
			$$\frac{\sum_{i=1}^{10}b_{i}}{a}<\frac{19}{6} $$
			
			By Lemma \ref{symmetry}, for each $\sigma\in S_{10}$ there exists a curve $C_{\sigma}\sim aH-\sum_{i=1}^{10}b_{\sigma(i)}E_{i}$. Let $\tilde{C}$ be the union of all of them, so 
			$$\tilde{C}\in\left|10!aH-\left(9!\sum_{j=1}^{10}b_{j}\right)\sum_{i=1}^{10}E_{i} \right| $$
			Since this is a non-empty homogeneous system, we have from Lemma \ref{non-special} the bound 
			$$\frac{10!a}{9!\sum_{j=1}^{10}b_{j}}=\frac{10a}{\sum_{j=1}^{10}b_{j}}\geq\frac{2280}{721} $$
			So we have 
			$$\frac{\sum_{j=1}^{10}b_{j}}{a}\leq\frac{7210}{2280}<\frac{19}{6} $$
			as desired.
			
			Since we have applied Lemma \ref{symmetry} to countably many divisor classes, at this point we have only shown $-F$ is ample assuming $\{x_{i}\}_{i=1}^{10}$ are in very general position. However, by Theorem 1.2.17 of \cite{positivity}, ampleness in one configuration implies ampleness in an open subset of configurations, which implies the result for $\{x_{i}\}_{i=1}^{10}$ in general position.
		\end{proof} 
		\begin{remark}
			Since we applied a general openness result, we do not have a precise description of the generality condition on $\{x_{i}\}_{i=1}^{10}$ assumed in the proof of Proposition \ref{ample}.
		\end{remark}
		\begin{corollary}\label{existence}
			With the same assumptions as Proposition \ref{ample} and for large enough $n$, general curves in $|-nF|$ are integral and non-singular, and their pushforwards intersect each $x_{i}$ with multiplicity exactly $6n$.
		\end{corollary}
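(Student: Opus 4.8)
The plan is to apply the standard Bertini-type theorems to the linear system $|-nF|$ once $n$ is large enough, using the ampleness of $-F$ established in Proposition \ref{ample}. First I would invoke ampleness of $-F$ together with Serre vanishing: since $-F$ is ample, for all $n \gg 0$ the line bundle $\mathcal{O}_X(-nF)$ is very ample and globally generated, so it defines a closed embedding of $X$ into projective space. By Bertini's theorem (in characteristic $0$), a general member of the linear system $|-nF|$ is then smooth and, since $X$ is smooth irreducible of dimension $2$ and $\dim|-nF| \geq 1$ for $n \gg 0$ (which follows from Riemann-Roch, as $(-nF)^2 = n^2 > 0$ and $-nF \cdot K_X = -nF \cdot K_X$ grows, forcing $h^0$ to grow), a general member is connected, hence integral. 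The connectedness uses that a general hyperplane section of a connected projective variety of dimension $\geq 2$ is connected.

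Next I would address the intersection-multiplicity claim. A curve $C \in |-nF|$ has class $-nF = 19nH - 6n\sum_i E_i$, so $C \cdot E_i = 6n$ for every $i$. The pushforward $\pi_*C$ is a plane curve of degree $19n$ passing through each $x_i$; the multiplicity of $\pi_*C$ at $x_i$ equals $C \cdot E_i = 6n$ provided $C$ meets $E_i$ transversally and does not contain $E_i$ as a component — which holds for a general $C$ since $|-nF|$ is base-point free for $n \gg 0$ (ampleness again), so a general member meets each fixed curve $E_i$ in $6n$ distinct points away from any prescribed bad locus, and in particular the strict transform of $\pi_*C$ is exactly $C$. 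So "intersects $x_i$ with multiplicity exactly $6n$" is just the statement $C \cdot E_i = 6n$ combined with transversality, both automatic for general $C$.

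I expect the main obstacle to be bookkeeping rather than anything deep: one must be slightly careful that all three conditions (integral, non-singular, correct multiplicity at each $x_i$) hold \emph{simultaneously} for a general member, which is fine since the locus of members failing any one of these is a proper closed subset of $|-nF|$ and there are finitely many conditions. One should also pin down how large $n$ must be — certainly large enough that $-nF$ is very ample and base-point free, and that $h^0(X, \mathcal{O}_X(-nF)) \geq 2$ — but no explicit bound is needed for the statement. A minor subtlety is that Proposition \ref{ample} only gives ampleness for $\{x_i\}$ in general position without an explicit generality condition, so the corollary inherits that same imprecise hypothesis, which is why the statement says "with the same assumptions as Proposition \ref{ample}."
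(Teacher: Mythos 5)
Your proposal is correct and follows essentially the same route as the paper, which simply cites Bertini's theorem (Hartshorne V.1.2) for smoothness and irreducibility and a standard reference for the multiplicity statement. One minor superfluity: transversality of $C\cap E_i$ is not needed for $\mathrm{mult}_{x_i}(\pi_*C)=C\cdot E_i=6n$; that equality holds whenever an irreducible $C\neq E_i$ is the strict transform of $\pi_*C$, regardless of tangency.
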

		\begin{proof}
			The first part follows from Bertini's theorem (Hartshorne Lemma V.1.2), and the latter is generally known (see \cite{DP2023}[Remark 1.2]).
		\end{proof}
		The genus of such a curve is 
		$$g_{n}=\frac{1}{2}(19n-1)(19n-2)-10\frac{(6n)(6n-1)}{2}=\frac{n^{2}+3n+2}{2}$$
		So a general line bundle of degree $\frac{1}{2}n(n+3)$ will do.
		
		Now we show that $|-3F|$ contains a reduced irreducible curve, and this will require $\{x_{i}\}_{i=1}^{10}$ to only be in general, not very general position. 		
		\begin{lemma}\label{unique}
			$\dim|-3F|=0$.
		\end{lemma}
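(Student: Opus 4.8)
\emph{Plan.} The divisor under consideration is $-3F = 57H - 18\sum_{i=1}^{10}E_i$ (of arithmetic genus $g_3 = 10$ by the formula above). The strategy is to determine all three cohomology groups of $\mathcal{O}_X(-3F)$: I will show $\chi(\mathcal{O}_X(-3F)) = 1$, that $h^2 = 0$, and that the system is non-special so that $h^1 = 0$; together these force $h^0 = 1$, i.e.\ $\dim|-3F| = 0$.

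First I would compute, by Riemann--Roch on the surface $X$, that
$\chi(\mathcal{O}_X(-3F)) = \chi(\mathcal{O}_X) + \tfrac12\bigl((-3F)^2 - (-3F)\cdot K_X\bigr) = 1 + \tfrac12(9 - 9) = 1$,
using $(-3F)^2 = 9(-F)^2 = 9$ and $(-3F)\cdot K_X = -3\,(F\cdot K_X) = -3\cdot(-3) = 9$ (recall $F$ is a reflection of $H$ fixing $K_X$, so $F\cdot K_X = H\cdot K_X = -3$). For the top cohomology, Serre duality gives $h^2(\mathcal{O}_X(-3F)) = h^0(\mathcal{O}_X(K_X + 3F))$, and $K_X + 3F = -60H + 19\sum_i E_i$ has no global sections since its degree over $\mathbf{P}^2$ is negative (equivalently, it has zero intersection with the ample class $-F$ of Proposition \ref{ample} yet is not numerically trivial). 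Hence $h^0(\mathcal{O}_X(-3F)) = 1 + h^1(\mathcal{O}_X(-3F))$; in particular $-3F$ is effective, and $\dim|-3F| = h^1(\mathcal{O}_X(-3F))$.

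It remains to show $|-3F|$ is non-special, and here I would invoke Lemma \ref{174/55}: writing $-3F = 57H - 18\sum_i E_i$ we have $\tfrac{d}{m} = \tfrac{57}{18} = \tfrac{19}{6}$, and $\tfrac{19}{6} > \tfrac{174}{55}$ because $19\cdot 55 = 1045 > 1044 = 6\cdot 174$. Therefore $h^1(\mathcal{O}_X(-3F)) = 0$, so $h^0(\mathcal{O}_X(-3F)) = \chi(\mathcal{O}_X(-3F)) = 1$ and $\dim|-3F| = 0$.

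The main obstacle is precisely this last numerical point: the inequality $\tfrac{19}{6} > \tfrac{174}{55}$ holds by the razor-thin margin $\tfrac1{330}$, so the argument genuinely depends on the sharp non-speciality threshold of \cite{CM2011}. In particular one cannot instead Cremona-reduce into the range of Lemma \ref{standard-form}, since the multiplicities $m_i = 18$ exceed $11$ and the class $57H - 18\sum_i E_i$ is already in standard form ($m_1 + m_2 + m_3 = 54 < 57 = d$). I would also record that this application of Lemma \ref{174/55}, like Proposition \ref{ample}, requires the ten points to be suitably general.
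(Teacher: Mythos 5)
Your proof is correct and takes essentially the same route as the paper: compute $\chi(\mathcal{O}_X(-3F)) = 1$ by Riemann--Roch and invoke Lemma~\ref{174/55} to conclude non-speciality and hence $\dim|-3F| = \chi - 1 = 0$. You usefully spell out two details the paper's terse proof leaves implicit --- the vanishing $h^2(\mathcal{O}_X(-3F)) = 0$ (needed before "actual dimension matches expected dimension" can be read off from $\chi$) and the razor-thin arithmetic $19/6 > 174/55$ --- but the key ingredients and logic are identical.
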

		\begin{proof}
			Note $-3F=57H-18\sum_{i=1}^{10}E_{i}$, so 
			$$\chi(-3F)=\binom{57+2}{2}-10\binom{18+1}{2}=1,$$
			and by Lemma \ref{174/55}, the actual dimension matches the expected dimension.
		\end{proof}
		Let $C$ be the unique (not necessarily integral) curve in $|-3F|$. 
		
		\begin{lemma}\label{57H-18E}
			If $D=dH-m\sum_{i=1}^{10}E_{i}$, with $0<d<57$ and $0\leq m\leq 18$, then either $|D|$ or $|-3F-D|$ is empty.
		\end{lemma}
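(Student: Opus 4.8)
The plan is to argue by contradiction, using Petrakiev's emptiness bound (Lemma~\ref{non-special}) applied both to $D$ and to its ``complement'' $-3F-D$. Write $D = dH - m\sum_{i=1}^{10}E_i$ with $0<d<57$ and $0\le m\le 18$. Since $-3F = 57H - 18\sum_{i=1}^{10}E_i$, the class $-3F-D = (57-d)H - (18-m)\sum_{i=1}^{10}E_i$ is again a homogeneous linear system with nonnegative coefficients, so Lemma~\ref{non-special} applies to it as well. Assume, for contradiction, that both $|D|$ and $|-3F-D|$ are nonempty. When $1\le m\le 17$, Lemma~\ref{non-special} forces $\tfrac{d}{m}\ge \tfrac{2280}{721}$ and $\tfrac{57-d}{18-m}\ge\tfrac{2280}{721}$, i.e. $721d\ge 2280m$ and $721(57-d)\ge 2280(18-m)$. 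When $m=0$ the system $|dH|$ is automatically nonempty and the bound on $-3F-D$ still applies; it reads $721d \le 41097 - 41040 = 57$, forcing $d=0$, contradicting $d>0$. When $m=18$ the system $|(57-d)H|$ is automatically nonempty and the bound on $D$ still applies; it reads $721d \ge 41040$, forcing $d\ge 57$, contradicting $d<57$.

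So we may assume $1\le m\le 17$. Combining the two inequalities (using $721\cdot 57 = 41097$ and $2280\cdot 18 = 41040$, the second rearranges to $721d\le 2280m+57$), we get $2280m \le 721d \le 2280m + 57$. This interval has length $57<721$, hence contains at most one integer $d$ for each $m$, and such a $d$ exists exactly when $2280m + k \equiv 0 \pmod{721}$ for some $k\in\{0,1,\dots,57\}$. Since $2280\equiv 117\pmod{721}$, this says $117m \bmod 721 \in \{0\}\cup\{664,\dots,720\}$. The only real computation in the proof is to run $m$ through $1,\dots,17$ and check this condition: it holds only for $m=6$ (since $117\cdot 6 = 702$) and $m=12$ (since $117\cdot 12 \equiv 683$).

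It then remains to rule out these two cases. For $m=6$ the congruence forces $721d = 2280\cdot 6 + 19 = 13699$, so $d = 19$ and $D = 19H - 6\sum_{i=1}^{10}E_i = -F$; for $m=12$ it forces $721d = 2280\cdot 12 + 38 = 27398$, so $d = 38$ and $D = -2F$. But $h^0(X,\mathcal{O}_X(-F)) = h^0(X,\mathcal{O}_X(-2F)) = 0$, so $|D|$ is empty in each case, contradicting our assumption. This exhausts all admissible $(d,m)$ and proves the lemma. (As with every appeal to Lemma~\ref{non-special}, this argument uses that the ten points are general; the finitely many homogeneous classes $|dH - m\sum E_i|$ with $2280m\le 721d\le 2280m+57$ entering the argument all satisfy Condition~\ref{generality-base-loci}, and should be listed in the accompanying generality remark.) I do not expect any serious obstacle: the arithmetic is short, and the two classes that survive the Petrakiev inequalities are precisely $-F$ and $-2F$ — the extremal classes already known to be ineffective, which is exactly why the bound of Lemma~\ref{non-special} is essentially sharp at those points.
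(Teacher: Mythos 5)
Your proof is correct and takes exactly the same route as the paper: apply Petrakiev's bound (Lemma~\ref{non-special}) to both $|D|$ and $|-3F-D|$, deduce that the only $(d,m)$ surviving both inequalities are $(19,6)$ and $(38,12)$, and dismiss these since $|-F|$ and $|-2F|$ are known to be empty. The paper simply asserts that these are the only integer solutions; your modular-arithmetic verification (reducing to $117m \bmod 721$) and your explicit handling of the boundary cases $m=0$ and $m=18$ fill in the routine details the paper omits.
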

		\begin{proof}
			Recalling Lemma \ref{non-special}, the only integer choices such that $\frac{d}{m},\frac{57-d}{18-m}\geq\frac{2280}{721}$ are $(d,m)=(19,6),(38,12)$, and we know $H^{0}(X,\mathcal{O}_{X}(-F))=H^{0}(X,\mathcal{O}_{X}(-2F))=0$ from Proposition \ref{Krah}.
		\end{proof}
		\begin{lemma}\label{exactly}
			$\pi_{*}C$ vanishes to order exactly $18$ at each $x_{i}$.
		\end{lemma}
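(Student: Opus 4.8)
The plan is to argue by contradiction, exploiting the uniqueness of $C$ from Lemma \ref{unique} together with the $S_{10}$-symmetry of the configuration. First I would reformulate the claim divisor-theoretically. Decomposing the effective divisor $C$ as $C=C^{\mathrm{st}}+\sum_{i}n_{i}E_{i}$, where $C^{\mathrm{st}}$ contains no $E_{i}$ among its components and $n_{i}\ge 0$, one has $\pi_{*}C=\pi_{*}C^{\mathrm{st}}$, a plane curve of degree $57$, and $[C^{\mathrm{st}}]=57H-\sum_{i}m_{i}E_{i}$ with $m_{i}=\mathrm{mult}_{x_{i}}(\pi_{*}C)$. Comparing $[C]=[C^{\mathrm{st}}]+\sum n_{i}E_{i}$ with $57H-18\sum_{i}E_{i}$ gives $m_{i}=18+n_{i}$. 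Thus $\pi_{*}C$ has multiplicity exactly $18$ at $x_{i}$ precisely when $E_{i}$ is not a component of $C$, i.e.\ when $C-E_{i}$ is not effective. So it suffices to rule out, say, $C-E_{1}$ being effective, equivalently to show that $|57H-19E_{1}-18\sum_{i\ge 2}E_{i}|$ is empty.

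Suppose it were nonempty. By Lemma \ref{symmetry}, applying a transposition $\sigma=(1\,j)\in S_{10}$ preserves the dimension of this linear system, so $|57H-19E_{j}-18\sum_{i\ne j}E_{i}|$ is nonempty for every $j$. Choosing an effective divisor $C_{j}$ in it, we get $C_{j}+E_{j}\in|57H-18\sum_{i}E_{i}|=|-3F|$, which by Lemma \ref{unique} is a single point; hence $C_{j}+E_{j}=C$, so $C-E_{j}$ is effective for all $j=1,\dots,10$. Since the $E_{j}$ are distinct prime divisors, this forces $C-\sum_{j=1}^{10}E_{j}$ to be effective, so $|57H-19\sum_{j=1}^{10}E_{j}|\ne\emptyset$. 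But $\tfrac{57}{19}=3<\tfrac{2280}{721}$, so Lemma \ref{non-special} says this system is empty, a contradiction. This proves Lemma \ref{exactly}.

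There is no serious computational obstacle here; the points requiring care are purely bookkeeping. One is the precise dictionary between the multiplicity of the plane curve $\pi_{*}C$ at $x_{i}$ and the $E_{i}$-coefficient of $C$ on $X$, which I would spell out once. The other is the legitimacy of the symmetrization step: it relies on $C$ being the \emph{unique} element of $|-3F|$, so that every effective representative of an $S_{10}$-translate of $57H-19E_{1}-18\sum_{i\ge 2}E_{i}$ must, after adding back the missing $E_{j}$, coincide with $C$ itself. As with the earlier lemmas, I would also record in a remark which instances of Condition \ref{generality} are invoked, namely those feeding Lemma \ref{symmetry} for the class $57H-19E_{1}-18\sum_{i\ge2}E_{i}$ and its permutations, and those underlying Lemma \ref{non-special} for $|57H-19\sum_{i}E_{i}|$.
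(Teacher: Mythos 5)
Your proof is correct and follows essentially the same route as the paper: translate the multiplicity claim into effectivity of $C-E_{i}$, symmetrize via Lemma \ref{symmetry} together with the uniqueness of $C\in|-3F|$ (Lemma \ref{unique}) to force $C-\sum_{j}E_{j}$ effective, and derive a contradiction with Lemma \ref{non-special}. You are in fact slightly more careful with the bookkeeping than the paper, which writes ``$C\in|57H-19\sum_{i}E_{i}|$'' (literally inconsistent with $C\in|-3F|$) where you correctly phrase the endpoint as nonemptiness of $|57H-19\sum_{i}E_{i}|$.
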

		\begin{proof}
			We are out of the range of the hypotheses of Lemma \ref{standard-form}, so we need a different argument. Assuming the contrary, $|-3F-E_{i}|$ is nonempty for some $i$, and so for all $i$, by Lemma \ref{symmetry}. Note that all $|-3F-E_{i}|\hookrightarrow |-3F|$, and since $\dim|-3F|=0$, these are all isomorphisms. Thus, $C$ vanishes to order $19$ at all $x_{i}$, so $C\in|57H-19\sum_{i=1}^{10}E_{i}|$. But by Lemma \ref{non-special}, that linear system is empty, so we have a contradiction.  
		\end{proof}
		\begin{lemma}\label{|-3F|}
			$C$ is reduced and irreducible.
		\end{lemma}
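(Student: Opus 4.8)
The plan is to show that $C \in |-3F|$ is reduced and irreducible by ruling out every way it could decompose. Suppose $C = \sum n_k C_k$ with the $C_k$ distinct integral curves and $n_k \geq 1$. Each component $C_k$ is an effective divisor, so $C_k \sim d_k H - \sum_i m_{k,i} E_i$ with $d_k, m_{k,i} \geq 0$ (after noting that a component could also be one of the exceptional curves $E_i$ themselves, which I handle separately below). Summing gives $\sum_k n_k d_k = 57$ and $\sum_k n_k m_{k,i} = 18$ for each $i$, the latter using Lemma \ref{exactly} that $\pi_* C$ vanishes to order exactly $18$ at each $x_i$.

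First I would dispose of the exceptional components: if some $E_i$ appears in $C$, then $C - E_i$ is effective, i.e.\ $|-3F - E_i| \neq \emptyset$, which contradicts Lemma \ref{exactly} (that argument already showed $|-3F-E_i|=\emptyset$). So every component $C_k$ has $d_k > 0$. Next, the key input is Lemma \ref{57H-18E}: for any $D = dH - m\sum_i E_i$ with $0 < d < 57$ and $0 \le m \le 18$, either $|D|$ or $|-3F-D|$ is empty. To use this, I would symmetrize: since the $E_i$-coefficients of $-3F$ are all equal, Lemma \ref{permutation} lets me replace any component (or sub-sum of components) by its $S_{10}$-orbit sum and stay inside the base locus of $|-3F|$ — more precisely, if $B$ is a sub-sum of components of $C$, then $B$ is in the base locus of $|-3F|$ (as $-3F - B$ is effective and $\dim|-3F|=0$), so by Lemma \ref{permutation} some $\sigma$-translate $B' \in \sigma|B|$ is also in the base locus, and averaging over $S_{10}$ produces an effective divisor of class $d'H - m'\sum_i E_i$ with $d' = \sum n_k d_k$ (over the chosen sub-sum) and $m' = \frac{1}{10}\sum_{k,i} n_k m_{k,i}$, both in the base locus of $|-3F|$. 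Taking $B$ to be a single proper sub-collection of components, $0 < d' < 57$, and Lemma \ref{57H-18E} forces $|-3F - (d'H - m'\sum E_i)|$ to be empty — but that complement is exactly the averaged class of the remaining components, which is effective, a contradiction. Hence $C$ has a single component, $C = nC_1$ with $n d_1 = 57$ and $n m_{1,i} = 18$.

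It remains to rule out $n > 1$. If $n \geq 2$ then $n \mid \gcd(57, 18) = 3$, so $n = 3$, giving $C_1 \sim 19H - 6\sum_i E_i = -F$. But $h^0(X, \mathcal{O}_X(-F)) = 0$ by Proposition \ref{Krah} (or Remark \ref{Krah-assumptions}), so $|-F| = \emptyset$, a contradiction. Therefore $n = 1$ and $C$ is integral, i.e.\ reduced and irreducible.

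I expect the main obstacle to be the bookkeeping in the symmetrization step: making precise that a sub-sum of components of the unique divisor in $|-3F|$ really lies in the base locus of $|-3F|$ (which needs $\dim|-3F| = 0$ from Lemma \ref{unique}, together with the generality hypothesis of Condition \ref{generality-base-loci} for all the intermediate classes $d'H - m'\sum E_i$ so that Lemma \ref{permutation} applies), and then checking that after $S_{10}$-averaging the resulting homogeneous classes genuinely fall into the range $0 < d' < 57$, $0 \le m' \le 18$ where Lemma \ref{57H-18E} is available. One should also double-check that $m'$ is an integer after averaging, or else phrase the argument with the orbit-sum divisor (which has integer coefficients automatically) rather than the averaged class. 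Everything else is a short Diophantine check already packaged in Lemmas \ref{57H-18E}, \ref{exactly}, and \ref{unique}.
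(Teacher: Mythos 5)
Your proposal follows the paper's broad strategy (symmetrize a sub-divisor of $C$ and apply Lemma \ref{57H-18E}), but the symmetrization step has a genuine gap. The ``averaged class'' $d'H - m'\sum_i E_i$ with $m' = \frac{1}{10}\sum_i m_{B,i}$ need not have integer coefficients (as you flag yourself), but more fundamentally it need not be effective even when $m'$ is an integer: averaging over $S_{10}$ is a formal operation in $\text{Pic}(X)\otimes\mathbf{Q}$, not a geometric one, and there is no reason the resulting class carries an effective divisor. Your suggested fix --- passing to the orbit-sum divisor --- also fails: the orbit-sum has $H$-degree $N d_B$ where $N$ is the orbit size (which can be as large as $10!$), so Lemma \ref{57H-18E}, which requires $0<d<57$, simply does not apply to it; moreover, if distinct $S_{10}$-translates of $B$ share components of $C$, the orbit-sum (with multiplicity) need not be a sub-divisor of $C$ at all, so $|-3F-\widetilde{B}|$ is not known to be nonempty.

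The paper avoids both problems by taking $B$ to be a single \emph{reduced irreducible} component of $C$, not an arbitrary sub-sum. Each $S_{10}$-translate of $B$ is then an irreducible curve which, by uniqueness of $C$ in $|-3F|$ (Lemma \ref{unique}), must appear as a component of $C$, and the \emph{reduced union} $\widetilde{B}=\bigcup_{\sigma}\sigma B$ is a genuine sub-divisor of $C$ with $S_{10}$-symmetric class $Nd\,H-m''\sum_i E_i$, $0< Nd\le 57$, $0\le m''\le 18$. Lemma \ref{57H-18E} then forces $\widetilde{B}\sim -3F$, so $Nd=57$, and since a multiset of ten values can never have an $S_{10}$-orbit of size $3$, $19$, or $57$, one gets $N=1$, $d=57$, and $C=B$. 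Your step ruling out $E_i$-components via Lemma \ref{exactly} is correct, and your closing arithmetic ($n\mid\gcd(57,18)=3$ together with $h^{0}(X,\mathcal{O}_X(-F))=0$) is a clean alternative to the paper's orbit-size count; but both are only reached after a reduction to ``$C=nC_1$'' that the current symmetrization step does not validly establish.
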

		\begin{proof}
			Assume the contrary. Let $B$ be a reduced irreducible component of $C$, and let $B\sim dH-\sum_{j=1}^{10}m_{j}E_{j}$, so $|\sigma B|$ and $|-3F-\sigma B|$ are nonempty for all $\sigma\in S_{10}$ by Lemma \ref{symmetry} (by Lemma \ref{permutation} we can abuse notation to let $\sigma B$ be the unique effective divisor in $\sigma|B|$). 
			
			Any pair of curves respectively in $|\sigma B|$ and $|-3F-\sigma B|$ have product in $|-3F|$, but since $C\in|-3F|$ is the unique curve, any $C_{\sigma}\in|\sigma B|$ must be a component of $C$. Let $\widetilde{B}=\bigcup_{\sigma B}\sigma B$.
			
			Since both $|\widetilde{B}|$ and $|-3F-\widetilde{B}|$ are nonempty, by Lemma \ref{57H-18E} we must have $\widetilde{B}\sim-3F$. Then $|\{\sigma B\}|d=57$. But $\{b_{i}\}_{i=1}^{10}$ will never have exactly $3$, $19$, or $57$ permutations, so we must have $d=57$ and $|\{\sigma B\}|=1$. 
			
			Thus, $B=\widetilde{B}=C$, and $C$ is reduced and irreducible. 
		\end{proof}
		\begin{remark}
			The proof of Lemma \ref{unique} assumes the $10$ blown-up points are general in the sense that Condition \ref{generality} holds for $|-3F|$, Lemma \ref{57H-18E} and Lemma \ref{|-3F|} assume Condition \ref{generality-base-loci} holds for $|-3F|$, and Lemma \ref{exactly} assumes Condition \ref{generality} holds for each $|-3F-E_{i}|$.
		\end{remark}
		
		The arithmetic genus of $C$ is $\frac{1}{2}(57-1)(57-2)-10\frac{(18)(17)}{2}-10$, but it is possible that there could be singularities. However, this will not be an issue for our construction.
	
	\subsection{Cohomology sheaves and Homs}
		From now on, let $C$ be a smooth genus $g$ curve, let $\iota:C\to X$ be a map with (possibly singular) image in $|-nF|$ for $n\geq 3$, let $\mathcal{L}\in\textnormal{Pic}^{g-1}(C)$ be generic, let $\mathcal{G}:=\iota_{*}\mathcal{L}$, and let $\mathcal{P}:=i^{*}\mathcal{G}$. For instance, we could take $C$ to be the normalization of the unique curve in $|-3F|$. We will study the cohomology sheaves and Homs of $\mathcal{P}$.
		
		\begin{lemma}\label{concentrated-cohomology}
			$\textnormal{Hom}^{*}(\mathcal{E}_{i},\mathcal{G})$ vanishes for $i>2$ and is concentrated in degree $1$ for $i=1,2$. 
		\end{lemma}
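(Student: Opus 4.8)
The plan is to push everything down to line–bundle cohomology on $C$ and then exploit the genericity of $\mathcal{L}$. Since $C$ is an integral smooth projective curve and $\iota$ is non-constant with one-dimensional image, $\iota$ is a finite morphism, so $R^{q}\iota_{*}=0$ for $q>0$ and $H^{*}(X,\iota_{*}\mathcal{M})=H^{*}(C,\mathcal{M})$ for every coherent sheaf $\mathcal{M}$ on $C$. Writing each exceptional object as $\mathcal{E}_{i}=\mathcal{O}_{X}(-B_{i})$ (so $B_{1}=2F$, $B_{2}=F$, $B_{2+k}=D_{k}$, $B_{13}=0$) and using the projection formula $\iota_{*}\mathcal{L}\otimes\mathcal{O}_{X}(B_{i})\cong\iota_{*}(\mathcal{L}\otimes\iota^{*}\mathcal{O}_{X}(B_{i}))$, I would obtain
$$\textnormal{Hom}^{*}(\mathcal{E}_{i},\mathcal{G})=H^{*}(X,\iota_{*}\mathcal{L}\otimes\mathcal{O}_{X}(B_{i}))=H^{*}(C,\mathcal{L}\otimes\iota^{*}\mathcal{O}_{X}(B_{i})),$$
which is automatically concentrated in cohomological degrees $0$ and $1$.

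Next I would record the degree of $\iota^{*}\mathcal{O}_{X}(B_{i})$ on $C$, which is $(\iota_{*}[C])\cdot B_{i}=-nF\cdot B_{i}$ since the image of $C$ lies in $|-nF|$. Using the intersection numbers $F\cdot D_{i}=0$ (the computation implicit in the discussion opening this subsection) and $F^{2}=1$ (as in the proof of Proposition \ref{ample}), the relevant degrees are $0$ for $B_{i}\in\{D_{1},\dots,D_{10}\}$ and for $B_{13}=0$; $-n$ for $B_{2}=F$; and $-2n$ for $B_{1}=2F$. Hence $\mathcal{L}\otimes\iota^{*}\mathcal{O}_{X}(B_{i})$ has degree $g-1$ whenever $i>2$, and degree $g-1-n$, resp. $g-1-2n$, when $i=2$, resp. $i=1$ — in both of the latter cases strictly less than $g-1$, since $n\geq 3$.

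Finally I would invoke the classical facts on a smooth curve: a generic line bundle of degree $g-1$ lies off the theta divisor $W^{0}_{g-1}\subsetneq\textnormal{Pic}^{g-1}(C)$, hence has $h^{0}=h^{1}=0$ by Riemann--Roch; and for $d\leq g-1$ a generic line bundle of degree $d$ has $h^{0}=0$ (the image of $\textnormal{Sym}^{d}C\to\textnormal{Pic}^{d}(C)$ is a proper closed subset when $d<g$), so its cohomology sits in degree $1$ alone. This gives $\textnormal{Hom}^{*}(\mathcal{E}_{i},\mathcal{G})=0$ for $i>2$ and concentration in degree $1$ for $i=1,2$ (with $h^{1}$ equal to $2n$ and $n$ respectively, by Riemann--Roch). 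The one point to check is that a single generic $\mathcal{L}$ works for all thirteen indices at once: since each $\iota^{*}\mathcal{O}_{X}(B_{i})$ is a fixed line bundle, each condition on $\mathcal{L}$ is the pullback along a translation of the complement of a proper closed subset of a Picard variety, and there are only finitely many of them, so their intersection is still dense open. This is really the only subtlety; there is no genuine obstacle beyond confirming the two intersection numbers $F\cdot D_{i}=0$ and $F^{2}=1$.
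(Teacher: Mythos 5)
Your proof is correct and follows essentially the same route as the paper's: reduce $\textnormal{Hom}^{*}(\mathcal{E}_{i},\mathcal{G})$ to cohomology of a twist of $\mathcal{L}$ on $C$ via the projection formula, compute the degree of the twist from the intersection numbers $F^{2}=1$ and $F\cdot D_{i}=0$, and conclude using that a generic line bundle of degree $g-1$ has vanishing cohomology while a generic line bundle of degree $<g-1$ has $h^{0}=0$. You are somewhat more explicit than the paper in spelling out finiteness of $\iota$, the two intersection numbers, and the fact that a single generic $\mathcal{L}$ satisfies all thirteen open conditions simultaneously, but the mathematical content is the same.
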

		\begin{proof}			
			Note that 
			\begin{align*}
				\text{Hom}^{*}(\mathcal{O}_{X}(D),\mathcal{G}) &= H^{*}(C,\mathcal{L}(-D.C)) \\&=
				H^{*}(C,\mathcal{L}(nF.D))			
			\end{align*}
			A generic line bundle on $C$ of degree $d<g$ has no global sections, since the image of $C^{d}\to\text{Pic}^{g-1}(C)$ corresponds to classes with an effective divisor, but the domain has dimension $d$ and the codomain has strictly larger dimension $g$.
			
			If $D=0,-D_{i}$, then $nF.D=0$, so $\mathcal{L}(nF.D)$ is a generic line bundle of degree $g-1$ and so by the above and Riemann-Roch, $H^{0}(C,\mathcal{L}(nF.D))=H^{1}(\mathcal{L}(C,nF.D))=0$. If $D=-kF$, then $nF.D=-nk<0$, so $\mathcal{L}(nF.D)$ is a generic line bundle of degree less than $g-1$, and so has cohomology concentrated in degree $1$.  
		\end{proof}
	
		Now we are ready to give an extremely explicit description of $\mathcal{P}$.
		
		\begin{proposition}\label{two-cohomology-sheaves}
			$\mathcal{H}^{i}(\mathcal{P})$ is zero for $i\neq 0,1$, with 
			$$\mathcal{H}^{1}(\mathcal{P})=\mathcal{O}_{X}(-2F)\otimes H^{2}(X,\mathcal{O}_{X}(F))\otimes H^{1}(X,\mathcal{G}(F))$$
			and $\mathcal{H}^{0}(\mathcal{P})$ fits in the exact sequence
			$$0\to\mathcal{G}\to\mathcal{H}^{0}(\mathcal{P})\to\bigoplus_{k=1,2}\mathcal{O}_{X}(-kF)\otimes H^{1}(\mathcal{G}(kF))\to 0$$
		\end{proposition}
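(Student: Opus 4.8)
The plan is to compute $\mathcal{P}=i^{*}\mathcal{G}$ by iterating the projection triangle of Proposition \ref{projection-complex}, using Lemma \ref{concentrated-cohomology} to discard almost all of the exceptional objects. That iteration peels the objects of $\langle\mathcal{E}_{1},\dots,\mathcal{E}_{13}\rangle$ off starting from the right, and since $\mathrm{RHom}(\mathcal{E}_{j},\mathcal{G})=0$ for $j\geq 3$, the steps for $\mathcal{E}_{13},\dots,\mathcal{E}_{3}$ leave $\mathcal{G}$ untouched; hence $\mathcal{P}$ is obtained from $\mathcal{G}$ by first projecting past $\mathcal{E}_{2}=\mathcal{O}_{X}(-F)$ and then past $\mathcal{E}_{1}=\mathcal{O}_{X}(-2F)$, in that order. (The order matters: the relevant backward $\mathrm{Hom}$'s vanish, so this composition lands in $\mathcal{A}$, whereas $\mathrm{Hom}^{*}(\mathcal{E}_{1},\mathcal{E}_{2})\neq 0$ would obstruct the reversed composition.) Equivalently, in the complex of Proposition \ref{projection-complex} only the three summands built from $\mathcal{E}_{1}$ and $\mathcal{E}_{2}$ survive.

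First I would treat $\mathcal{M}$, the projection of $\mathcal{G}$ past $\mathcal{O}_{X}(-F)$, given by the triangle $\mathrm{RHom}(\mathcal{O}_{X}(-F),\mathcal{G})\otimes\mathcal{O}_{X}(-F)\to\mathcal{G}\to\mathcal{M}$. By Lemma \ref{concentrated-cohomology} the first term equals $\mathcal{O}_{X}(-F)\otimes H^{1}(X,\mathcal{G}(F))[-1]$, so $\mathcal{M}$ is the cone of a morphism from the $[-1]$-shift of a sheaf to a sheaf, hence is itself a sheaf fitting into
\[0\to\mathcal{G}\to\mathcal{M}\to\mathcal{O}_{X}(-F)\otimes H^{1}(X,\mathcal{G}(F))\to 0\]
with extension class the evaluation. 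Twisting by $\mathcal{O}_{X}(2F)$ and running the long exact cohomology sequence, using that $H^{*}(X,\mathcal{G}(2F))$ is concentrated in degree $1$ (Lemma \ref{concentrated-cohomology}) and $H^{*}(X,\mathcal{O}_{X}(F))=\mathrm{Hom}^{*}(\mathcal{E}_{1},\mathcal{E}_{2})$ in degree $2$ (Lemma \ref{vanishing}), I obtain
\[\mathrm{RHom}(\mathcal{O}_{X}(-2F),\mathcal{M})=H^{1}(X,\mathcal{G}(2F))[-1]\oplus\big(H^{2}(X,\mathcal{O}_{X}(F))\otimes H^{1}(X,\mathcal{G}(F))\big)[-2],\]
with in particular no degree-$0$ part.

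Projecting $\mathcal{M}$ past $\mathcal{O}_{X}(-2F)$ then places $\mathcal{P}$ in the triangle
\[\mathcal{O}_{X}(-2F)\otimes\Big(H^{1}(X,\mathcal{G}(2F))[-1]\oplus\big(H^{2}(X,\mathcal{O}_{X}(F))\otimes H^{1}(X,\mathcal{G}(F))\big)[-2]\Big)\to\mathcal{M}\to\mathcal{P},\]
and I would read off the cohomology sheaves from the long exact sequence: the first object has cohomology only in degrees $1$ and $2$ while $\mathcal{M}$ is a sheaf, so every connecting map out of those cohomologies targets $\mathcal{H}^{i}(\mathcal{M})$ with $i\geq 1$, which vanishes. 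This forces $\mathcal{H}^{i}(\mathcal{P})=0$ for $i\neq 0,1$, yields $\mathcal{H}^{1}(\mathcal{P})=\mathcal{O}_{X}(-2F)\otimes H^{2}(X,\mathcal{O}_{X}(F))\otimes H^{1}(X,\mathcal{G}(F))$, and leaves $0\to\mathcal{M}\to\mathcal{H}^{0}(\mathcal{P})\to\mathcal{O}_{X}(-2F)\otimes H^{1}(X,\mathcal{G}(2F))\to 0$. Splicing this with the sequence for $\mathcal{M}$, the quotient $\mathcal{H}^{0}(\mathcal{P})/\mathcal{G}$ is an extension of $\mathcal{O}_{X}(-2F)\otimes H^{1}(X,\mathcal{G}(2F))$ by $\mathcal{O}_{X}(-F)\otimes H^{1}(X,\mathcal{G}(F))$, with class in $\mathrm{Ext}^{1}_{X}(\mathcal{O}_{X}(-2F),\mathcal{O}_{X}(-F))\otimes H^{1}(X,\mathcal{G}(2F))^{\vee}\otimes H^{1}(X,\mathcal{G}(F))$; since $\mathrm{Ext}^{1}_{X}(\mathcal{O}_{X}(-2F),\mathcal{O}_{X}(-F))=H^{1}(X,\mathcal{O}_{X}(F))=0$ by Lemma \ref{vanishing}, it splits, giving the claimed sequence for $\mathcal{H}^{0}(\mathcal{P})$.

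The part that needs care is the bookkeeping of cohomological degrees — keeping $\mathcal{M}$ a genuine sheaf and confining $\mathcal{P}$ to degrees $0$ and $1$ — but this is forced entirely by the degree-concentration statements in Lemmas \ref{concentrated-cohomology} and \ref{vanishing}. The one other essential input is the vanishing $H^{1}(X,\mathcal{O}_{X}(F))=0$ from Lemma \ref{vanishing}, which is exactly what splits the extension inside $\mathcal{H}^{0}(\mathcal{P})$ into the asserted direct sum; this is the point at which the modified embedding of the phantom pays off over Krah's original one.
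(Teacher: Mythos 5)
Your argument is correct, and it takes a genuinely different route from the paper — one that the paper itself flags as an alternative in the Remark following the proposition. The paper applies Proposition \ref{projection-complex} directly to build the small explicit double complex with columns $\mathcal{G}^{\bullet}$, $\bigoplus_{i_1}\mathcal{E}_1^\bullet\oplus\bigoplus_{i_2}\mathcal{E}_2^\bullet$, $\bigoplus_{i_1,j}\mathcal{E}_1^\bullet$ and reads off the cohomology sheaves of the totalization by hand. You instead iterate the one-step mutation triangle twice (past $\mathcal{E}_2=\mathcal{O}_X(-F)$, then $\mathcal{E}_1=\mathcal{O}_X(-2F)$, in the order dictated by peeling the exceptional collection from the right), tracking cohomology sheaves with the long exact sequence each time. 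Your approach is lower-machinery — it avoids invoking the full injective-resolution totalization formalism — but it costs you an extra step at the end: the filtration quotient $\mathcal{H}^{0}(\mathcal{P})/\mathcal{G}$ comes out a priori as an extension of $\mathcal{O}_X(-2F)\otimes H^1(\mathcal{G}(2F))$ by $\mathcal{O}_X(-F)\otimes H^1(\mathcal{G}(F))$, and you must appeal to the vanishing $\mathrm{Ext}^1_X(\mathcal{O}_X(-2F),\mathcal{O}_X(-F))=H^1(X,\mathcal{O}_X(F))=0$ from Lemma \ref{vanishing} to split it. The paper's totalization yields the direct sum structure of the cokernel directly, since the $\mathcal{E}_1$- and $\mathcal{E}_2$-pieces of $P^{-1,\bullet}$ do not talk to each other in the relevant differential. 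Both routes are sound; yours has the virtue of making very visible where the modified phantom (with its convenient degree concentrations) is paying off, and the paper's has the virtue of scaling to situations with more surviving $\mathrm{Hom}$'s where bookkeeping an iterated tower of cones would become unwieldy.
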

		\begin{proof}
			We will use Proposition \ref{projection-complex} to compute the projection $i^{*}\mathcal{G}=\mathcal{P}$. 
			
			Let $\mathcal{E}_{1}^{\bullet},\mathcal{E}_{2}^{\bullet},\mathcal{G}^{\bullet}$ be quasi-coherent injective resolutions for $\mathcal{E}_{1}=\mathcal{O}_{X}(-2F) $, $\mathcal{E}_{2}=\mathcal{O}_{X}(-F)$, and $\mathcal{G}$ respectively.	For $k=1,2$, let $i_{k}$ index a basis for $\text{Hom}^{1}(\mathcal{O}_{X}(-kF),\mathcal{G})=H^{*}(X,\mathcal{G}(kF))$, and let $j$ index a basis for $\text{Hom}^{2}(\mathcal{O}_{X}(-2F),\mathcal{O}_{X}(-F))=H^{2}(X,\mathcal{O}_{X}(F))$. 
			
			Then by Proposition \ref{projection-complex} and Lemma \ref{concentrated-cohomology},  the totalization of the following double complex (which we have rotated 90 degrees clockwise for typographical convenience) is quasi-isomorphic to $\mathcal{P}$:
			$$\begin{tikzcd}
				&                                                                                                    &                                                                                                    & {\bigoplus_{i_{1},j}\mathcal{E}_{1}^{0}} \arrow[d] \arrow[r]                                       & ... \\
				& \bigoplus_{i_{1}}\mathcal{E}_{1}^{0}\oplus\bigoplus_{i_{2}}\mathcal{E}_{2}^{0} \arrow[d] \arrow[r] & \bigoplus_{i_{1}}\mathcal{E}_{1}^{1}\oplus\bigoplus_{i_{2}}\mathcal{E}_{2}^{1} \arrow[d] \arrow[r] & \bigoplus_{i_{1}}\mathcal{E}_{1}^{2}\oplus\bigoplus_{i_{2}}\mathcal{E}_{2}^{2} \arrow[d] \arrow[r] & ... \\
				\mathcal{G}^{0} \arrow[r] & \mathcal{G}^{1} \arrow[r]                                                                                    & \mathcal{G}^{2} \arrow[r]                                                                                    & \mathcal{G}^{3} \arrow[r]                                                                                    & ...
			\end{tikzcd}$$
			where the horizontal maps are induced by the maps of the resolutions and the vertical maps correspond to the basis elements described above.
			
			Call the totalization $T^{\bullet}$. Note 
			$$\mathcal{T}^{m}=\mathcal{G}^{m}\oplus\bigoplus_{i_{1}}\mathcal{E}_{1}^{m}\oplus\bigoplus_{i_{2}}\mathcal{E}_{2}^{m}\oplus\bigoplus_{i_{1},j}\mathcal{E}_{1}^{m-1} $$
			so $\mathcal{H}^{m}(\mathcal{T}^{\bullet})$ is zero for $m\geq 2$ since each of $\mathcal{E}_{1}^{\bullet},\mathcal{E}_{2}^{\bullet},\mathcal{G}^{\bullet}$ is exact in degrees $m\geq 1$, and similarly, 
			$$\mathcal{H}^{1}(\mathcal{T}^{\bullet})=\bigoplus_{i_{1},j}\mathcal{H}^{0}\left(\mathcal{E}_{1}^{\bullet}\right)=\bigoplus_{i_{1},j}\mathcal{E}_{1} $$
			Finally, 
			$$\mathcal{H}^{0}(\mathcal{T}^{\bullet})=\ker\left(\mathcal{G}^{0}\oplus\bigoplus_{i_{1}}\mathcal{E}_{1}^{0}\oplus\bigoplus_{i_{2}}\mathcal{E}_{2}^{0}\to\mathcal{G}^{1}\oplus\bigoplus_{i_{1}}\mathcal{E}_{1}^{1}\oplus\bigoplus_{i_{2}}\mathcal{E}_{2}^{1}\right) $$
			Note that $\mathcal{G}\to\mathcal{G}^{0}$ induces a map into this kernel, and taking the quotient by its image we have 
			\begin{align*}
				\text{coker}(\mathcal{G}\to\mathcal{H}^{0}(\mathcal{T}^{\bullet})) &=
				\ker\left(\bigoplus_{i_{1}}\mathcal{E}_{1}^{0}\oplus\bigoplus_{i_{2}}\mathcal{E}_{2}^{0}\to\bigoplus_{i_{1}}\mathcal{E}_{1}^{1}\oplus\bigoplus_{i_{2}}\mathcal{E}_{2}^{1}\right) \\&=
				\bigoplus_{i_{1}}\mathcal{E}_{1}\oplus\bigoplus_{i_{1}}\mathcal{E}_{2}
			\end{align*}			 
			where the last equality holds by exactness of the resolutions $\mathcal{E}_{k}\to\mathcal{E}_{k}^{\bullet}$.
			
		\end{proof}
		\begin{remark}
			Since $\mathcal{G}\in\langle\mathcal{E}_{3},...,\mathcal{E}_{10}\rangle^{\perp}$, its projection into the phantom could be computed using the two evaluation triangles to project $\mathcal{G}$ past $\mathcal{E}_{0}$ and $\mathcal{E}_{1}$. Though such a proof would be somewhat more involved than appealing to Proposition \ref{projection-complex}, it would be a lower-machinery proof that the exceptional collection is not full and that the phantom exists.
		\end{remark}
		The following will show that $\mathcal{P}$ is a simple, universally gluable object.
		\begin{proposition}\label{Hom(P,P)}
			$$\textnormal{Hom}_{X}^{*}(\mathcal{P},\mathcal{P})=\begin{cases}
				\mathbf{C} & *=0 \\
				\textnormal{Ext}_{X}^{1}(\iota_{*}\mathcal{L},\iota_{*}\mathcal{L}) & *=1 \\
				\mathbf{C}^{4n^{2}}\oplus\textnormal{Ext}_{X}^{2}(\iota_{*}\mathcal{L},\iota_{*}\mathcal{L}) & *=2 \\
				\mathbf{C}^{3n^{2}} & *=3 \\
				0 & \textnormal{else} \\
			\end{cases} $$
		\end{proposition}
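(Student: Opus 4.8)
The plan is to compute $\textnormal{Hom}^{*}_{X}(\mathcal{P},\mathcal{P})=\textnormal{Hom}^{*}_{X}(\mathcal{G},i^{*}\mathcal{G})$ (by adjunction) using the spectral sequence of Corollary~\ref{spectral-sequence} applied with $K'=K=\mathcal{G}=\iota_{*}\mathcal{L}$. The point that makes this manageable is that the $E_{1}$ page is extremely sparse. In each summand indexed by a chain $a_{0}<\dots<a_{p}$ the last factor $\textnormal{Hom}^{k_{p}}(\mathcal{E}_{a_{p}},\mathcal{G})$ vanishes unless $a_{p}\in\{1,2\}$ by Lemma~\ref{concentrated-cohomology}, and is then concentrated in degree $k_{p}=1$. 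Hence every contributing chain lies in $\{1,2\}$, so only $p=0$ (chains $(1)$ and $(2)$) and $p=1$ (the chain $(1,2)$) occur, and the only possibly nonzero columns of $E_{1}$ are $E_{1}^{0,\bullet}$, $E_{1}^{-1,\bullet}$, and $E_{1}^{-2,\bullet}$.

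For the zeroth column, $E_{1}^{0,q}=\textnormal{Ext}^{q}_{X}(\mathcal{G},\mathcal{G})$. Since $\iota$ is finite and $C$ is smooth, $\mathcal{G}$ is a Cohen--Macaulay sheaf of codimension $1$ on the smooth surface $X$, so $\mathcal{E}xt^{\geq 2}_{X}(\mathcal{G},-)=0$ and the surviving sheaves $\mathcal{H}om(\mathcal{G},\mathcal{G})$, $\mathcal{E}xt^{1}(\mathcal{G},\mathcal{G})$ are supported on the curve $\iota(C)$; the local-to-global spectral sequence then forces $\textnormal{Ext}^{q}_{X}(\mathcal{G},\mathcal{G})=0$ for $q\geq 3$. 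Also $\textnormal{Hom}^{0}_{X}(\mathcal{G},\mathcal{G})=\mathbf{C}$: the image $\iota(C)$ is integral and $\mathcal{G}$ is a rank-one torsion-free sheaf on it, so a nonzero endomorphism of $\mathcal{G}$ is injective and $\textnormal{End}_{X}(\mathcal{G})$ is a finite-dimensional integral domain over $\mathbf{C}$, hence $\mathbf{C}$. The groups $\textnormal{Ext}^{1}_{X}(\mathcal{G},\mathcal{G})$ and $\textnormal{Ext}^{2}_{X}(\mathcal{G},\mathcal{G})$ are left as in the statement.

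For the columns $p=-1,-2$ I would compute the remaining Hom groups. As in the proof of Lemma~\ref{concentrated-cohomology} (restriction of $\mathcal{L}$ twisted to negative degree) one gets $\textnormal{Hom}^{*}(\mathcal{E}_{1},\mathcal{G})=\mathbf{C}^{2n}[-1]$ and $\textnormal{Hom}^{*}(\mathcal{E}_{2},\mathcal{G})=\mathbf{C}^{n}[-1]$. By Serre duality on $X$, $\textnormal{Hom}^{k}(\mathcal{G},\mathcal{E}_{j})\cong H^{2-k}(C,\mathcal{L}\otimes\iota^{*}(\omega_{X}\otimes\mathcal{E}_{j}^{\vee}))^{\vee}$; using $F^{2}=1$ and $K_{X}\cdot F=-3$, these restricted line bundles on $C$ have degree $g-1+n$ for $j=1$ and $g-1+2n$ for $j=2$, both $>g-1$, so their $H^{1}$ vanishes for generic $\mathcal{L}$ (the Serre-dual bundle on $C$ then has degree $<g-1$) and Riemann--Roch gives $\textnormal{Hom}^{*}(\mathcal{G},\mathcal{E}_{1})=\mathbf{C}^{n}[-2]$ and $\textnormal{Hom}^{*}(\mathcal{G},\mathcal{E}_{2})=\mathbf{C}^{2n}[-2]$. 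Finally $\textnormal{Hom}^{*}(\mathcal{E}_{1},\mathcal{E}_{2})=\mathbf{C}^{\chi(F)}[-2]=\mathbf{C}^{3}[-2]$ by Lemma~\ref{vanishing} and Lemma~\ref{Euler-characteristics}. Assembling the tensor products, $E_{1}^{-1,q}=0$ except $E_{1}^{-1,3}=\mathbf{C}^{2n^{2}}\oplus\mathbf{C}^{2n^{2}}=\mathbf{C}^{4n^{2}}$ (from $a_{0}=1$ and $a_{0}=2$), and $E_{1}^{-2,q}=0$ except $E_{1}^{-2,5}=\mathbf{C}^{3n^{2}}$ (from the chain $(1,2)$).

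It remains to see that the spectral sequence degenerates at $E_{1}$: any differential touching a surviving entry either lands in or comes from a bidegree where the page already vanishes --- for instance $d_{1}\colon E_{1}^{-1,3}\to E_{1}^{0,3}=\textnormal{Ext}^{3}_{X}(\mathcal{G},\mathcal{G})=0$ and $d_{1}\colon E_{1}^{-2,5}\to E_{1}^{-1,5}=0$, while every $d_{r}$ with $r\geq 2$ moves the first index into the empty columns $p\geq 1$ (or into $\textnormal{Ext}^{4}_{X}(\mathcal{G},\mathcal{G})=0$). Reading off the total degrees $p+q$ then gives exactly the four nonzero groups in the statement (with $\textnormal{Ext}^{*}_{X}(\iota_{*}\mathcal{L},\iota_{*}\mathcal{L})=\textnormal{Ext}^{*}_{X}(\mathcal{G},\mathcal{G})$), and in particular $\textnormal{Hom}^{*}_{X}(\mathcal{P},\mathcal{P})$ is zero in negative degrees and is $\mathbf{C}$ in degree $0$, so $\mathcal{P}$ is simple and universally gluable. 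I expect the main work to be the Serre-dual computation of $\textnormal{Hom}^{*}(\mathcal{G},\mathcal{E}_{1})$ and $\textnormal{Hom}^{*}(\mathcal{G},\mathcal{E}_{2})$, in particular verifying that the relevant $h^{1}$'s vanish for generic $\mathcal{L}$; a secondary point is the simplicity of $\iota_{*}\mathcal{L}$ in the case (such as the $|-3F|$ curve) where $\iota$ is only generically injective onto its image, which is handled as above.
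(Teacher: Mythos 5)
Your proof is correct and follows essentially the same route as the paper: apply the spectral sequence of Corollary~\ref{spectral-sequence}, use Lemma~\ref{concentrated-cohomology} to restrict to chains in $\{1,2\}$, compute $\textnormal{Hom}^{*}(\mathcal{E}_{j},\mathcal{G})$ and $\textnormal{Hom}^{*}(\mathcal{G},\mathcal{E}_{j})$ by restriction to $C$ and Serre duality, and observe degeneration for degree reasons. The only substantive difference is the proof that $\textnormal{Hom}^{0}(\mathcal{G},\mathcal{G})=\mathbf{C}$: you argue that $\textnormal{End}(\mathcal{G})$ is a finite integral domain over $\mathbf{C}$ (using that $\mathcal{G}$ is rank one torsion-free on the integral curve $\iota(C)$), whereas the paper uses adjunction, $\textnormal{Hom}_{X}(\iota_{*}\mathcal{L},\iota_{*}\mathcal{L})=\textnormal{Hom}_{C}(\iota^{*}\iota_{*}\mathcal{L},\mathcal{L})=\textnormal{Hom}_{C}(\mathcal{L},\mathcal{L})$, which handles the case of $\iota$ non-injective somewhat more cleanly; both arguments are valid.
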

		\begin{proof}
			By Lemma \ref{concentrated-cohomology},  $\text{Hom}_{X}^{*}(\mathcal{E}_{i},\iota_{*}\mathcal{L})=0$ for $\mathcal{E}_{i}\neq \mathcal{O}_{X}(-F),\mathcal{O}_{X}(-2F)$. Furthermore, for $k=1,2$ we have 
			\begin{align*}
				\text{Hom}_{X}^{*}(\mathcal{O}_{X}(-kF),\iota_{*}\mathcal{L}) &=
				H^{*}(X,\iota_{*}\mathcal{L}(kF)) \\&=
				H^{*}(C,\mathcal{L}(kF.C))\\&=
				H^{*}(C,\mathcal{L}(-nk)) \\&=
				\mathbf{C}^{nk}[-1]
			\end{align*} 
			\begin{align*}
				\text{Hom}_{X}^{*}(\iota_{*}\mathcal{L},\mathcal{O}_{X}(-kF)) &=
				\text{Hom}_{X}^{2-*}(\mathcal{O}_{X}(-kF),\iota_{*}\mathcal{L}\otimes\omega_{X}) \\&= 
				H^{2-*}(X,\iota_{*}\mathcal{L}(kF+K_{X})) \\&=
				H^{2-*}(C,\mathcal{L}((kF+K_{X}).C))\\&=
				H^{2-*}(C,\mathcal{L}(n(3-k))) \\&=
				\mathbf{C}^{n(3-k)}[-2]
			\end{align*}		
			Thus, applying the spectral sequence of Corollary \ref{spectral-sequence} to $\text{Hom}^{*}(i^{*}\mathcal{G},i^{*}\mathcal{G})$, the $E_{1}^{0,q}$ terms correspond to $\text{Hom}_{X}^{*}(\iota_{*}\mathcal{L},\iota_{*}\mathcal{L})$, and the only other nonzero terms are: 
			$$E_{1}^{-1,3}=\bigoplus_{k=1,2}\text{Hom}_{X}^{1}(\mathcal{O}_{X}(-kF),\iota_{*}\mathcal{L})\otimes\text{Hom}_{X}^{2}(\iota_{*}\mathcal{L},\mathcal{O}_{X}(-kF))=\mathbf{C}^{4n^{2}}$$
			$$E_{1}^{-2,5}=\text{Hom}_{X}^{1}(\mathcal{O}_{X}(-F),\iota_{*}\mathcal{L})\otimes\text{Hom}^{2}(\mathcal{O}_{X}(-2F),\mathcal{O}_{X}(-F))\otimes\text{Hom}_{X}^{2}(\iota_{*}\mathcal{L},\mathcal{O}_{X}(-2F))=\mathbf{C}^{3n^{2}} $$ 
			so all differentials vanish for degree reasons. Finally, note 
			\begin{align*}
				\text{Hom}_{X}(\iota_{*}\mathcal{L},\iota_{*}\mathcal{L}) &=
				\text{Hom}_{C}(\iota^{*}\iota_{*}\mathcal{L},\mathcal{L}) \\&=
				\text{Hom}_{C}(\mathcal{L},\mathcal{L}) \\&=
				\mathbf{C}
			\end{align*}
		\end{proof}
		\begin{corollary}
			Define $\mathcal{P}'$ by the same construction as $\mathcal{P}$ but with a different choice of $\iota',\mathcal{C}',\mathcal{L}'$. Then 
			$$\textnormal{Hom}_{X}^{*}(\mathcal{P},\mathcal{P}')=\begin{cases}
				0 & *=0 \\
				\textnormal{Ext}_{X}^{1}(\iota_{*}\mathcal{L},\iota'_{*}\mathcal{L}') & *=1 \\
				\mathbf{C}^{4n^{2}}\oplus\textnormal{Ext}_{X}^{2}(\iota_{*}\mathcal{L},\iota'_{*}\mathcal{L}') & *=2 \\
				\mathbf{C}^{3n^{2}} & *=3 \\
				0 & \textnormal{else} \\
			\end{cases} $$
		\end{corollary}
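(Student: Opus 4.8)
The plan is to repeat verbatim the argument of Proposition \ref{Hom(P,P)}, carrying along the second pair of data $(\iota',\mathcal{C}',\mathcal{L}')$ in place of one copy of $(\iota,\mathcal{C},\mathcal{L})$. First I would record that Lemma \ref{concentrated-cohomology} applies equally to $\mathcal{G}' := \iota'_{*}\mathcal{L}'$, since its only input is that the image curve lies in some $|-nF|$ with $n\geq 3$ and that $\mathcal{L}'$ is a generic line bundle of degree $g'-1$; hence $\textnormal{Hom}^{*}_{X}(\mathcal{E}_{i},\iota'_{*}\mathcal{L}') = 0$ for $\mathcal{E}_{i}\neq\mathcal{O}_{X}(-F),\mathcal{O}_{X}(-2F)$ and is concentrated in degree $1$ for $i=1,2$. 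The same Serre-duality and restriction-to-the-curve computations then give $\textnormal{Hom}^{*}_{X}(\mathcal{O}_{X}(-kF),\iota'_{*}\mathcal{L}') = \mathbf{C}^{n'k}[-1]$ and $\textnormal{Hom}^{*}_{X}(\iota_{*}\mathcal{L},\mathcal{O}_{X}(-kF)) = \mathbf{C}^{n(3-k)}[-2]$, where $n,n'$ are the respective degrees (over $F$) of the two curves.

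Next I would feed these into the spectral sequence of Corollary \ref{spectral-sequence} applied to $\textnormal{Hom}^{*}(i^{*}\mathcal{G},i^{*}\mathcal{G}')$. The $E_{1}^{0,q}$ column is $\textnormal{Hom}^{*}_{X}(\iota_{*}\mathcal{L},\iota'_{*}\mathcal{L}')$; the only other surviving terms are
$$E_{1}^{-1,3} = \bigoplus_{k=1,2}\textnormal{Hom}^{1}_{X}(\mathcal{O}_{X}(-kF),\iota'_{*}\mathcal{L}')\otimes\textnormal{Hom}^{2}_{X}(\iota_{*}\mathcal{L},\mathcal{O}_{X}(-kF))$$
and
$$E_{1}^{-2,5} = \textnormal{Hom}^{1}_{X}(\mathcal{O}_{X}(-F),\iota'_{*}\mathcal{L}')\otimes\textnormal{Hom}^{2}(\mathcal{O}_{X}(-2F),\mathcal{O}_{X}(-F))\otimes\textnormal{Hom}^{2}_{X}(\iota_{*}\mathcal{L},\mathcal{O}_{X}(-2F)).$$
As in Proposition \ref{Hom(P,P)} all differentials vanish for degree reasons, so the spectral sequence degenerates at $E_{1}$ and one reads off the graded pieces. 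The one genuinely different input is the computation of $\textnormal{Hom}^{0}_{X}(\iota_{*}\mathcal{L},\iota'_{*}\mathcal{L}')$: here the trick $\textnormal{Hom}_{X}(\iota_{*}\mathcal{L},\iota_{*}\mathcal{L}) = \textnormal{Hom}_{C}(\iota^{*}\iota_{*}\mathcal{L},\mathcal{L}) = \mathbf{C}$ no longer quite applies, so I would instead argue directly that $\textnormal{Hom}_{X}(\iota_{*}\mathcal{L},\iota'_{*}\mathcal{L}') = 0$. For generic choices the curves $C$ and $C'$ are distinct, and a nonzero morphism $\iota_{*}\mathcal{L}\to\iota'_{*}\mathcal{L}'$ between pure $1$-dimensional sheaves would have image a nonzero sheaf supported on $C\cap C'$, which is $0$-dimensional, contradicting purity of the target (a subsheaf of $\iota'_{*}\mathcal{L}'$ is torsion-free along $C'$); hence the $\textnormal{Hom}$ is $0$ and the $*=0$ entry vanishes. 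The case $\mathcal{C} = \mathcal{C}'$ but $\mathcal{L}\not\cong\mathcal{L}'$ is handled by the same restriction identity, giving $\textnormal{Hom}_{C}(\mathcal{L},\mathcal{L}') = 0$ for non-isomorphic line bundles of the same degree.

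I do not expect a serious obstacle: the only point requiring more than mechanical transcription is the vanishing of $\textnormal{Hom}^{0}$, and even there the purity/support argument above is routine. The constants $4n^{2}$ and $3n^{2}$ in the statement presumably carry the implicit convention that $\mathcal{P}$ and $\mathcal{P}'$ are built from curves of the same degree $n$ over $F$ (otherwise one gets $n n' + n'\cdot 2n = \dots$ and $3 n n'$), so I would either assume $n = n'$ as in the ambient setup of the section or simply replace the constants by the evident bilinear expressions; I would add a remark to that effect. One should also note which genericity hypotheses on the pair of curves are invoked — that $C\neq C'$ (or $\mathcal{L}\not\cong\mathcal{L}'$) and that both line bundles are generic of degree one less than the respective genus — mirroring the remarks accompanying Proposition \ref{Hom(P,P)}.
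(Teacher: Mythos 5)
Your proposal is correct and follows essentially the same route as the paper: you reduce via the spectral sequence of Corollary \ref{spectral-sequence} to the single new computation $\textnormal{Hom}^{0}_{X}(\iota_{*}\mathcal{L},\iota'_{*}\mathcal{L}')=0$, which you prove by a purity-of-support argument for the image of a hypothetical nonzero morphism, whereas the paper applies adjunction to land in $\textnormal{Hom}_{C'}(\iota'^{*}\iota_{*}\mathcal{L},\mathcal{L}')$ and then observes the pullback is a torsion sheaf on $C'$ --- the two are equivalent and equally routine, and you handle the $C=C'$ subcase the same way as the paper. Your remark that the constants $4n^{2}$ and $3n^{2}$ implicitly assume $n=n'$ is a fair point the paper carries silently.
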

		\begin{proof}
			We have $\text{Hom}_{X}^{*}(\mathcal{E}_{i},\iota'_{*}\mathcal{L}')=\text{Hom}_{X}^{*}(\mathcal{E}_{i},\iota_{*}\mathcal{L})$ and $\text{Hom}_{X}^{*}(\iota'_{*}\mathcal{L}',\mathcal{E}_{i})=\text{Hom}_{X}^{*}(\iota_{*}\mathcal{L},\mathcal{E}_{i})$, so the spectral sequence computation is the same as in the proof of Proposition \ref{Hom(P,P)}. The only difference is 
			\begin{align*}
				\text{Hom}_{X}(\iota_{*}\mathcal{L},\iota'_{*}\mathcal{L}') &=
				\text{Hom}_{C'}(\iota'^{*}\iota_{*}\mathcal{L},\mathcal{L}') \\&=
				\begin{cases}
					\text{Hom}_{C'}((\iota_{*}\mathcal{L})|_{C\cap C'},\mathcal{L}') & C\neq C' \\
					\text{Hom}_{C'}(\mathcal{L},\mathcal{L}') & C=C'
				\end{cases} \\&=
				0
			\end{align*}
			since there are no maps from a torsion sheaf to a locally free sheaf, and no maps between distinct line bundles of the same degree on a curve.
		\end{proof}

	\subsection{Deformation theory}
		Again we have that $\mathcal{P}$ is a simple universally-gluable object and so lies in the algebraic stack $s\mathcal{M}(\mathcal{A})$ of such objects.
	
		Furthermore, $\text{Hom}^{1}(\mathcal{P},\mathcal{P})$ corresponds to deformations of $\mathcal{P}$, so equality with $\text{Ext}^{1}(\mathcal{G},\mathcal{G})$ means that our objects form an open substack of $s\mathcal{M}(\mathcal{A})$.
		
		We will show this precisely.
		
		\begin{lemma}
			For $\iota:C\to X$ a closed immersion and $\mathcal{L},\mathcal{M}\in\textnormal{Pic}(C)$, there is an isomorphism
			$$(L^{j}\iota^{*}\iota_{*}\mathcal{L})\otimes\mathcal{M}\xrightarrow{\sim}L^{j}\iota^{*}\iota_{*}(\mathcal{L}\otimes\mathcal{M})$$
		\end{lemma}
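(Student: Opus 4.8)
The plan is to compute $L^{j}\iota^{*}\iota_{*}\mathcal{L}$ via an explicit resolution and then move $\mathcal{M}$ across the derived functors using only that a line bundle is flat. Throughout I regard sheaves on $C$ as sheaves on $X$ supported on $C$ where convenient, write $\mathcal{O}_{C}$ for $\iota_{*}\mathcal{O}_{C}$, and use that $\mathcal{O}_{X}\to\mathcal{O}_{C}$ is surjective. Two elementary facts about a surjection of (sheaves of) rings $\mathcal{O}_{X}\to\mathcal{O}_{C}$ will be used repeatedly: first, for $\mathcal{O}_{C}$-modules $\mathcal{F},\mathcal{G}$ one has $\mathcal{F}\otimes_{\mathcal{O}_{X}}\mathcal{G}=\mathcal{F}\otimes_{\mathcal{O}_{C}}\mathcal{G}$ (the extra relations defining the $\mathcal{O}_{C}$-tensor already hold over $\mathcal{O}_{X}$, since every local section of $\mathcal{O}_{C}$ lifts to one of $\mathcal{O}_{X}$), so in particular $\iota_{*}(\mathcal{L}\otimes_{\mathcal{O}_{C}}\mathcal{M})=\iota_{*}\mathcal{L}\otimes_{\mathcal{O}_{C}}\iota_{*}\mathcal{M}$; second, for an $\mathcal{O}_{X}$-module $\mathcal{P}$ and an $\mathcal{O}_{C}$-module $\mathcal{G}$ one has $\mathcal{P}\otimes_{\mathcal{O}_{X}}\mathcal{G}=(\mathcal{O}_{C}\otimes_{\mathcal{O}_{X}}\mathcal{P})\otimes_{\mathcal{O}_{C}}\mathcal{G}=\iota^{*}\mathcal{P}\otimes_{\mathcal{O}_{C}}\mathcal{G}$ by associativity of tensor products.

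Concretely, I would choose a resolution $\mathcal{P}_{\bullet}\to\mathcal{O}_{C}$ by flat $\mathcal{O}_{X}$-modules (this exists locally on $X$, which suffices since the isomorphism produced below will be canonical and hence glue; on the quasi-projective $X$ of this paper one can even take it globally and locally free). Then $L\iota^{*}\iota_{*}\mathcal{L}\simeq\mathcal{P}_{\bullet}\otimes_{\mathcal{O}_{X}}\iota_{*}\mathcal{L}$, and by the second fact above this complex equals $\iota^{*}\mathcal{P}_{\bullet}\otimes_{\mathcal{O}_{C}}\mathcal{L}$, a complex of $\mathcal{O}_{C}$-modules; in particular $L^{j}\iota^{*}\iota_{*}\mathcal{L}=\mathcal{H}_{j}(\iota^{*}\mathcal{P}_{\bullet}\otimes_{\mathcal{O}_{C}}\mathcal{L})$ as $\mathcal{O}_{C}$-modules. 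Running the same computation with $\mathcal{L}\otimes_{\mathcal{O}_{C}}\mathcal{M}$ in place of $\mathcal{L}$, and using the first fact to rewrite $\iota_{*}(\mathcal{L}\otimes_{\mathcal{O}_{C}}\mathcal{M})$, gives $L^{j}\iota^{*}\iota_{*}(\mathcal{L}\otimes_{\mathcal{O}_{C}}\mathcal{M})=\mathcal{H}_{j}\big(\iota^{*}\mathcal{P}_{\bullet}\otimes_{\mathcal{O}_{C}}\mathcal{L}\otimes_{\mathcal{O}_{C}}\mathcal{M}\big)$.

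Now the key step: since $\mathcal{M}$ is locally free of rank one, hence flat over $\mathcal{O}_{C}$, the functor $-\otimes_{\mathcal{O}_{C}}\mathcal{M}$ is exact and therefore commutes with passing to homology. Applying it to the complex $\iota^{*}\mathcal{P}_{\bullet}\otimes_{\mathcal{O}_{C}}\mathcal{L}$ yields the canonical isomorphism
\[
(L^{j}\iota^{*}\iota_{*}\mathcal{L})\otimes_{\mathcal{O}_{C}}\mathcal{M}=\mathcal{H}_{j}(\iota^{*}\mathcal{P}_{\bullet}\otimes_{\mathcal{O}_{C}}\mathcal{L})\otimes_{\mathcal{O}_{C}}\mathcal{M}\xrightarrow{\ \sim\ }\mathcal{H}_{j}\big(\iota^{*}\mathcal{P}_{\bullet}\otimes_{\mathcal{O}_{C}}\mathcal{L}\otimes_{\mathcal{O}_{C}}\mathcal{M}\big)=L^{j}\iota^{*}\iota_{*}(\mathcal{L}\otimes_{\mathcal{O}_{C}}\mathcal{M}),
\]
which is the asserted isomorphism. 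All the identifications are independent of the chosen flat resolution, so a construction made locally on $X$ patches to a global isomorphism.

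The point needing the most care is purely bookkeeping: checking that the $\mathcal{O}_{C}$-module structure on $L^{j}\iota^{*}\iota_{*}\mathcal{L}$ with respect to which we tensor by $\mathcal{M}$ is indeed the one carried by the representative $\iota^{*}\mathcal{P}_{\bullet}\otimes_{\mathcal{O}_{C}}\mathcal{L}$ (equivalently, the one induced from $\iota_{*}\mathcal{L}$), and that the identifications involving $\iota_{*}(\mathcal{L}\otimes_{\mathcal{O}_{C}}\mathcal{M})$ are compatible with everything in sight — all standard for a closed immersion but easy to garble. A lower-tech alternative that sidesteps the resolution entirely: the claim is local on $X$, and every open of $C$ has the form $C\cap V$ with $V\subseteq X$ open, so one may cover $X$ by opens $V$ on which $\mathcal{M}|_{C\cap V}$ is trivial; a trivialization of $\mathcal{M}$ over $C\cap V$ then gives the isomorphism via the ordinary projection formula $\iota_{*}(\mathcal{L}\otimes\iota^{*}\mathcal{O}_{V})\cong\iota_{*}\mathcal{L}\otimes\mathcal{O}_{V}$ together with the monoidality of $L\iota^{*}$, and one checks this local isomorphism is independent of the chosen trivialization — replacing it by another rescales source and target by the same unit of $\mathcal{O}_{C}^{*}(C\cap V)$ — so the local isomorphisms glue.
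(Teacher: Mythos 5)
Your proof is correct, and it takes a genuinely different route from the paper's. The paper constructs an explicit morphism by adjunction, via $\mathcal{M}\to\mathcal{H}\mathrm{om}(\mathcal{L},\mathcal{L}\otimes\mathcal{M})\to\mathcal{H}\mathrm{om}(L^{j}\iota^{*}\iota_{*}\mathcal{L},L^{j}\iota^{*}\iota_{*}(\mathcal{L}\otimes\mathcal{M}))$, and then verifies it is an isomorphism by passing to stalks and using that the nonempty fibres of a closed immersion are points. You instead never construct a map directly: you flat-resolve $\mathcal{O}_{C}$ over $\mathcal{O}_{X}$, appeal to the balancing of $\mathcal{T}or$ to represent $L\iota^{*}\iota_{*}\mathcal{L}$ by $\iota^{*}\mathcal{P}_{\bullet}\otimes_{\mathcal{O}_{C}}\mathcal{L}$ as a complex of $\mathcal{O}_{C}$-modules, and then use that tensoring with the flat sheaf $\mathcal{M}$ commutes with taking homology. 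In effect your argument exhibits both sides as $(L^{j}\iota^{*}\iota_{*}\mathcal{O}_{C})\otimes_{\mathcal{O}_{C}}\mathcal{L}\otimes_{\mathcal{O}_{C}}\mathcal{M}$, which is a slightly sharper (and arguably more transparent) statement. What the paper's approach buys is that the morphism is manifestly natural with no resolution bookkeeping; the cost is that ``the map on the stalk is $\mathcal{O}_{c}\to\mathcal{O}_{c}$'' compresses a bit of checking. What your approach buys is that the isomorphism and its independence of choices are structural, coming from standard facts about derived tensor products and flatness; the cost is the bookkeeping you flag yourself (that the $\mathcal{O}_{C}$-module structure used for $\otimes\mathcal{M}$ agrees with the one on the Tor representative, and that the balancing identification is canonical). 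Your low-tech alternative via local trivializations of $\mathcal{M}$ is also sound and is closest in spirit to the paper's stalkwise check. Both proofs use the closed-immersion hypothesis in an essential way: the paper via the structure of the fibres, you via surjectivity of $\mathcal{O}_{X}\to\iota_{*}\mathcal{O}_{C}$ to identify $\otimes_{\mathcal{O}_{X}}$ with $\otimes_{\mathcal{O}_{C}}$ on $\mathcal{O}_{C}$-modules.
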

		\begin{proof}
			The map is induced by adjunction via
			$$\mathcal{M}\to\mathcal{H}\text{om}(\mathcal{L},\mathcal{L}\otimes\mathcal{M})\to\mathcal{H}\text{om}(L^{j}\iota^{*}\iota_{*}\mathcal{L},L^{j}\iota^{*}\iota_{*}(\mathcal{L}\otimes\mathcal{M})) $$
			Since taking stalks commutes with pullbacks and the non-empty fibres of $i$ are points, the map on the stalk for $c\in C$ is $\mathcal{O}_{c}\to\mathcal{O}_{c}$, and since it is nonzero it is an isomorphism.  
		\end{proof}
		\begin{proposition}\label{normal-bundle}
			There is an exact sequence 
			$$0\to H^{1}(C,\mathcal{O}_{C})\to\textnormal{Ext}_{X}^{1}(\iota_{*}\mathcal{L},\iota_{*}\mathcal{L})\to H^{0}(C,\mathcal{N}_{X/C})\to 0$$
			where $\mathcal{N}_{X/C}$ is the normal bundle. 
		\end{proposition}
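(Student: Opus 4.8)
The plan is to identify $\mathrm{Ext}^1_X(\iota_*\mathcal L,\iota_*\mathcal L)$ with deformations of the pair $(C,\mathcal L)$ and then recognize the two ends of the sequence as line-bundle deformations and curve deformations respectively. Concretely, I would apply the local-to-global spectral sequence
$$E_2^{p,q}=H^p\bigl(X,\mathcal{E}xt^q_X(\iota_*\mathcal L,\iota_*\mathcal L)\bigr)\Rightarrow \mathrm{Ext}^{p+q}_X(\iota_*\mathcal L,\iota_*\mathcal L).$$
Since $\iota_*\mathcal L$ is supported on the curve $C$ (a divisor, hence locally a complete intersection of codimension $1$ in the surface $X$), the local Ext sheaves vanish above degree $1$, and one computes $\mathcal{E}xt^0_X(\iota_*\mathcal L,\iota_*\mathcal L)=\iota_*\mathcal O_C$ and $\mathcal{E}xt^1_X(\iota_*\mathcal L,\iota_*\mathcal L)=\iota_*\mathcal N_{X/C}$, where $\mathcal N_{X/C}=\iota^*\mathcal O_X(C)$ is the normal bundle (the $\mathcal L$ twists cancel, using the lemma just proved that $L^j\iota^*\iota_*\mathcal L\otimes\mathcal M\simeq L^j\iota^*\iota_*(\mathcal L\otimes\mathcal M)$, or simply that tensoring a line bundle on $C$ into both slots is harmless). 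So the spectral sequence degenerates to a five-term (here four-term) exact sequence
$$0\to H^1(C,\mathcal O_C)\to \mathrm{Ext}^1_X(\iota_*\mathcal L,\iota_*\mathcal L)\to H^0(C,\mathcal N_{X/C})\xrightarrow{\ d_2\ } H^2(C,\mathcal O_C)=0,$$
where the last term vanishes because $C$ is a curve. This is exactly the claimed sequence, with surjectivity on the right coming for free from $H^2(C,\mathcal O_C)=0$.

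The one genuine subtlety is computing the local Ext sheaves correctly when $C$ is singular (recall the unique curve in $|-3F|$ may have singularities, and more generally $\iota$ is only assumed to have possibly singular image). I would handle this by working with the closed immersion $\iota$ and the normal bundle $\mathcal N_{X/C}=\iota^*\mathcal O_X(C)$, which is a genuine line bundle on $C$ since $C\subset X$ is an effective Cartier divisor regardless of whether $C$ is smooth; the Koszul resolution $0\to\mathcal O_X(-C)\to\mathcal O_X\to\iota_*\mathcal O_C\to 0$ then computes $R\mathcal{H}om_X(\iota_*\mathcal O_C,\iota_*\mathcal L)$ and, after tensoring by $\iota_*\mathcal L^{-1}$ appropriately and using the lemma, gives $\mathcal{E}xt^0=\iota_*\mathcal O_C$, $\mathcal{E}xt^1=\iota_*\mathcal N_{X/C}$ and nothing higher. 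If $\iota$ is instead the normalization of such a curve (not a closed immersion), I would note that $\iota_*\mathcal L$ is still a pure one-dimensional sheaf on $X$ and run the same local computation, which changes $\mathcal{E}xt^0$ and $\mathcal{E}xt^1$ only by bounded torsion supported at the finitely many singular points; since in that case the statement should be read with $\mathcal N_{X/C}$ the appropriate normal sheaf, I would either restrict to the Cartier case or absorb the correction — but in the main application $C$ is taken smooth (see the setup of Section~\ref{section-two-cohomology-sheaves}), so the clean argument above suffices.

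The main obstacle I anticipate is bookkeeping the $\mathcal L$-twists through $R\mathcal{H}om$ and the adjunction $R\mathcal{H}om_X(\iota_*\mathcal L,\iota_*\mathcal L)\simeq R\mathcal{H}om_C(L\iota^*\iota_*\mathcal L,\mathcal L)$, i.e.\ making sure the normal bundle really appears untwisted; the preceding lemma is precisely the tool for this, so it is a matter of careful rewriting rather than new ideas. Everything else — the degeneration of the local-to-global spectral sequence into a four-term exact sequence and the vanishing of $H^2(C,\mathcal O_C)$ — is formal.
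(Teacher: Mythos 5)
Your proposal is correct, and it takes a genuinely different route from the paper's. You run the local-to-global Ext spectral sequence $E_2^{p,q}=H^p\bigl(X,\mathcal{E}xt^q_X(\iota_*\mathcal{L},\iota_*\mathcal{L})\bigr)\Rightarrow\textnormal{Ext}^{p+q}_X(\iota_*\mathcal{L},\iota_*\mathcal{L})$ on $X$, computing the local Ext sheaves via the Koszul resolution of $\iota_*\mathcal{O}_C$ and the preceding lemma to strip the $\mathcal{L}$-twist, and then read off the low-degree exact sequence, which closes up because $E_2^{2,0}=H^2(C,\mathcal{O}_C)=0$. The paper instead moves to $C$ immediately via the adjunction $R\mathcal{H}om_X(\iota_*\mathcal{L},\iota_*\mathcal{L})\simeq R\mathcal{H}om_C(L\iota^*\iota_*\mathcal{L},\mathcal{L})$ and applies the hyper-Ext (Grothendieck) spectral sequence on $C$, with $E_2^{p,q}=\textnormal{Ext}^p_C(L^q\iota^*\iota_*\mathcal{L},\mathcal{L})$; the same Koszul resolution identifies $L^1\iota^*\iota_*\mathcal{O}_C\cong\mathcal{N}_{X/C}^\vee$, and the sequence closes up because $\textnormal{Ext}^2_C=0$ on a curve. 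These are really the sheaf-level and the derived-pullback views of the same two-step computation, and both rely on exactly the same two inputs: the lemma cancelling the $\mathcal{L}$-twist and the Koszul resolution producing the normal bundle. Your buying points are that the local-to-global sequence is perhaps more standard in the deformation-theory literature and makes the geometric meaning (line-bundle deformations vs.\ embedded curve deformations) immediately visible in $E_2$; the paper's version is more in keeping with the adjunction-heavy style of the rest of the section. Your remark about $\iota$ failing to be a closed immersion is also well taken: the section's setup allows $\iota$ to be the normalization of a singular member of $|-nF|$, yet both the preceding lemma and the paper's proof quietly assume $\iota$ is a closed immersion, so your decision to restrict to the Cartier case matches what the paper actually proves.
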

		\begin{proof}
			Note that $\iota_{*}$ is exact since $\iota$ is a closed immersion. Consider the Grothendieck spectral sequence composing $\text{Ext}_{C}^{\bullet}(-,\mathcal{L})$ and $L\iota^{*}\iota_{*}$ applied to $\mathcal{L}$. Noting that $\text{Ext}_{C}^{2}=0$, we have the following exact sequence of low degrees:
			$$0\to\text{Ext}_{C}^{1}(\iota^{*}\iota_{*}\mathcal{L},\mathcal{L})\to 
			\text{Ext}_{C}^{1}(L\iota^{*}i_{*}\mathcal{L},\mathcal{L})\to
			\text{Ext}_{C}^{0}(L^{1}\iota^{*}\iota_{*}\mathcal{L},\mathcal{L})\to 0$$
			
			By the lemma, the first term is isomorphic to $\text{Ext}_{C}^{1}(\iota^{*}\iota_{*}\mathcal{O}_{C},\mathcal{O}_{C})$, and since $\iota^{*}\iota_{*}\mathcal{O}_{C}\cong\mathcal{O}_{C}$, it is isomorphic to $H^{1}(C,\mathcal{O}_{C})$. Similarly, the last term is isomorphic to $\text{Hom}_{C}^{0}(L^{1}\iota^{*}\iota_{*}\mathcal{O}_{C},\mathcal{O}_{C})$, and we have a flat resolution 
			$$0\to\mathcal{O}_{X}(-C)\to\mathcal{O}_{X}\to \iota_{*}\mathcal{O}_{C}\to 0$$
			So $L\iota^{*}\iota_{*}\mathcal{O}_{C}$ is quasi-isomorphic to $\iota^{*}\mathcal{O}_{X}(-C)\to \iota^{*}\mathcal{O}_{X}$. But this is the zero map, so $L^{1}\iota^{*}\iota_{*}\mathcal{O}_{C}\cong\mathcal{O}_{X}(-C)|_{C}\cong N_{X/C}^{\vee}$. And so the last term of the short exact sequence is $\text{Hom}_{C}(N_{X/C}^{\vee},\mathcal{O}_{C})\cong H^{0}(C,N_{X/C})$.
		\end{proof}	
		Thus, deformations of $\mathcal{G}=\iota_{*}\mathcal{L}$ arise from deformations of $\mathcal{L}$ and of $C$.
		
		\begin{remark}
			By Lemma \ref{unique}, there is a unique element of $C\in|-3F|$, so the moduli of all corresponding $\mathcal{P}$ comprises deformations of $\mathcal{L}$, which form an open subset of $\text{Pic}^{g-1}(C)$. Thus, if we could intrinsically characterize this $\mathcal{P}\in\mathcal{A}$ using only the triangulated category structure, we could then identify $\text{Pic}^{g-1}(C)$ up to birational equivalence, which is enough to identify $C$. This $C$ is geometrically rich, and could perhaps even uniquely identify $X$ among all blowups of $\mathbf{P}^{2}$ in 10 points. Indeed, $C$ has expected genus $10$, and the moduli of genus $10$ curves has dimension $27$, which is greater than the $20$ dimensions of moduli of $10$ points in $\mathbf{P}^{2}$.
			
			The problem of intrinsically identifying objects of $\mathcal{A}$ seems quite difficult. But observing that $\text{Hom}^{*}(\mathcal{P},\mathcal{P})$ is quite small, perhaps it could be shown that $\mathcal{P}$ minimizes $\text{Hom}^{*}(-,-)$ among all objects of the phantom in some precise way, which could lead to an intrinsic characterization. 
		\end{remark}

	\nocite{*}
	\footnotesize{\bibliographystyle{plain}\bibliography{phantom_objects}}

\end{document}